\newtheorem{proposition}{Proposition}[section]
\newtheorem{lemma}[proposition]{Lemma}
\newtheorem{corollary}[proposition]{Corollary}
\newtheorem{theorem}[proposition]{Theorem}
\theoremstyle{definition}
\newtheorem{definition}[proposition]{Definition}
\newtheorem{example}[proposition]{Example}
\theoremstyle{remark}
\newtheorem{remark}[proposition]{Remark}
\newtheorem{remarks}[proposition]{Remarks}
\newcommand{\thlabel}[1]{\label{th:#1}}
\newcommand{\thref}[1]{Theorem~\ref{th:#1}}
\newcommand{\selabel}[1]{\label{se:#1}}
\newcommand{\seref}[1]{Section~\ref{se:#1}}
\newcommand{\lelabel}[1]{\label{le:#1}}
\newcommand{\prlabel}[1]{\label{pr:#1}}
\newcommand{\prref}[1]{Proposition~\ref{pr:#1}}
\newcommand{\colabel}[1]{\label{co:#1}}
\newcommand{\coref}[1]{Corollary~\ref{co:#1}}
\newcommand{\exlabel}[1]{\label{ex:#1}}
\newcommand{\exref}[1]{Example~\ref{ex:#1}}
\newcommand{\delabel}[1]{\label{de:#1}}
\newcommand{\deref}[1]{Definition~\ref{de:#1}}
\newcommand{\eqlabel}[1]{\label{eq:#1}}
\newcommand{\equref}[1]{(\ref{eq:#1})}
\def\equal#1{\smash{\mathop{=}\limits^{#1}}}
\newcommand{\can}{{\rm can}}
\newcommand{\Hom}{{\rm Hom}}
\newcommand{\Ker}{{\rm Ker}}
\newcommand{\Coker}{{\rm Coker}\,}
\def\Sets{\underline{\rm Sets}}
\def\Cog{\underline{\Cc}}
\def\Alg{\underline{\Aa}}
\def\Cat{\dul{\rm Cat}}
\def\gr{\dul{\rm gr}}
\def\lan{\langle}
\def\ran{\rangle}
\def\ot{\otimes}
\newcommand{\ev}{{\rm ev}}
\def\sq{\square}
\def\rightact{\hbox{$\leftharpoonup$}}
\newcommand{\Aa}{\mathcal{A}}
\newcommand{\Cc}{\mathcal{C}}
\newcommand{\Dd}{\mathcal{D}}
\newcommand{\Mm}{\mathcal{M}}
\newcommand{\Vv}{\mathcal{V}}
\newcommand{\Ww}{\mathcal{W}}
\def\*C{{}^*\hspace*{-1pt}{\Cc}}
\def\text#1{{\rm {\rm #1}}}
\def\ol{\overline}
\def\ul{\underline}
\def\dul#1{\underline{\underline{#1}}}
\begin{document}
\title[Hopf categories]{Hopf categories}
\author[E. Batista]{E. Batista}
\address{Departamento de Matem\'atica, Universidade Federal de Santa Catarina, Brazil}
\email{ebatista@mtm.ufsc.br}
\author{S. Caenepeel}
\address{Faculty of Engineering,
Vrije Universiteit Brussel, Pleinlaan 2, B-1050 Brussels, Belgium}
\email{scaenepe@vub.ac.be}
\author[J. Vercruysse]{J. Vercruysse}
\address{D\'epartement de Math\'ematique, Universit\'e Libre de Bruxelles, Belgium}
\email{jvercruy@ulb.ac.be}

\subjclass[2010]{16T05}

\keywords{Enriched category, Hopf group coalgebra, weak Hopf algebra, duoidal category,
Galois coobject, Morita context, fundamental theorem}

\thanks{\rm 
The second author was supported by the research project G.0117.10  
``Equivariant Brauer groups and Galois deformations'' from
FWO-Vlaanderen. The third author wants to thank the FWB (F\'ed\'eration Wallonie-Bruxelles)
for the support on the ARC-project ``Hopf algebras and symmetries of non-commutative spacesÕ'.}

\begin{abstract}
We introduce Hopf categories enriched over braided monoidal categories. The notion is
linked to several recently developed notions in Hopf algebra theory, such as Hopf group
(co)algebras, weak Hopf algebras and duoidal categories. We generalize the fundamental
theorem for Hopf modules and some of its applications to Hopf categories.
\end{abstract}
\maketitle

\section*{Introduction}\selabel{0}
The starting point of this paper is enriched category theory. Given a (strict) monoidal
category $\Vv$, we can consider the notion of $\Vv$-category. For example, if
$\Vv$ is the category of sets, then a $\Vv$-category is an ordinary category.
If $\Vv$ is the category of vector spaces, then a $\Vv$-category is a linear category.
A $\Vv$-category with one object is an algebra (or monoid) in $\Vv$.\\
Now consider a braided monoidal category. The category $\Cog(\Vv)$ of coalgebras in
$\Vv$ is a monoidal category, so we can consider $\Cog(\Vv)$-categories. 
A Hopf $\Vv$-category is a $\Cog(\Vv)$-category with an antipode. These definitions
are designed in such a way that $\Cog(\Vv)$-categories, resp. Hopf $\Vv$-categories,
with one object correspond to bialgebras, resp. Hopf algebras in $\Vv$.
In the world of sets, the notion is not of great interest, since $\Cog(\Sets)=\Sets$:
it is well-known that every set has a unique structure of a coalgebra in $\Sets$.
Hopf categories are groupoids, that is, categories
in which every morphism is invertible. In fact, $\Cog(\Vv)$-categories only
come to life when we pass to the $k$-linear world!\\
Hopf categories are related to several recent generalizations of Hopf algebras and
monoidal categories. For example, Hopf group algebras and Hopf group coalgebras
give rise to examples of Hopf categories, respectively over the
category of vector spaces and its dual category, see \seref{4}. In \seref{6} we will 
show that $k$-linear Hopf categories with a set of objects
are Hopf monoids in the sense of \cite{BL}
(in particular bimonoids in the sense of \cite{Aguiar,BCZ}) in a suitable duoidal category. This also indicates the relation with other generalized Hopf-like structures, such as Hopf monads \cite{BLV}.\\
Hopf categories with a finite number of objects can be used to construct examples of weak Hopf algebras,
see \seref{5}. As we have mentioned above, groupoids are Hopf categories over sets. Applying
the linearization functor, we obtain a Hopf category over the category of vector spaces,
Putting this into packed form, we obtain a weak Hopf algebra, which turns out to be the groupoid
algebra, the basic example of a weak Hopf algebra.\\
This brings us to duality. The second author made attempts to construct a satisfactory
duality theory for group algebras, based on the philosophy developed in \cite{CL}. For
Hopf categories, duality works. The dual of a (finite) Hopf $\Mm_k$-category (also termed a
$k$-linear Hopf category) is a Hopf $\Mm_k^{\rm op}$-category, see Theorems \ref{th:3b.3} and \ref{th:3b.4}.
We also have a categorical version of the well-known property that $C$-comodules correspond
to $C^*$-modules, in the case where $C$ is a finitely generated projective coalgebra, see
\prref{3b.5}.\\
It also turns out that some well-known results about Hopf algebras can be generalized
to Hopf categories. We mention a few first results. We have a categorical version of the
important fact that the representation category of a bialgebra carries a monoidal structure,
see \seref{3}. The fundamental theorem extends to Hopf categories, see \seref{9}.\\
It is well-known that Morita contexts can be viewed as $k$-linear categories with two
objects. This is the starting point of \seref{6b}, where the relationship between
Hopf categories, $H$-Galois objects and Morita theory is investigated.
It is possible to develop descent and Galois theory for Hopf categories, this is the
topic of a forthcoming paper. Hopf categories are also related to partial
actions of groups and Hopf algebras (see \cite{ABV,D,DE,KL}), this will be investigated in \cite{BCV2}.

\section{Preliminary results on enriched category theory}\selabel{1}
Let $(\Vv,\ot,k)$ be a monoidal category. We will assume that $\Vv$
is strict. Our results extend easily to arbitrary monoidal categories, in view of the classical
result that every monoidal category is equivalent to a strict one, see for example \cite{K}.
For a class $X$, we construct a new monoidal
category $\Vv(X)$. An object is a family of objects $M$ in $\Vv$ indexed by $X\times X$:
$$M=(M_{x,y})_{x,y\in X}.$$
A morphism $\varphi:\ M\to N$ consists of a family of morphisms $\varphi_{x,y}:\ M_{x,y}\to N_{x,y}$
in $\Vv$, indexed by $X\times X$. The tensor product $M\bullet N$ is defined by the formula
$$(M\bullet N)_{x,y}=M_{x,y} \ot N_{x,y},$$
and the unit object is $J$, with $J_{x,y}=k$, for all $x,y\in X$. To make our notation more transparent,
we will write $J_{x,y}=ke_{x,y}$, where $e_{x,y}$ can be viewed as an elementary matrix.

We have a functor $(-)^{\rm op}:\ \Vv(X)\to \Vv(X)$. The opposite $V^{\rm op}$ of an object $V\in \Vv(X)$
is given by $V^{\rm op}_{y,x}=V_{x,y}$, for all $x,y\in X$, and the opposite  $\varphi^{\rm op}$ of a
morphism $\varphi$ is given by $\varphi^{\rm op}_{y,x}=\varphi_{x,y}$.

From \cite[Sec. 6.2]{B}, we recall the notion of a
$\Vv$-category. A $\Vv$-category $A$ consists of a class $|A|=X$, and an object $A\in \Vv(X)$
together with two classes of morphisms in $\Vv$, namely,
\begin{enumerate}
\item the multiplication morphisms $m=m_{x,y,z}:\ A_{x,y}\ot A_{y,z}\to A_{x,z}$, defined for each $x,y,z\in X$;
\item unit morphisms $\eta_x:\ J_{x,x}=ke_{x,x}\to A_{x,x}$, defined for each $x\in X$,
\end{enumerate}
such that the following associativity and unit conditions are satisfied:
\begin{eqnarray}
&&m_{x,y,t}\circ (A_{x,y}\ot m_{y,z,t})=m_{x,z,t}\circ (m_{x,y,z}\ot A_{z,t})=m^2_{x,y,z,t};\eqlabel{1.1}\\
&&m_{x,x,y}\circ (\eta_x\ot A_{x,y})=A_{x,y}=m_{x,y,y}\circ ( A_{x,y}\ot \eta_y)\eqlabel{1.2}.
\end{eqnarray}
Observe that $J$ is a $\Vv$-category; the multiplication maps $ke_{x,y}\ot ke_{y,z}\to ke_{x,z}$
and the unit maps $ke_{x,x}\to ke_{x,x}$ are all the identity maps.

If $(\Vv,\ot,k)=(\Sets,\times,\{*\})$, then a $\Vv$-category is an ordinary category. Indeed, for
a $\Sets$-category $A$ with underlying class $X$, set $\Hom_A(x,y)=A_{y,x}$. For
$a\in \Hom_A(x,y)=A_{y,x}$ and $b\in \Hom_A(y,z)=A_{z,y}$, we define the composition
$b\circ a=m_{z,y,x}(b, a)$. The unit morphism in $\Hom_A(x,x)=A_{x,x}$ is
$\eta_x(*)$.

If $(\Vv,\ot,k)=(\Mm_k,\ot,k)$, the category of modules over a commutative ring $k$, then
a $\Vv$-category is also called a $k$-linear category.

If $(\Vv,\ot,k,c)$ is a braided monoidal category, then the tensor product $A\bullet B$ in $\Vv(X)$ of two
$\Vv$-categories $A$ and $B$ is again a $\Vv$-category: the multiplication morphisms are
the compositions
\begin{eqnarray*}
&&\hspace*{-6mm}m^{A\bullet B}_{x,y,z}=
(m_{x,y,z}\ot m_{x,y,z})\circ (A_{x,y}\ot c_{B_{x,y},A_{y,z}}\ot B_{y,z}):\\
&&A_{x,y}\ot B_{x,y}\ot A_{y,z}\ot  B_{y,z}\to A_{x,y}\ot A_{y,z}\ot B_{x,y} \ot  B_{y,z}
\to A_{x,z}\ot B_{x,z}.
\end{eqnarray*}

$\Vv$-categories can be organized into a 2-category ${}_{\Vv}\dul{\rm Cat}$.\\
Let $A$ and $B$ be $\Vv$-categories, with underlying classes $|A|=X$ and $|B|=Y$. 
A  $\Vv$-functor $f:\ A\to B$
consists of the following data: for each $x\in X$, we have $f(x)\in Y$, and we have morphisms
$$f_{x,y}:\ A_{x,y}\to B_{f(x),f(y)}$$
in $\Vv$ such that the following diagrams commute, for all $x,y,z\in X$:
\begin{equation}\eqlabel{Vfunctor}
\xymatrix{
A_{x,y}\ot A_{y,z}\ar[d]_{f_{x,y}\ot f_{y,z}}\ar[rr]^(.6){m_{x,y,z}}&&A_{x,z}\ar[d]^{f_{x,z}}\\
B_{f(x),f(y)}\ot B_{f(y),f(z)}\ar[rr]^(.6){m_{f(x),f(y),f(z)}}&&B_{f(x),f(z)}}\hspace*{5mm}
\xymatrix{ke_{x,x}\ar[dr]_{\eta_{f(x)}}\ar[r]^{\eta_x}&A_{x,x}\ar[d]^{f_{x,x}}\\
&B_{f(x),f(x)}}
\end{equation}
Let $f,g:\ A\to B$ be $\Vv$-functors. A $\Vv$-natural transformation $\alpha:\ f\Rightarrow g$
consists of a class of morphisms $\alpha_x:\ k\to B_{g(x),f(x)}$ in $\Vv$ such that the diagrams
$$\xymatrix{
A_{x,y}\ar[rr]^(.4){g_{x,y}\ot \alpha_y}\ar[d]_{\alpha_x\ot f_{x,y}}&&
B_{g(x),g(y)}\ot B_{g(y),f(y)}\ar[d]^{m_{g(x),g(y),f(y)}}\\
B_{g(x),f(x)}\ot B_{f(x),f(y)}\ar[rr]^(.6){m_{g(x),f(x),f(y)}}&&B_{g(x),f(y)}}$$
commute, for all $x,y\in X$. We have a 2-category ${}_{\Vv}\dul{\rm Cat}$ with $\Vv$-categories,
$\Vv$-functors and $\Vv$-natural transformation as 0-cells, 1-cells and 2-cells. Let us
describe the composition of 1-cells and 2-cells. Given 1-cells $f,~f':\ A\to B$ and $g,~g':\ B\to C$,
$g\circ f:\ A\to C$ is given by the formulas
$$(g\circ f)_{x,y}= g_{f(x),f(y)}\circ f_{x,y}:\ A_{x,y}\to C_{(g\circ f)(x),(g\circ f)(y)}.$$
Now consider 2-cells $\alpha:\ f\Rightarrow f'$ and $\beta:\ g\Rightarrow g'$.
$\alpha*\beta:\ g\circ f\Rightarrow g'\circ f'$ is defined as follows:
\begin{eqnarray*}
(\alpha*\beta)_{x} & = & m_{g'(f'(x)),g'(f(x)),g(f(x))}\circ ((g'_{f'(x),f(x)}\circ\alpha_x)\ot
\beta_{f(x)}) \\
& = & m_{g'(f'(x)),g(f'(x)),g(f(x))}\circ (\beta_{f'(x)} \otimes (g_{f'(x),f(x)} \circ \alpha_x ))
\end{eqnarray*}
Now let $f,g,h:\ A\to B$ be 1-cells, and let $\alpha:\ f\Rightarrow g$, $\beta:\ g\Rightarrow h$
be 2-cells. We define the vertical decomposition $\beta\circ \alpha:\ f\Rightarrow h$ by the rule
$$(\beta\circ \alpha)_x=m_{h(x),g(x),f(x)}\circ (\beta_x\ot \alpha_x).$$

Now fix a class $X$. A $\Vv$-category with underlying class $X$ is called a $\Vv$-$X$-category.
A $\Vv$-functor $f:\ A\to B$ between two $\Vv$-$X$-categories $A$ and $B$ is called a
$\Vv$-$X$-functor if $f(x)=x$ for all $x\in X$, that is, $f$ is the identity on objects.
${}_\Vv\Cat(X)$ is the 2-subcategory of ${}_\Vv\Cat$ with $\Vv$-$X$-categories as 0-cells,
$\Vv$-$X$-functors as 1-cells and $\Vv$-natural transformations as 2-cells.

If $X$ is a singleton, then the 0-cells and 1-cells of ${}_\Vv\Cat(X)$ are $\Vv$-algebras and
$\Vv$-algebra morphisms. A 2-cell $\alpha:\ f\Rightarrow g$ between two algebra morphisms
$f,g:\ A\to B$ is a morphism $\alpha:\ k\to B$ such that $m\circ (g\ot\alpha)=m\circ (\alpha\ot f)$.

Consider the particular situation where $\Vv=\Mm_k$. Then morphisms $\alpha_x:\
k\to B_{x,x}$ correspond to elements $\alpha_x\in B_{x,x}$, and a 2-cell $\alpha:\ f\Rightarrow
g$ between
two $k$-linear $X$-functors consists of elements $\alpha_x\in B_{x,x}$ such that
\begin{equation}\eqlabel{1.3}
g_{x,y}(a)\alpha_y =\alpha_x f_{x,y}(a),
\end{equation}
for all $a\in A_{x,y}$ and $x,y\in X$.

Let $(\Vv,\ot, k)$ and $(\Ww,\sq,l)$ be two strict monoidal categories. Recall that a monoidal
functor $\Vv\to\Ww$ is a triple $(F,\varphi_0,\varphi_2)$, where $F:\ \Vv\to \Ww$ is
a functor, $\varphi_0:\ l\to F(k)$ is a morphism in $\Ww$, and $\varphi_2:\ F\sq F\Rightarrow
F\circ \ot$ is a natural transformation, satisfying certain properties, we refer to \cite[XI.4]{K}
for detail. A monoidal functor is called strong if $\varphi_0$ and $\varphi_2$ are isomorphisms.

\begin{proposition}\prlabel{1.1}
A monoidal functor  $F:\ \Vv\to \Ww$  induces a bifunctor $F:\ {}_\Vv\Cat\to {}_\Ww\Cat$.
If $F$ is a strong monoidal equivalence of categories, then the induced bifunctor is a
biequivalence.
\end{proposition}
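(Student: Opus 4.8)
The plan is to construct the induced bifunctor explicitly and then, for the second statement, to produce a quasi-inverse from a monoidal quasi-inverse of $F$.

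First I would define $F$ on all three types of cells. Write $F=(F,\varphi_0,\varphi_2)$. Given a $\Vv$-category $A$ with underlying class $X$, let $F(A)$ be the $\Ww$-category on the same class $X$ with $F(A)_{x,y}=F(A_{x,y})$, with multiplication
$$
F(m_{x,y,z})\circ\varphi_{2,A_{x,y},A_{y,z}}:\ F(A_{x,y})\sq F(A_{y,z})\to F(A_{x,y}\ot A_{y,z})\to F(A_{x,z}),
$$
and with unit $F(\eta_x)\circ\varphi_0:\ l\to F(k)\to F(A_{x,x})$. The associativity condition \equref{1.1} for $F(A)$ then follows from \equref{1.1} for $A$ together with the functoriality of $F$, the naturality of $\varphi_2$ and the associativity coherence axiom of the monoidal functor; the unit condition \equref{1.2} follows similarly from the unit coherence axioms. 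On a $\Vv$-functor $f$ I would set $F(f)(x)=f(x)$ and $F(f)_{x,y}=F(f_{x,y})$, and the two defining diagrams of a $\Ww$-functor commute by functoriality of $F$ and naturality of $\varphi_2,\varphi_0$. On a $\Vv$-natural transformation $\alpha$ I would set $F(\alpha)_x=F(\alpha_x)\circ\varphi_0:\ l\to F(k)\to F(B_{g(x),f(x)})$, the defining hexagon being obtained by applying $F$ and inserting $\varphi_2$. That $F$ respects composition of $1$-cells is immediate from functoriality of $F$; compatibility with the horizontal and vertical compositions of $2$-cells is a diagram chase using naturality of $\varphi_2$ and $\varphi_0$.

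For the biequivalence I would first invoke the standard fact that a strong monoidal functor which is an equivalence admits a strong monoidal quasi-inverse $G=(G,\psi_0,\psi_2):\ \Ww\to\Vv$, together with \emph{monoidal} natural isomorphisms $\eta:\ \Id_\Vv\Rightarrow G\circ F$ and $\varepsilon:\ F\circ G\Rightarrow\Id_\Ww$. By the first part, $G$ induces a bifunctor $G:\ {}_\Ww\Cat\to{}_\Vv\Cat$, and since the construction of the first part respects composition of monoidal functors, the composite bifunctor $G\circ F$ coincides with the bifunctor induced by the strong monoidal functor $G\circ F$, and likewise for $F\circ G$. I would then upgrade $\eta$ and $\varepsilon$ to the enriched level: for a $\Vv$-category $A$ with class $X$, the components $\eta_{A_{x,y}}:\ A_{x,y}\to (G\circ F)(A_{x,y})=(GF(A))_{x,y}$ assemble into a $\Vv$-$X$-functor $\Theta_A:\ A\to GF(A)$, the identity on objects; commutativity with multiplication and units is exactly the monoidality of $\eta$, which transports $\varphi_2,\varphi_0$ to the structure maps of $G\circ F$. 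As each $\eta_{A_{x,y}}$ is invertible, $\Theta_A$ is an isomorphism of $\Vv$-categories, and naturality of $\eta$ shows, across $\Vv$-functors and $\Vv$-natural transformations, that $(\Theta_A)_A$ is a $2$-natural isomorphism $\Id_{{}_\Vv\Cat}\Rightarrow G\circ F$. Running the same argument with $\varepsilon$ yields $F\circ G\Rightarrow\Id_{{}_\Ww\Cat}$, and these two $2$-natural isomorphisms exhibit $F$ and $G$ as inverse biequivalences.

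The content of the first paragraph is essentially routine coherence bookkeeping. The one point needing genuine care is the claim that $\Theta_A$ is a $\Vv$-functor, that is, that the monoidal naturality of $\eta$ translates precisely into compatibility of $(\eta_{A_{x,y}})_{x,y}$ with the \emph{induced} multiplications and units of $A$ and $GF(A)$; this is where the monoidality of the quasi-inverse and of $\eta,\varepsilon$ is actually consumed, and I expect it to be the main obstacle. Once it is verified, essential surjectivity on $0$-cells and local full faithfulness of $F$ all follow formally from the existence of the $2$-natural isomorphisms $\Theta$ and its $\Ww$-analogue.
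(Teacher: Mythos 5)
Your construction of the induced bifunctor on $0$-, $1$- and $2$-cells is exactly the one in the paper's (sketched) proof, with the same structure maps $F(m_{x,y,z})\circ\varphi_2$, $F(\eta_x)\circ\varphi_0$, $F(f_{x,y})$ and $F(\alpha_x)\circ\varphi_0$. The paper leaves the biequivalence part entirely to the reader, and your argument via a strong monoidal quasi-inverse $G$ with monoidal natural isomorphisms, upgraded to $2$-natural isomorphisms $\Theta$ between the composite bifunctors and the identities, is a correct way to fill that gap.
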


\begin{proof} (sketch).
Let $A$ be a $\Vv$-category, and define $F(A)$ as follows: $F(A)_{x,y}=F(A_{x,y})$. The multiplication
and unit maps are given by the formulas
\begin{eqnarray*}
&&\hspace*{-2cm}
m'_{x,y,z}=F(m_{x,y,z})\circ \varphi_2(A_{x,y},A_{y,z})\\
&:& F(A_{x,y})\ot F(A_{y,z})\to F(A_{x,y}\ot A_{y,z})
\to F(A_{x,z});\\
&&\hspace*{-2cm}\eta'_x=F(\eta_x)\circ\varphi_0:\ l\to F(k)\to F(A_{x,x}).
\end{eqnarray*}
It is straightforward to show that $F(A)$ is a $\Ww$-category.\\
Now let $f:\ A\to B$ be a $\Vv$-functor. $F(f):\ F(A)\to F(B)$ is given by the data
$$F(f)_{x,y}=F(f_{x,y}):\ F(A_{x,y})\to F(B_{f(x),f(y)}).$$
We leave it to the reader to show that $F(f)$ is a $\Ww$-functor.\\
Let $f,g:\ A\to B$ be $\Vv$-functors, and let $\alpha:\ f\rightarrow g$ be a $\Vv$-natural transformation.
$F(\alpha)$ is defined as follows.
$$F(\alpha)_x= F(\alpha_x)\circ \varphi_0:\ l\to F(k)\to F(B_{g(x),f(x)}).$$
$F(\alpha)$ is a $\Ww$-natural transformation, and $F:\ {}_\Vv\Cat\to {}_\Ww\Cat$ is a bifunctor.
Further details are left to the reader.
\end{proof}

Let $\Vv=(\Vv,\ot,k)$ be a monoidal category, and consider its opposite
$\Vv^{\rm op}=(\Vv^{\rm op},\ot^{\rm op},k)$. For later use, we provide a brief description of
$\Vv^{\rm op}$-categories. A $\Vv^{\rm op}$-category consists of a class $X$, $A\in \Vv(X)$
and a collection of morphisms
$$m_{x,y,z}:\ A_{x,z}\to A_{y,z}\ot A_{x,y}~~;~~\eta_x:\ A_{x,x}\to k$$
in $\Vv$. A $\Vv^{\rm op}$-functor $f:\ A\to B$ consists of
$f:\ X\to Y$ together with morphisms $f_{x,y}:\ B_{f(x),f(y)}\to A_{x,y}$ in $\Vv$.
A $\Vv^{\rm op}$-natural transformation $\alpha:\ f\Rightarrow g$ consists of a collection of
morphisms $\alpha_x:\ B_{g(x),f(x)}\to k$ in $\Vv$. We leave it to the reader to formulate all
the necessary axioms that have to be satisfied.

\section{Hopf categories}\selabel{2}
Let $\Vv$ be a strict braided monoidal category, and consider $\Cog(\Vv)$, the category of
coalgebras (or comonoids) and coalgebra morphisms in $\Vv$. $\Cog(\Vv)$ is again a
monoidal category: the tensor product of two coalgebras, resp. two coalgebra morphisms is again
a coalgebra (resp. a coalgebra morphism), and the unit object $k$ of $\Vv$ is a coalgebra.

Now we can consider $\Cog(\Vv)$-categories, that is, categories enriched in $\Cog(\Vv)$.
According to the definitions in \seref{1}, a $\Cog(\Vv)$-category $A$ consists of a class
$|A|=X$, and coalgebras $A_{x,y}$, for all $x,y\in X$, together with
coalgebra morphisms $m_{x,y,z}:\ A_{x,y}\ot A_{y,z}\to A_{x,z}$ and $\eta_x:\ J_{x,x}=ke_{x,x}\to
A_{x,x}$ satisfying (\ref{eq:1.1}-\ref{eq:1.2}).

The definition of a $\Cog(\Vv)$-category can
be restated. Before we do this, we first make the elementary observation that a
coalgebra in $\Vv(X)$ is an object $C\in \Vv(X)$, together with families of morphisms
$\Delta_{x,y}:\ C_{x,y}\to C_{x,y}\ot C_{x,y}$ and $ \varepsilon_{x,y}:\ C_{x,y}\to J_{x,y}=ke_{x,y}$
such that $(C_{x,y},\Delta_{x,y},\varepsilon_{x,y})$ is a coalgebra in $\Vv$, for all $x,y\in X$.
A coalgebra morphism between two coalgebras $C$ and $D$ in $\Vv(X)$ is a morphism
$f:\ C\to D$ in $\Vv(X)$ such that $f_{x,y}$ is a coalgebra map, for all $x,y\in X$.

\begin{proposition}\delabel{2.1}
Let $X$ be a class and let $\Vv$ be a strict braided monoidal category. A $\Cog(\Vv)$-category with underlying
class $X$ is an object in $\Vv(X)$ which has the structure of $\Vv$-category and of a coalgebra in $\Vv(X)$
such that the morphisms $\Delta_{x,y}$ and $\varepsilon_{x,y}$ define $\Vv$-$X$-functors
$\Delta:\ A\to A\bullet A$ and $\varepsilon:\ A\to J$.
\end{proposition}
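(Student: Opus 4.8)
The plan is to observe that the two descriptions involve literally the same data, so that the entire content of the statement is the equivalence of the two bundles of axioms. A $\Cog(\Vv)$-category with underlying class $X$ supplies an object $A\in\Vv(X)$ together with a coalgebra structure $(\Delta_{x,y},\varepsilon_{x,y})$ on each $A_{x,y}$ (hence a coalgebra in $\Vv(X)$), and morphisms $m_{x,y,z}$ and $\eta_x$ in $\Vv$ satisfying \equref{1.1}--\equref{1.2}; forgetting that $m_{x,y,z}$ and $\eta_x$ are coalgebra maps, this is exactly a $\Vv$-category structure on $A$. Conversely, the restated data are visibly of the same shape. Thus it remains only to prove that \emph{$m_{x,y,z}$ and $\eta_x$ are coalgebra morphisms in $\Vv$} is equivalent to \emph{$\Delta$ and $\varepsilon$ are $\Vv$-$X$-functors}.

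First I would unwind the requirement that $m_{x,y,z}\colon A_{x,y}\ot A_{y,z}\to A_{x,z}$ be a coalgebra morphism. Here $A_{x,y}\ot A_{y,z}$ carries the braided tensor product coalgebra structure, with comultiplication $(A_{x,y}\ot c_{A_{x,y},A_{y,z}}\ot A_{y,z})\circ(\Delta_{x,y}\ot\Delta_{y,z})$ and counit $\varepsilon_{x,y}\ot\varepsilon_{y,z}$. Compatibility of $m_{x,y,z}$ with comultiplication reads
$$\Delta_{x,z}\circ m_{x,y,z}=(m_{x,y,z}\ot m_{x,y,z})\circ(A_{x,y}\ot c_{A_{x,y},A_{y,z}}\ot A_{y,z})\circ(\Delta_{x,y}\ot\Delta_{y,z}).$$
The right-hand side is precisely $m^{A\bullet A}_{x,y,z}\circ(\Delta_{x,y}\ot\Delta_{y,z})$, by the formula for the multiplication of the tensor product $\Vv$-category $A\bullet A$ recalled in \seref{1}; hence this identity is exactly the commutativity of the multiplication square expressing that $\Delta\colon A\to A\bullet A$ is a $\Vv$-functor. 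Similarly, compatibility of $m_{x,y,z}$ with the counit reads $\varepsilon_{x,z}\circ m_{x,y,z}=\varepsilon_{x,y}\ot\varepsilon_{y,z}$, which is the multiplication square for $\varepsilon\colon A\to J$, the multiplication of $J$ being the canonical identification $ke_{x,y}\ot ke_{y,z}\cong ke_{x,z}$.

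Next I would treat the units analogously. That $\eta_x\colon ke_{x,x}\to A_{x,x}$ is a coalgebra morphism amounts to $\Delta_{x,x}\circ\eta_x=\eta_x\ot\eta_x$ and $\varepsilon_{x,x}\circ\eta_x=\id_{ke_{x,x}}$, using that the unit coalgebra $ke_{x,x}=k$ carries the trivial structure, so that its comultiplication and counit are identities ($\Vv$ being strict). The first equation is the unit triangle for $\Delta$, the unit of the $\Vv$-category $A\bullet A$ at $x$ being $\eta_x\ot\eta_x$; the second is the unit triangle for $\varepsilon$, the unit of $J$ at $x$ being $\id_{ke_{x,x}}$. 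Reading all four equivalences in the reverse direction yields the converse.

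The only delicate point is the bookkeeping with the braiding: one must check that the comultiplication of the tensor product coalgebra $A_{x,y}\ot A_{y,z}$, as dictated by the monoidal structure of $\Cog(\Vv)$, inserts the braiding $c_{A_{x,y},A_{y,z}}$ in exactly the position where it occurs in the multiplication of $A\bullet A$ viewed as a tensor product of $\Vv$-categories. Since both are built from the same braiding $c$ of $\Vv$ acting on the middle two tensor factors, the two expressions agree on the nose, and the coalgebra-morphism axioms for $m$ and $\eta$ become literally the functoriality diagrams for $\Delta$ and $\varepsilon$. Everything else is a routine diagram chase.
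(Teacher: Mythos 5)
Your proposal is correct and follows essentially the same route as the paper: both identify the same four commuting diagrams (compatibility of $m$ with $\Delta$ and with $\varepsilon$, and of $\eta$ with $\Delta$ and with $\varepsilon$) and observe that grouping them one way gives ``$m$ and $\eta$ are coalgebra maps'' while grouping them the other way gives ``$\Delta$ and $\varepsilon$ are $\Vv$-$X$-functors.'' Your explicit check that the braiding sits in the same position in the tensor-product coalgebra structure and in the multiplication of $A\bullet A$ is exactly the point the paper leaves implicit in diagram \equref{2.1.1}.
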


\begin{proof}
Assume that $A$ is a $\Vv$-category and a coalgebra in $\Vv(X)$, and consider the following diagrams in $\Vv$.
\begin{equation}\eqlabel{2.1.1}
\xymatrix{
A_{x,y}\ot A_{y,z}\ar[d]_{\Delta_{x,y}\ot\Delta_{y,z}}\ar[rr]^{m_{x,y,z}}&&A_{x,z}\ar[d]^{\Delta_{x,z}}\\
A_{x,y}\ot A_{x,y}\ot A_{y,z}\ot A_{y,z}
\ar[rr]^(.60){m^{A\bullet A}_{x,y,z}}&&
A_{x,z}\ot A_{x,z}},
\end{equation}
\begin{equation}\eqlabel{2.1.2}
\xymatrix{ke_{x,x}\ar[drr]_{\eta_x\ot \eta_x}\ar[rr]^{\eta_x}&&A_{x,x}\ar[d]^{\Delta_{x,x}}\\
&&A_{x,x}\ot A_{x,x}},
\end{equation}
\begin{equation}\eqlabel{2.1.3}
\xymatrix{
A_{x,y}\ot A_{y,z}\ar[d]_{\varepsilon_{x,y}\ot\varepsilon_{y,z}}\ar[rr]^{m_{x,y,z}}&&
A_{x,z}\ar[d]^{\varepsilon_{x,z}}\\
ke_{x,y}\ot ke_{y,z}\ar[rr]^{=}&&ke_{x,z}},
\end{equation}
and
\begin{equation}\eqlabel{2.1.4}
\xymatrix{ke_{x,x}\ar[rr]^{\eta_x}\ar[drr]_{=}&&A_{x,x}\ar[d]^{\varepsilon_{x,x}}\\ &&ke_{x,x}}.
\end{equation}
$\Delta$ is a $\Vv$-$X$-functor if and only if the diagrams (\ref{eq:2.1.1}) and (\ref{eq:2.1.2}) commute,
for all $x,y,z\in X$.
$\varepsilon$ is a $\Vv$-$X$-functor if and only if the diagrams \equref{2.1.3} and \equref{2.1.4}
commute, for all $x,y,z\in X$. $m_{x,y,z}$ is a coalgebra map if and only if \equref{2.1.1} and \equref{2.1.3}
commute, and $\eta_x$ is a coalgebra map if and only if \equref{2.1.2} and \equref{2.1.4}
commute.
\end{proof}

Observe that $\Cog(\Vv)$-categories with one object correspond to bialgebras in $\Vv$.
It follows from the results in \seref{1} that $\Cog(\Vv)$-categories can be organized into a 2-category
${}_{\Cog(\Vv)}\dul{\rm Cat}$.
In particular, a $\Cog(\Vv)$-functor between two $\Cog(\Vv)$-categories $A$ and $B$ is a
$\Vv$-functor $f:\ A\to B$ such that every $f_{x,y}:\ A_{x,y}\to B_{x,y}$ is a morphism of coalgebras.
For a fixed class $X$, $\Cog(\Vv)$-categories with underlying class $X$ can be organized into
a 2-category ${}_{\Cog(\Vv)}\dul{\rm Cat}(X)$. A $\Cog(\Vv)$-natural transformation
between two $\Cog(\Vv)$-functors $f,~g:\ A\to B$ consists of grouplike elements $\alpha_x\in
B_{x,x}$ satisfying \equref{1.3}.\\

Let $A$ be a $\Vv$-category, and consider its opposite $A^{\rm op}$ in $\Vv(X)$. $A^{\rm op}$
is also a $\Vv$-category, with multiplication morphisms
$$m^{\rm op}_{x,y,z}=m_{z,y,x}\circ c_{A_{y,x},A_{x,y}}:\
A^{\rm op}_{x,y}\ot A^{\rm op}_{y,z}=A_{y,x}\ot A_{z,y}\to A^{\rm op}_{x,z}=A_{z,x}$$
and unit morphisms $\eta^{\rm op}_x=\eta_x$. Observe that we need the inverse braiding here,
compare to \cite[1.3]{Takeuchi}.\\

Let $C$ be a coalgebra in $\Vv(X)$. The coopposite coalgebra $C^{\rm cop}$ is equal to
$C$ as an object of $\Vv(X)$, with comultiplication morphisms
$$\Delta^{\rm cop}_{x,y}=c^{-1}_{C_{x,y},C_{x,y}}\circ \Delta_{x,y}:\ C_{x,y}\to C_{x,y}\ot C_{x,y},$$
and counit morphisms $\varepsilon^{\rm cop}_{x,y}=\varepsilon_{x,y}$.

\begin{proposition}\prlabel{opcop}
Let $\Vv$ be a strict braided monoidal category, and let $A$ be a $\Cog(\Vv)$-category.
Then $A^{\rm opcop}$ is also  a $\Cog(\Vv)$-category.
\end{proposition}

\begin{proof}
We have to show that the diagrams (\ref{eq:2.1.1}-\ref{eq:2.1.4}) applied to $A^{\rm opcop}$ commute.
\equref{2.1.1} takes the following form:
\begin{equation}\eqlabel{opcop1}
\xymatrix{
A_{y,x}\ot A_{z,y}\ar[d]_{\Delta_{y,x}^{\rm cop}\ot \Delta_{z,y}^{\rm cop}}\ar[rr]^{m^{\rm op}_{x,y,z}}
&&A_{z,x}\ar[d]^{\Delta_{z,x}^{\rm cop}}\\
A_{y,x}\ot A_{y,x}\ot A_{z,y}\ot A_{z,y}\ar[rr]^(.61){m_{A\bullet A,x,y,z}^{\rm op}}
&&A_{z,x}\ot A_{z,x}}
\end{equation}
From the axioms for a braiding $c$, we have the following formula, for all $A,B,C,D\in \Vv$:
\begin{equation}\eqlabel{opcop2}
c_{A\ot B,C\ot D}=(C\ot c_{A,D}\ot B)\circ (c_{A,C}\ot c_{B,D})\circ (A\ot c_{B,C}\ot D).
\end{equation}
The triangle, the squares and the pentangle in the next diagram all commute: the top square commutes
because $c$ is natural; the pentangle is just \equref{2.1.1}; the bottom right square commutes because
$c^{-1}$ is natural; commutativity of the bottom left square follows from \equref{opcop2}. We deleted
the indices in the morphisms in the diagram; they are pretty obvious.
$$\xymatrix{
A_{y,x}\ot A_{z,y}\ar[d]^{\Delta\ot \Delta}\ar[r]^c&
A_{z,y}\ot A_{y,x}\ar[d]^{\Delta\ot \Delta}\ar[r]^{m}&
A_{z,x}\ar[dd]^{\Delta}\\
A_{y,x}\ot A_{y,x}\ot A_{z,y}\ot A_{z,y}\ar[r]^c&
A_{z,y}\ot A_{z,y}\ot A_{y,x}\ot A_{y,x}\ar[d]^{A\ot c\ot A}\ar[dl]_{=}&\\
A_{z,y}\ot A_{z,y}\ot A_{y,x}\ot A_{y,x}\ar[d]^{c^{-1}\ot c^{-1}}&
A_{z,y}\ot A_{y,x}\ot A_{z,y}\ot A_{y,x}\ar[l]_{A\ot c^{-1}\ot A}\ar[d]^{c^{-1}}\ar[r]^(.65){m\ot m}&
A_{z,x}\ot A_{z,x}\ar[d]^{c^{-1}}\\
A_{z,y}\ot A_{z,y}\ot A_{y,x}\ot A_{y,x}\ar[r]^{A\ot c^{-1}\ot A}&
A_{z,y}\ot A_{y,x}\ot A_{z,y}\ot A_{y,x}\ar[r]^(.65){m\ot m}&
A_{z,x}\ot A_{z,x}}$$
From the commutativity of the whole diagram, it follows that
\begin{eqnarray*}
&&\hspace*{-15mm}
\Delta_{z,x}^{\rm cop}\circ m^{\rm op}_{x,y,z}=(m_{z,y,x}\ot m_{z,y,x})\circ (A_{z,y}\ot c^{-1}_{A_{y,x},A_{x,y}}
\ot A_{y,x})\\
&\circ& (c^{-1}_{A_{z,y},A_{z,y}}\ot c^{-1}_{A_{y,x},A_{y,x}})\circ  c_{A_{y,x}\ot A_{y,x}, A_{z,y}\ot A_{z,y}}
\circ (\Delta_{y,x}\ot \Delta_{y,x}).
\end{eqnarray*}
The square at the top of the next diagram commutes because $c$ is natural; commutativity of the bottom triangle
follows from \equref{opcop2}.
$$\xymatrix{
A_{y,x}\ot A_{y,x}\ot A_{z,y}\ot A_{z,y}\ar[rr]^c\ar[d]_{c^{-1}\ot c^{-1}}&&
A_{z,y}\ot A_{z,y}\ot A_{y,x}\ot A_{y,x}\ar[d]^{c^{-1}\ot c^{-1}}\\
A_{y,x}\ot A_{y,x}\ot A_{z,y}\ot A_{z,y}\ar[rr]^c\ar[d]_{A\ot c\ot A}&&
A_{z,y}\ot A_{z,y}\ot A_{y,x}\ot A_{y,x}\ar[d]^{A\ot c^{-1}\ot A}\\
A_{y,x}\ot A_{z,y}\ot A_{y,x}\ot A_{z,y}\ar[rr]^{c\ot c}&&
A_{z,y}\ot A_{y,x}\ot A_{z,y}\ot A_{y,x}}$$
It follows that \equref{opcop1} commutes. The commutativity of the three other diagrams is obvious.
\end{proof}

\prref{opcop} generalizes the fact that the opposite-cooposite of a bialgebra is again a bialgebra:
take $X$ a singleton. We refer to Sweedler \cite{Sweedler} for the case where $\Vv$ is the category
of vector spaces, and to \cite[1.6]{Takeuchi} for the case where $\Vv$ is an arbitrary braided monoidal category.

\begin{definition}\delabel{2.2}
A Hopf $\Vv$-category is a $\Cog(\Vv)$-category $A$ together with a morphism $S:\ A\to A^{\rm op}$ in $\Vv(X)$
such that 
\begin{eqnarray}
m_{x,y,x}\circ (A_{x,y}\ot S_{x,y})\circ \Delta_{x,y}&=& \eta_x\circ \varepsilon_{x,y}:\ A_{x,y}\to A_{x,x}\eqlabel{2.2.1};\\
m_{y,x,y}\circ ( S_{x,y}\ot A_{x,y})\circ \Delta_{x,y}&=& \eta_y\circ \varepsilon_{x,y}:\ A_{x,y}\to A_{y,y}\eqlabel{2.2.2},
\end{eqnarray}
for all $x,y\in X$.
\end{definition}

Observe that a Hopf $\Vv$-category with one object is a Hopf algebra in $\Vv$. If $\Vv=\Mm_k$,
then a Hopf $\Vv$-category is also termed a $k$-linear Hopf category.

\begin{example}\exlabel{2.11} {\bf Sets.}\\
Let $\Vv=(\Sets,\times, \{*\})$. We have seen above that a $\Vv$-category is an ordinary category.
It is well-known that every set $G$ is in a unique way a coalgebra in $\Sets$: the comultiplication is
the diagonal map $G\to G\times G$, sending $g$ to $(g,g)$. The counit is the unique map $G\to \{*\}$.
This means that the categories $\Sets$ and $\Cog(\Sets)$ are identical, and therefore the
same is true for the 2-categories $\Cat={}_{\Sets}\Cat$ and ${}_{\Cog(\Sets)}\Cat$.\\
Now let us investigate Hopf categories. Assume that $G$ is a Hopf category. For all $x,y\in X=|G|$, we have
a map $S_{x,y}:\ G_{x,y}\to G_{y,x}$, satisfying (\ref{eq:2.2.1}-\ref{eq:2.2.2}). Take $a\in G_{x,y}$, this means
that $a:\ y\to x$ is a morphism in $G$. It is easily checked that \equref{2.2.1} implies that $aS_{x,y}(a)=1_x$
and that \equref{2.2.2} implies that $S_{x,y}(a)a=1_y$. This shows that every morphism of $G$ is
invertible, hence $G$ is a
groupoid. Conversely, it is easy to show that a groupoid is a Hopf category.
\end{example}

\begin{proposition}\prlabel{2.12}
Let $\Vv=(\Sets,\times, \{*\})$. Then a Hopf $\Vv$-category is the same thing as a groupoid.
\end{proposition}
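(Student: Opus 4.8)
The plan is to prove the equivalence claimed in \prref{2.12} by showing the two directions separately, using the correspondence already set up in \exref{2.11} together with the dictionary between $\Sets$-categories and ordinary categories established in \seref{1}. Recall that for a $\Sets$-category $G$ with underlying class $X$ we set $\Hom_G(x,y)=G_{y,x}$, with composition $b\circ a=m_{z,y,x}(b,a)$ for $a\in\Hom_G(x,y)$ and $b\in\Hom_G(y,z)$, and identity $1_x=\eta_x(*)\in G_{x,x}$. Since $\Cog(\Sets)=\Sets$ and hence ${}_{\Sets}\Cat={}_{\Cog(\Sets)}\Cat$, a $\Cog(\Vv)$-category over $\Sets$ is simply an ordinary category; the only extra data in a Hopf $\Vv$-category is the antipode $S$.

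First I would treat the forward direction: given a Hopf $\Vv$-category $G$, produce a groupoid. The heart of the matter is to unpack the antipode axioms \equref{2.2.1} and \equref{2.2.2} in the category $\Sets$. Here the comultiplication $\Delta_{x,y}$ is the diagonal $a\mapsto(a,a)$ and $\varepsilon_{x,y}$ is the constant map to $\{*\}$, so for $a\in G_{x,y}$ the left-hand side of \equref{2.2.1} evaluates to $m_{x,y,x}(a,S_{x,y}(a))$ while the right-hand side $\eta_x\circ\varepsilon_{x,y}$ sends $a$ to $1_x=\eta_x(*)$. Rewriting the multiplication in terms of composition, and viewing $a$ as a morphism $y\to x$ and $S_{x,y}(a)$ as a morphism $x\to y$, \equref{2.2.1} says exactly $a\circ S_{x,y}(a)=1_x$ and \equref{2.2.2} says $S_{x,y}(a)\circ a=1_y$. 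Thus $S_{x,y}(a)$ is a two-sided inverse for $a$, so every morphism is invertible and $G$ is a groupoid. This is precisely the computation sketched in \exref{2.11}, and I would simply carry it out cleanly, being careful about the variance convention $\Hom_G(x,y)=G_{y,x}$ so that the sources and targets match up.

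For the converse, given a groupoid $\Gg$ I would build a Hopf $\Vv$-category. The underlying $\Cog(\Sets)$-category is the one corresponding to $\Gg$ as an ordinary category, and I define the antipode by $S_{x,y}(a)=a^{-1}$, where inversion makes sense precisely because $\Gg$ is a groupoid. One then checks that $S$ is a morphism $A\to A^{\rm op}$ in $\Vv(X)$ (every map of sets is automatically a coalgebra morphism in $\Sets$, so there is nothing to verify on the coalgebra side) and that the two antipode axioms hold; but these are just the identities $a\circ a^{-1}=1_x$ and $a^{-1}\circ a=1_y$ read backwards through the same dictionary, so they are immediate. Together with the uniqueness of the inverse in a groupoid, this also shows the antipode is uniquely determined, so the two constructions are mutually inverse.

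I do not expect a genuine obstacle here; the statement is essentially a repackaging of \exref{2.11}. The only point demanding care is bookkeeping: consistently translating between the matrix-style indices $G_{x,y}$ and the hom-set notation $\Hom_G(x,y)=G_{y,x}$, and checking that the diagonal comultiplication and the unique counit of a set-coalgebra are what collapse \equref{2.2.1} and \equref{2.2.2} into the plain group-theoretic inverse conditions. Keeping the source/target conventions straight is the one place an error could creep in, so I would verify the variance explicitly once and then let the computation run.
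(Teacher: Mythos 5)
Your argument is correct and follows essentially the same route as the paper, whose justification for \prref{2.12} is exactly the computation sketched in \exref{2.11}: the identification $\Cog(\Sets)=\Sets$, the collapse of \equref{2.2.1} and \equref{2.2.2} to $aS_{x,y}(a)=1_x$ and $S_{x,y}(a)a=1_y$, and the converse via $S_{x,y}(a)=a^{-1}$. Your extra care with the variance convention $\Hom_G(x,y)=G_{y,x}$ and the remark on uniqueness of the antipode are welcome but do not change the substance.
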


\begin{lemma}\lelabel{2.13}
Let $A$ be a Hopf $\Vv$-category. Then the following statements hold, for all $x,y,z\in X$:
\begin{eqnarray}
S_{x,z}\circ m_{x,y,z}&=&m_{z,y,x}\circ (S_{y,z}\ot S_{x,y})\circ c_{A_{x,y},A_{y,z}};\eqlabel{2.13.1}\\
\Delta_{y,x}\circ S_{x,y}&=&c_{A_{y,x},A_{y,x}}\circ (S_{x,y}\ot S_{x,y})\circ \Delta_{x,y}.\eqlabel{2.13.2}
\end{eqnarray}
\end{lemma}

\begin{proof}
In order to make our computations more transparant, we introduce some notation. $A_{x,y}\ot A_{y,z}$ is
a coalgebra, with comultiplication
$$\Delta_{x,y,z}=(A_{x,y}\ot c_{A_{x,y},A_{y,z}}\ot A_{y,z})\circ (\Delta_{x,y}\ot \Delta_{y,z})$$
and counit $\varepsilon_{x,y,z}=\varepsilon_{x,y}\ot \varepsilon_{y,z}$. \equref{2.1.1} can be restated as
\begin{equation}\eqlabel{2.13.3}
\Delta_{x,z}\circ m_{x,y,z}= (m_{x,y,z}\ot m_{x,y,z})\circ \Delta_{x,y,z}.
\end{equation}
The coassociativity of $\Delta_{x,y,z}$ is expressed by the formula
\begin{equation}\eqlabel{2.13.4}
\Delta^2_{x,y,z}=(\Delta_{x,y,z}\ot A_{x,y}\ot A_{y,z})\circ \Delta_{x,y,z}=(A_{x,y}\ot A_{y,z}\ot \Delta_{x,y,z})\circ \Delta_{x,y,z}.
\end{equation}
Now consider the morphisms $f,g,h:\ A_{x,y}\ot A_{y,z}\to Z_{z,x}$ given by the formulas
\begin{eqnarray*}
f&=& m_{z,y,x}\circ (S_{y,z}\ot S_{x,y})\circ c_{A_{x,y},A_{y,z}};\\
g&=& S_{x,z}\circ m_{x,y,z};\\
h&=&m^3_{z,x,y,z,x}\circ (f\ot A_{x,y}\ot A_{y,z}\ot g)\circ \Delta^2_{x,y,z}.
\end{eqnarray*}
We compute that
\begin{eqnarray*}
&&\hspace*{-1cm}
m^2_{x,y,z,x}\circ (A_{x,y}\ot A_{y,z}\ot g)\circ \Delta_{x,y,z}\\
&=& m_{x,z,x}\circ (A_{x,z}\ot S_{x,z})\circ (m_{x,y,z}\ot m_{x,y,z})\circ \Delta_{x,y,z}\\
&\equal{\equref{2.13.3}}&m_{x,z,x}\circ (A_{x,z}\ot S_{x,z})\circ \Delta_{x,z}\circ m_{x,y,z}\\
&\equal{\equref{2.2.1}}&\eta_x\circ \varepsilon_{x,z}\circ m_{x,y,z}\equal{\equref{2.1.3}}\eta_x\circ \varepsilon_{x,y,z},
\end{eqnarray*}
and
$$h=m_{z,x,x}\circ (f\ot \eta_x)\circ (A_{x,y}\ot A_{y,z}\circ \varepsilon_{x,y,z})\circ \Delta_{x,y,z}=f.$$
On the other hand, we have that
\begin{eqnarray*}
&&\hspace*{-1cm}
m^2_{z,x,y,z}\circ (f\ot A_{x,y}\ot A_{y,z})\circ \Delta_{x,y,z}\\
&=&m^3_{z,y,x,y,z}\circ (S_{y,z}\ot S_{x,y}\ot A_{x,y}\ot A_{y,z})\circ (c_{A_{x,y},A_{y,z}}\ot A_{x,y}\ot A_{y,z})\\
&&~~~\circ~ (A_{x,y}\ot c_{A_{x,y},A_{y,z}}\ot A_{y,z})\circ (\Delta_{x,y}\ot \Delta_{y,z})\\
&=& m^3_{z,y,x,y,z}\circ (S_{y,z}\ot S_{x,y}\ot A_{x,y}\ot A_{y,z})\\
&&~~~\circ~(c_{A_{x,y}\ot A_{x,y},A_{y,z}}\ot A_{y,z}) \circ (\Delta_{x,y}\ot \Delta_{y,z})\\
&\equal{(*)}&
m^2_{z,y,y,z}\circ (c_{A_{y,y},A_{z,y}}\ot A_{y,z})\circ (m_{y,x,y}\ot A_{z,y}\ot A_{z,y})\\
&&~~~\circ~ (S_{x,y}\ot A_{x,y}\ot S_{y,z}\ot A_{y,z})\circ (\Delta_{x,y}\ot \Delta_{y,z})\\
&\equal{\equref{2.2.2}}&
m^2_{z,y,y,z}\circ (c_{A_{y,y},A_{z,y}}\ot A_{y,z})\circ (\eta_y \ot A_{z,y}\ot A_{z,y})\\
&&~~~\circ~(S_{y,z}\ot A_{y,z})\circ (\varepsilon_{x,y}\ot \Delta_{y,z})\\
&=&m^2_{z,y,y,z}\circ (A_{z,y}\ot \eta_y\ot A_{z,y}) \circ(S_{y,z}\ot A_{y,z})\circ (\varepsilon_{x,y}\ot \Delta_{y,z})\\
&=& m_{z,y,z}\circ (S_{y,z}\ot A_{y,z})\circ \Delta_{y,z}\circ (\varepsilon_{x,y}\ot A_{y,z})\\
&\equal{\equref{2.2.2}}& \eta_z\circ \varepsilon_{y,z} \circ (\varepsilon_{x,y}\ot A_{y,z})= \eta_z\circ \varepsilon_{x,y,z}.
\end{eqnarray*}
At $(*)$, we used the naturality of the braiding $c$, resulting in the commutativity of the diagram
$$\xymatrix{
(A_{x,y}\ot A_{x,y})\ot A_{y,z}\ar[d]_{(S_{x,y}\ot A_{x,y})\ot S_{y,z}}\ar[rr]^{c_{A_{x,y}\ot A_{x,y}, A_{y,z}}}&&
A_{y,z}\ot (A_{x,y}\ot A_{x,y})\ar[d]^{S_{y,z}\ot (S_{x,y}\ot A_{x,y})}\\
(A_{y,x}\ot A_{x,y})\ot A_{z,y}\ar[d]_{m_{y,x,y}\ot A_{z,y}}\ar[rr]^{c_{A_{y,x}\ot A_{x,y}, A_{z,y}}}&&
A_{z,y}\ot (A_{y,x}\ot A_{x,y})\ar[d]^{A_{z,y}\ot m_{y,x,y}}\\
A_{y,y}\ot A_{z,y}\ar[rr]^{c_{A_{y,y},A_{z,y}}}&&
A_{z,y}\ot A_{y,y}}$$
Finally,
\begin{eqnarray*}
f=h&=& m_{z,z,x}\circ ((\eta_z\circ \varepsilon_{x,y,z})\ot g)\circ \Delta_{x,y,z}\\
&=& m_{z,z,x}\circ (\eta_z\ot A_{z,x})\circ g\circ (\varepsilon_{x,y,z})\ot A_{x,y}\ot A_{y,z})\circ \Delta_{x,y,z}=g.
\end{eqnarray*}
This proves formula \equref{2.13.1}. \equref{2.13.2} is proved using similar techniques. Now we consider
$f,g,h:\ A_{x,y}\to A_{y,x}\ot A_{y,x}$ given by the formulas
\begin{eqnarray*}
f&=& c_{A_{y,x},A_{y,x}}\circ (S_{x,y}\ot S_{x,y})\circ \Delta_{x,y};\\
g&=& \Delta_{y,x}\circ S_{x,y};\\
h&=& m^2_{A\bullet A,y,x,y,x}\circ (g\ot A_{x,y}\ot A_{x,y}\ot f)\circ \Delta_{x,y}^3,
\end{eqnarray*}
In the subsequent computations, the coassociativity of $m^{A\bullet A}$ will be used frequently. We first compute that
\begin{eqnarray*}
&&\hspace*{-2cm}
m^{A\bullet A}_{y,x,y}\circ (g\ot A_{x,y}\ot A_{x,y})\circ \Delta_{x,y}^2\\
&=&
m^{A\bullet A}_{y,x,y}\circ (\Delta_{y,x}\ot \Delta_{x,y})\circ (S_{x,y}\ot A_{x,y})\circ \Delta_{x,y}\\
&\equal{\equref{2.1.1}}&
\Delta_{y,y}\circ m_{y,x,y}\circ (S_{x,y}\ot A_{x,y})\circ \Delta_{x,y}\\
&\equal{\equref{2.2.2}}&
\Delta_{y,y}\circ \eta_\circ \varepsilon_{x,y}=\eta^{A\bullet A}_{y}\circ \varepsilon_{x,y}.
\end{eqnarray*}
It follows that
$$h=m^{A\bullet A}_{y,x,y}\circ(\eta^{A\bullet A}_{y}\ot A_{y,x}\ot A_{y,x})\circ (\varepsilon_{x,y}\ot f)\circ \Delta_{x,y}=f.$$
Now
\begin{eqnarray*}
&&\hspace*{-1cm}
m^{A\bullet A}_{x,y,x}\circ (A_{x,y}\ot A_{x,y}\ot f)\circ \Delta_{x,y}^2\\
&=&
(m_{x,y,x}\ot m_{x,y,x})\circ (A_{x,y}\ot c_{A_{x,y},A_{y,x}}\ot A_{y,x})\\
&&~~~\circ~
(A_{x,y}\ot A_{x,y}\ot c_{A_{y,x},A_{y,x}})\circ (A_{x,y}\ot A_{x,y}\ot  S_{x,y}\ot S_{x,y})\circ \Delta^3_{x,y}\\
&=&
(m_{x,y,x}\ot m_{x,y,x})\circ (A_{x,y}\ot c_{A_{x,y}\ot A_{y,x},A_{y,x}})\\
&&~~~\circ~(A_{x,y}\ot A_{x,y}\ot  S_{x,y}\ot S_{x,y})\circ \Delta^3_{x,y}\\
&\equal{(x)}&
(m_{x,y,x}\ot m_{x,y,x})\circ (A_{x,y}\ot S_{x,y}\ot A_{x,y}\ot S_{x,y})
\\
&&~~~\circ~(A_{x,y}\ot A_{x,y}\ot  \Delta_{x,y})\circ (A_{x,y}\ot c_{A_{x,y},A_{x,y}})\circ \Delta^2_{x,y}\\
&\equal{\equref{2.2.2}}&
(m_{x,y,x}\ot \eta_x)\circ (A_{x,y}\ot S_{x,y})\circ (A_{x,y}\ot A_{x,y}\ot \varepsilon_{x,y})\\
&&~~~\circ~(A_{x,y}\ot c_{A_{x,y},A_{x,y}})\circ \Delta^2_{x,y}\\
&=& (A_{x,x}\ot \eta_x)\circ m_{x,y,x}\circ (A_{x,y}\ot S_{x,y})\circ (A_{x,y}\ot \varepsilon_{x,y}\ot A_{x,y})\\
&&~~~\circ~(A_{x,y}\ot \Delta_{x,y})\circ\Delta_{x,y}\\
&=& (A_{x,x}\ot \eta_x)\circ m_{x,y,x}\circ (A_{x,y}\ot S_{x,y})\circ \circ\Delta_{x,y}\\
&\equal{\equref{2.2.2}}&
(A_{x,x}\ot \eta_x)\circ \eta_x\circ \varepsilon_{x,y}= (\eta_x\ot \eta_x)\circ \varepsilon_{x,y}.
\end{eqnarray*}
At ($x$), we used the naturality of $c$, resulting in the commutative diagram
$$\xymatrix{
A_{x,y}\ot A_{x,y}\ar[d]_{\Delta_{x,y}\ot A}\ar[rr]^{c_{A_{x,y},A_{x,y}}}&&
A_{x,y}\ot A_{x,y}\ar[d]^{A\ot \Delta_{x,y}}\\
A_{x,y}\ot A_{x,y}\ot A_{x,y}\ar[d]_{A_{x,y}\ot S_{x,y}\ot S_{x,y}}&&
A_{x,y}\ot A_{x,y}\ot A_{x,y}\ar[d]^{S_{x,y}\ot A_{x,y}\ot S_{x,y}}\\
A_{x,y}\ot A_{y,x}\ot A_{y,x}\ar[rr]^{c_{A_{x,y}\ot A_{y,x},A_{y,x}}}&&
A_{y,x}\ot A_{x,y}\ot A_{y,x}}$$
Finally
\begin{eqnarray*}
f=h&=& m^{A\bullet A}_{y,x,y}\circ (g\ot ((\eta_x\ot \eta_x)\circ \varepsilon_{x,y})\circ \Delta_{x,y}\\
&=& m^{A\bullet A}_{y,x,y}\circ (A_{y,x}\ot A_{y,x}\ot \eta_{A\bullet A,x})\circ g \circ (A_{x,y}\ot \varepsilon_{x,y})\circ \Delta_{x,y}=g
\end{eqnarray*}
\end{proof}

\begin{theorem}\thlabel{2.3}
Let $A$ be a Hopf $\Vv$-category. The antipode $S:\ A\to A^{\rm opcop}$ is a $\Cog(\Vv)\hbox{-}X$-functor.
\end{theorem}

\begin{proof}
First of all, we need to verify that every $S_{x,y}$ is a morphism in $\Cc(\Vv)$, that is, $S_{x,y}:\ A_{x,y}\to A_{y,x}^{\rm cop}$
is a morphism of coalgebras. To this end, we need the commutativity of the next two diagrams
$$\xymatrix{
A_{x,y}\ar[d]_{S_{x,y}}\ar[r]^(.40){\Delta_{x,y}}&
A_{x,y}\ot A_{x,y}\ar[d]^{S_{x,y}\ot S_{x,y}}\\
A_{y,x}\ar[r]^(.40){\Delta^{\rm cop}_{y,x}}&
A_{y,x}\ot A_{y,x}}\hspace*{3cm}
\xymatrix{
A_{x,y}\ar[d]_{S_{x,y}}\ar[r]^{\varepsilon_{x,y}}& k\\
A_{y,x}\ar[ru]_{\varepsilon_{y,x}}&}$$
The commutativity of the first diagram follows immediately from \equref{2.13.2}. For the second one, we
proceed as follows:
\begin{eqnarray*}
\varepsilon_{x,y}&\equal{\equref{2.1.4}}&\varepsilon_{x,x}\circ \eta_x\circ \varepsilon_{x,y}
\equal{\equref{2.2.1}}\varepsilon_{x,x}\circ m_{x,y,x}\circ (A_{x,y}\ot S_{x,y})\circ \Delta_{x,y}\\
&\equal{\equref{2.1.3}}& (\varepsilon_{x,y}\ot \varepsilon_{y,x})\circ (A_{x,y}\ot S_{x,y})\circ \Delta_{x,y}
= \varepsilon_{y,x}\circ S_{x,y}\circ (\varepsilon_{x,y}\ot _{x,y})\Delta_{x,y}\\
&=&\varepsilon_{y,x}\circ S_{x,y}.
\end{eqnarray*}
Now we show that $S$ is a $\Cog(\Vv)$-functor. The diagrams \equref{Vfunctor} take the following form
$$\xymatrix{
A_{x,y}\ot A_{y,z}\ar[d]_{S_{x,y}\ot S_{y,z}}\ar[rr]^{m_{x,y,z}}&&
A_{x,z}\ar[d]^{S_{x,z}}\\
A_{y,x}\ot A_{z,y}\ar[rr]^{m^{\rm op}_{x,y,z}}&&A_{z,x}}\hspace*{1cm}
\xymatrix{k\ar[r]^{\eta_x}\ar[dr]_{\eta_x}&A_{x,x}\ar[d]^{S_{x,x}}\\ &A_{x,x}}$$
The commutativity of the first diagram follows from \equref{2.13.1}, after making the observation that
$m^{\rm op}_{x,y,z}=m_{z,y,x}\circ c_{A_{y,x},A_{z,y}}$, and taking into account the formula
$$(S_{y,z}\ot S_{x,y})\circ c_{A_{x,y},A_{y,z}}=c_{A_{y,x},A_{z,y}}\circ (S_{x,y}\ot S_{y,z}),$$
resulting from the naturality of $c$. The commutativity of the second diagram goes as follows:
\begin{eqnarray*}
\eta_x&=&
(\varepsilon_{x,x}\ot A_{x,x})\circ \Delta_x\circ \eta_x\equal{\equref{2.1.2}}
(\varepsilon_{x,x}\ot A_{x,x})\circ (\eta_x\ot \eta_x)\\
&=& (\varepsilon_{x,x}\circ \eta_x)\ot \eta_x= \eta_x\circ \varepsilon_{x,x}\circ \eta_x\\
&\equal{\equref{2.2.1}}&
m_{x,x,x}\circ (A_{x,x}\ot S_{x,x})\circ \Delta_{x,x}\circ \eta_x\\
&\equal{\equref{2.1.2}}& m_{x,x,x}\circ (\eta_x\ot A_{x,x})\circ S_{x,x}\circ \eta_x= S_{x,x}\circ \eta_x.
\end{eqnarray*}
\end{proof}

\begin{proposition}\prlabel{2.4}
Let $A$ be a Hopf $\Vv$-category. For $x,y\in X$, consider the following statements:
\begin{eqnarray}
\eta_y\circ \varepsilon_{y,x}&=&
m_{y,x,y}\circ (A_{y,x}\ot S_{y,x})\circ \Delta_{y,x}^{\rm cop}~;\eqlabel{2.4.1}\\
\eta_x\circ \varepsilon_{y,x}&=&
m_{x,y,x}\circ (S_{y,x}\ot A_{y,x})\circ \Delta_{y,x}^{\rm cop}~;\eqlabel{2.4.2}\\
S_{y,x}\circ S_{x,y}&=&A_{x,y}~;\eqlabel{2.4.3}\\
\eta_x\circ \varepsilon_{x,y}&=&
m_{x,y,x}^{\rm op}\circ (S_{x,y}\ot A_{x,y})\circ \Delta_{x,y}~;\eqlabel{2.4.4}\\
\eta_y\circ \varepsilon_{x,y}&=&
m_{y,x,y}^{\rm op}\circ (A_{x,y}\ot S_{x,y})\circ \Delta_{x,y}.\eqlabel{2.4.5}
\end{eqnarray}
The following implications hold:
$$\xymatrix{
\equref{2.4.1}\ar@{=>}[r]&\equref{2.4.3}\ar@{=>}[r]\ar@{=>}[dr]&\equref{2.4.4}\\
\equref{2.4.2}\ar@{=>}[ur]&&\equref{2.4.5}}$$
\end{proposition}

\begin{proof}
$\ul{\equref{2.4.1}\Rightarrow \equref{2.4.3}}$. This goes in two steps. First we compute that
\begin{eqnarray*}
&&\hspace*{-15mm}
m_{y,x,y}\circ (S_{x,y}\ot (S_{y,x}\circ S_{x,y}))\circ \Delta_{x,y}\\
&=&m_{y,x,y}\circ (A_{y,x}\ot S_{y,x})\circ c^{-1}_{A_{y,x},A_{y,x}}\circ c_{A_{y,x},A_{y,x}}\circ (S_{x,y}\ot S_{x,y})\circ \Delta_{x,y}\\
&\equal{\equref{2.13.2}}&
m_{y,x,y}\circ (A_{y,x}\ot S_{y,x})\circ \Delta_{y,x}^{\rm cop}\circ S_{x,y}
\equal{\equref{2.4.1}}\eta_y\circ \varepsilon_{y,x}\circ S_{x,y}=\eta_y\circ \varepsilon_{x,y}.
\end{eqnarray*}
Then we compute that
$$m^2_{x,y,x,y}\circ (A_{x,y}\ot S_{x,y}\ot (S_{y,x}\circ S_{x,y}))\circ \Delta^2_{x,y}$$
is equal to
$$m_{x,y,y}\circ (A_{x,y}\ot \eta_y)\circ (A_{x,y}\ot\varepsilon_{x,y})\circ \Delta_{x,y}=A_{x,y}$$
and, using  \equref{2.2.1}, to
$$m_{x,x,y}(\eta_x\ot A_{x,y})\circ S_{y,x}\circ S_{x,y}\circ (\varepsilon_{x,y}\ot A_{x,y})\circ \Delta_{x,y}
= S_{y,x}\circ S_{x,y}.$$
$\ul{\equref{2.4.3}\Rightarrow \equref{2.4.4}}$.
\begin{eqnarray*}
&&\hspace*{-15mm}
\eta_x\circ \varepsilon_{x,y}= S_{x,x}\circ \eta_x\circ \varepsilon_{x,y}\equal{\equref{2.2.1}}
S_{x,x}\circ m_{x,y,x}\circ (A_{x,y}\ot S_{x,y})\circ \Delta_{x,y}\\
&\equal{\equref{2.13.1}}&
m_{x,y,x}\circ (S_{y,x}\ot S_{x,y})\circ c_{A_{x,y},A_{y,x}}\circ (A_{x,y}\ot S_{x,y})\circ \Delta_{x,y}\\
&=& m_{x,y,x}\circ c_{A_{y,x},A_{x,y}}\circ (S_{x,y} \ot (S_{y,x}\circ S_{x,y}))\circ \Delta_{x,y}\\
&\equal{\equref{2.4.3}}& m_{x,y,x}^{\rm op}\circ (S_{x,y}\ot A_{x,y})\circ \Delta_{x,y}.
\end{eqnarray*}
The proof of the remaining two implications is similar.
\end{proof}

\begin{corollary}\colabel{2.4}
Suppose that $\Vv$ is a symmetric monoidal category. For a Hopf $\Vv$-category, the following assertions
are equivalent:
\begin{enumerate}
\item \equref{2.4.1} holds, for all $x,y\in X$;
\item \equref{2.4.2} holds, for all $x,y\in X$;
\item $S_{y,x}\circ S_{x,y}=A_{x,y}$, for all $x,y\in X$.
\end{enumerate}
\end{corollary}

\begin{proof}
Using the naturality of $c$ and the fact that $c$ is a symmetry, we obtain that
\begin{eqnarray*}
&&\hspace*{-2cm}
m_{x,y,x}^{\rm op}\circ (S_{x,y}\ot A_{x,y})\circ \Delta_{x,y}\\
&=& m_{x,y,x} \circ c_{A_{y,x},A_{x,y}}\circ (S_{x,y}\ot A_{x,y})\circ \Delta_{x,y}\\
&=& m_{x,y,x} \circ (A_{x,y}\ot S_{x,y})\circ c_{A_{x,y},A_{y,x}}\circ \Delta_{x,y}\\
&=& m_{x,y,x} \circ (A_{x,y}\ot S_{x,y})\circ c^{-1}_{A_{x,y},A_{y,x}}\circ \Delta_{x,y}\\
&=& m_{x,y,x} \circ (A_{x,y}\ot S_{x,y})\circ \Delta^{\rm cop}_{x,y}.
\end{eqnarray*}
This tells us that \equref{2.4.4} considered for $(x,y)\in X\times X$ is equivalent to \equref{2.4.1}
considered for $(y,x)\in X\times X$. The statement now follows easily.
\end{proof}

Let $A$ and $B$ be Hopf $\Vv$-categories. A $\Cog(\Vv)$-functor $f:\ A\to B$ is
called a  Hopf $\Vv$-functor if
\begin{equation}\eqlabel{2.5.1}
S^{B}_{f(x),f(y)}\circ f_{x,y}=f_{y,x}\circ S^{A}_{x,y},
\end{equation}
for all $x,y\in X$.

\begin{proposition}\prlabel{2.5}
Let $A$ and $B$ be Hopf $\Vv$-categories. If $f:\ A\to B$ is a 
$\Cog(\Vv)$-functor, then it is also a Hopf $\Vv$-functor.
\end{proposition}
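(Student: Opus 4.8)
The plan is to adapt the classical argument that every bialgebra morphism between Hopf algebras automatically commutes with the antipodes; the only genuinely new feature is the bookkeeping of the several hom-objects and multiplication morphisms that replace a single convolution algebra. Following the convention announced just before \prref{2.4}, I would carry out the computation for $\Vv=\Mm_k$ in Sweedler notation, the general braided case being obtained by rewriting each line as a commutative diagram (inserting the braiding $c$ where tensor factors are permuted), exactly as in the proof of \thref{2.3}.

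Fix $x,y\in X$ and set $g=f_{y,x}\circ S^A_{x,y}$ and $g'=S^B_{f(x),f(y)}\circ f_{x,y}$. Both are morphisms $A_{x,y}\to B_{f(y),f(x)}$, and \equref{2.5.1} is precisely the assertion $g=g'$. I would introduce a generalized convolution on maps out of the coalgebra $A_{x,y}$: given two such maps landing in hom-objects that are composable in $B$, their product sends $h$ to (first map on $h_{(1)}$) times (second map on $h_{(2)}$), the multiplication being the appropriate $m^B$. Thus $f_{x,y}\star g'$ lands in $B_{f(x),f(x)}$ via $m^B_{f(x),f(y),f(x)}$, while $g\star f_{x,y}$ lands in $B_{f(y),f(y)}$ via $m^B_{f(y),f(x),f(y)}$; the relevant neutral elements are $e_x=\eta_{f(x)}\circ\varepsilon_{x,y}\colon A_{x,y}\to B_{f(x),f(x)}$ and $e_y=\eta_{f(y)}\circ\varepsilon_{x,y}\colon A_{x,y}\to B_{f(y),f(y)}$.

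The two key identities are the following. First, $f_{x,y}\star g'=e_x$: since $f_{x,y}$ is a coalgebra map we may replace $f_{x,y}(h_{(1)})\ot f_{x,y}(h_{(2)})$ by $\Delta^B(f_{x,y}(h))$, and then the antipode axiom \equref{2.2.1} for $B$ together with $\varepsilon^B_{f(x),f(y)}\circ f_{x,y}=\varepsilon^A_{x,y}$ yields $\varepsilon^A_{x,y}(h)1_{f(x)}$. Second, $g\star f_{x,y}=e_y$: since $f$ is a $\Vv$-functor we have $f_{y,x}(S^A_{x,y}(h_{(1)}))\,f_{x,y}(h_{(2)})=f_{y,y}\bigl(S^A_{x,y}(h_{(1)})\,h_{(2)}\bigr)$, and then the antipode axiom \equref{2.2.2} for $A$ together with $f_{y,y}(1_y)=1_{f(y)}$ yields $\varepsilon^A_{x,y}(h)1_{f(y)}$.

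To conclude I would form the triple convolution $g\star f_{x,y}\star g'\colon A_{x,y}\to B_{f(y),f(x)}$, $h\mapsto g(h_{(1)})\,f_{x,y}(h_{(2)})\,g'(h_{(3)})$. Bracketing the first two factors and using $g\star f_{x,y}=e_y$ reduces it to $e_y\star g'$, which collapses to $g'$ because the counit absorbs $e_y$ and $1_{f(y)}$ is a unit by \equref{1.2}; bracketing the last two factors and using $f_{x,y}\star g'=e_x$ reduces it to $g\star e_x=g$ for the same reason. Since the two bracketings coincide, we get $g=g'$. The main obstacle is organizational rather than computational: one must check that the associativity legitimizing the two bracketings is exactly the $\Vv$-category associativity \equref{1.1} for $B$ applied across the three hom-objects $B_{f(y),f(x)}$, $B_{f(x),f(y)}$, $B_{f(y),f(x)}$ (using coassociativity of $\Delta^A_{x,y}$ on the source side), and that $e_x$, $e_y$ genuinely act as one-sided units on the correct flanks — there being no ambient convolution algebra here, the single convolution unit is replaced by this matched pair of maps.
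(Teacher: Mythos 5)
Your proof is correct and follows essentially the same route as the paper's: both are the classical convolution-sandwich argument showing that $f\circ S^A$ and $S^B\circ f$ are one-sided convolution inverses of $f$ (with the matched pair of units $e_x$, $e_y$ replacing the single convolution unit), hence equal by rebracketing the triple product. The paper merely writes this rebracketing as one telescoping computation starting from $S^B_{f(x),f(y)}(f_{x,y}(h))$ and uses the mirror pair of antipode axioms (\equref{2.2.1} for $A$, \equref{2.2.2} for $B$) to the pair you use, but the content is identical.
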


\begin{proof}
Consider the morphisms $k,g,h:\ A_{x,y}\to F_{f(y),f(x)}$ defined by the formulas
$$k= S_{f(x),f(h)}\circ f_{x,y}~~;~~g=f_{y,x}\circ S_{x,y}~~;~~
h=m^2_{f(y),f(x),f(y),f(x)}\circ (k\ot f_{x,y}\ot g)\circ \Delta_{x,y}^2.$$
We have that
\begin{eqnarray*}
&&\hspace*{-2cm}
m_{f(x),f(y),f(x)}\circ (f_{x,y}\ot g)\circ \Delta_{x,y}\\
&=& 
m_{f(x),f(y),f(x)}\circ (f_{x,y}\ot f_{x,y})\circ (A_{x,y}\ot S_{x,y})\circ \Delta_{x,y}\\
&=&
f_{x,x}\circ m_{x,y,x}\circ  (A_{x,y}\ot S_{x,y})\circ \Delta_{x,y}\\
&\equal{\equref{2.2.1}}& f_{x,x}\circ \eta_x\circ \varepsilon_{x,y}=\eta_{f(x)}\circ \varepsilon_{x,y},
\end{eqnarray*}
hence
$$h=m_{f(y),f(y),f(x)}\circ (B_{f(y),f(x)}\ot \eta_{f(x)})\circ k \circ (A_{x,y}\ot \varepsilon_{x,y})\circ \Delta_{x,y}=k.$$
We also have that
\begin{eqnarray*}
&&\hspace*{-2cm}
m_{f(y),f(x),f(y)}\circ (k\ot f_{x,y})\circ \Delta_{x,y}\\
&=&
m_{f(y),f(x),f(y)}\circ (S_{f(x),f(y)}\ot B_{f(x),f(y)})\circ (f_{x,y}\ot f_{x,y})\circ \Delta_{x,y}\\
&=&
m_{f(y),f(x),f(y)}\circ (S_{f(x),f(y)}\ot B_{f(x),f(y)})\circ \Delta_{f(x),f(y)}\circ f_{x,y}\\
&\equal{\equref{2.2.2}}&
\eta_{f(y)}\circ \varepsilon_{f(x),f(y)}\circ f_{x,y}=\eta_{f(y)}\circ \varepsilon_{x,y},
\end{eqnarray*}
so that
$$k=h=m_{f(y),f(y),f(x)}\circ (\eta_{f(y)}\ot B_{f(y),f(x)})\circ g \circ (\varepsilon_{x,y}\ot A_{x,y})\circ \Delta_{x,y}=g.$$
\end{proof}

We introduce ${}_{\Vv}\dul{\rm HopfCat}$ as the full 2-subcategory
of ${}_{\Cog(\Vv)}\Cat$, with Hopf $\Vv$-categories as 0-cells. For two Hopf
$\Vv$-categories $A$ and $B$, the category of morphisms $A\to B$ in
${}_{\Vv}\dul{\rm HopfCat}$ coincides with the category of morphisms $A\to B$ in
${}_{\Cog(\Vv)}\Cat$. Thus 1-cells are Hopf $\Vv$-functors (in view of \prref{2.5})
and 2-cells are $\Cog(\Vv)$-natural transformations.

\begin{proposition}\prlabel{2.6}
Let $F:\ \Vv\to \Ww$ be a strong monoidal functor. $F$ induces bifunctors
$F:\ {}_{\Cog(\Vv)}\Cat\to {}_{\Cog(\Ww)}\Cat$ and
${}_{\Vv}\dul{\rm HopfCat}\to {}_{\Ww}\dul{\rm HopfCat}$.
\end{proposition}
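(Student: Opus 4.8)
The plan is to deduce both bifunctors from \prref{1.1} by showing that a strong monoidal functor $F:\ \Vv\to\Ww$ induces a strong monoidal functor $\Cog(F):\ \Cog(\Vv)\to\Cog(\Ww)$ on the categories of coalgebras, and then transporting the antipode along $F$. First I would check that $F$ sends coalgebras to coalgebras. Given a coalgebra $(C,\Delta,\varepsilon)$ in $\Vv$, the hypothesis that $F$ is strong makes $\varphi_0$ and $\varphi_2$ invertible, so I can endow $F(C)$ with comultiplication $\varphi_2(C,C)^{-1}\circ F(\Delta):\ F(C)\to F(C)\sq F(C)$ and counit $\varphi_0^{-1}\circ F(\varepsilon):\ F(C)\to l$. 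Coassociativity and the counit axioms are precisely the standard fact that a strong monoidal functor preserves comonoids, and follow from the coherence conditions on $(F,\varphi_0,\varphi_2)$ together with the coalgebra axioms for $C$. On morphisms one sets $\Cog(F)(f)=F(f)$, which stays comultiplicative by naturality of $\varphi_2$ and $\varphi_0$.

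Next I would promote $\Cog(F)$ to a strong monoidal functor, with structure morphisms $\varphi_0:\ l\to F(k)$ and $\varphi_2(C,D):\ F(C)\sq F(D)\to F(C\ot D)$. The one thing to check is that these are morphisms of coalgebras. For $\varphi_0$ this is immediate, as $k$ and $l$ are the trivial unit coalgebras. For $\varphi_2$ it is exactly here that the braiding intervenes, and I expect this to be the main obstacle: the comultiplication of a tensor product of coalgebras is built from the braiding $c$, just as in the tensor product of $\Vv$-categories recalled in \seref{1}, so $\varphi_2(C,D)$ is comultiplicative precisely when $F$ intertwines the braidings of $\Vv$ and $\Ww$. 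Granting this compatibility, which is natural in the braided setting of the present section, $\Cog(F)$ is strong monoidal, and \prref{1.1} applied to $\Cog(F):\ \Cog(\Vv)\to\Cog(\Ww)$ yields the first bifunctor $F:\ {}_{\Cog(\Vv)}\Cat\to{}_{\Cog(\Ww)}\Cat$.

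For the Hopf statement, let $A$ be a Hopf $\Vv$-category with antipode $S:\ A\to A^{\rm op}$. Since $F$ respects the braiding, one has $F(A^{\rm op})=F(A)^{\rm op}$ as $\Ww$-categories --- the opposite multiplication $m^{\rm op}=m\circ c$ is transported correctly --- so $F(S)_{x,y}:=F(S_{x,y})$ is a morphism $F(A)\to F(A)^{\rm op}$ in $\Ww(X)$. I would then verify the antipode axioms \equref{2.2.1} and \equref{2.2.2} for $F(A)$ by applying $F$ to the corresponding identities for $A$ and inserting the isomorphisms $\varphi_0,\varphi_2$ where needed; this is a diagram chase of exactly the type carried out element-wise in the proof of \thref{2.3}. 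Hence $F(A)$ is again a Hopf category. Finally, since \prref{2.5} identifies the $1$-cells and $2$-cells of ${}_{\Vv}\dul{\rm HopfCat}$ with the $\Cog(\Vv)$-functors and $\Cog(\Vv)$-natural transformations, the first bifunctor restricts to these, producing the second bifunctor $F:\ {}_{\Vv}\dul{\rm HopfCat}\to{}_{\Ww}\dul{\rm HopfCat}$ and completing the proof.
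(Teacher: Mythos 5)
Your proposal is correct and follows essentially the same route as the paper: induce a strong monoidal functor $\Cog(\Vv)\to\Cog(\Ww)$ with structure maps $\varphi_2^{-1}\circ F(\Delta)$ and $\varphi_0^{-1}\circ F(\varepsilon)$, feed it into \prref{1.1} to get the first bifunctor, and transport the antipode by applying $F$ to (\ref{eq:2.2.1})--(\ref{eq:2.2.2}) and cancelling $\varphi_2\circ\varphi_2^{-1}$ and $\varphi_0\circ\varphi_0^{-1}$ via naturality. You are in fact slightly more careful than the paper in flagging that $\varphi_2(C,D)$ is a coalgebra morphism only when $F$ intertwines the braidings --- a compatibility the paper's statement and proof leave implicit.
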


\begin{proof}
$F$ induces a strong monoidal functor $F:\ \Cog(\Vv)\to \Cog(\Ww)$. For a
$\Vv$-coalgebra $C$, $F(C)$ is a $\Ww$-coalgebra. The comultiplication is
$\varphi_2^{-1}\circ F(\Delta):\ F(C)\to F(C)\ot F(C)\to F(C\ot C)$, and the counit
is $\varphi_0^{-1}\circ F(\varepsilon):\ F(C)\to F(k)\to l$.\\
Now apply \prref{1.1} to  $F:\ \Cog(\Vv)\to \Cog(\Ww)$. We obtain a bifunctor
$F:\ {}_{\Cog(\Vv)}\Cat\to {}_{\Cog(\Ww)}\Cat$. For a ${}_{\Cog(\Vv)}$-category $A$,
we have that $F(A)_{x,y}=F(A_{x,y})$, with multiplication maps
$$F(m_{x,y,z})\circ \varphi_2:\ F(A_{x,y})\ot F(A_{y,z})\to F(A_{x,y}\ot A_{y,z})\to F(A_{x,y})$$
and unit maps $F(\eta_x)\circ \varphi_0:\ l\to F(k)\to F(A)$.\\
Now let $A$ be a Hopf $\Vv$-category. We claim that the maps $F(S_{x,y}):\ F(A_{x,y})\to F(A_{y,x})$
define an antipode on $F(A)$. Let us show that \equref{2.2.1} is satisfied. Using the fact that
$\varphi_2$ is natural, we obtain that
\begin{eqnarray*}
&&\hspace*{-2cm}
F(m_{x,y,x})\circ\varphi_2\circ (F(A_{x,y})\ot F(S_{x,y}))\circ \varphi_2^{-1}\circ F(\Delta_{x,y})\\
&=& F(m_{x,y,x})\circ F(A_{x,y}\ot S_{x,y}) \circ \varphi_2\circ \varphi_2^{-1}\circ F(\Delta_{x,y})\\
&=& F(m_{x,y,x}\circ (A_{x,y}\ot S_{x,y})\circ \Delta_{x,y})\\
&\equal{\equref{2.2.1}} & F(\eta_x\circ \varepsilon_{x,y})=F(\eta_x)\circ \varphi_0\circ \varphi_0^{-1}\circ F(\varepsilon_{x,y}),
\end{eqnarray*}
as needed. The proof of \equref{2.2.2} is similar.
\end{proof}

\begin{example}\exlabel{2.7}
Consider the linearization functor $L:\ \Sets\to \Mm_k$. It is well-known that $L$ is
strong monoidal, so, by \prref{2.6}, it sends Hopf categories (which are groupoids, see \prref{4.2})
to $k$-linear Hopf categories. More precisely, consider a groupoid $G$, and let
$G_{x,y}$ be the set of maps from $y$ to $x$. Then $L(G)=A$ is defined as follows:
$$A_{x,y}=kG_{x,y}.$$
The multiplication is the obvious one: the multiplication on $G$ is extended linearly.
$kG_{x,y}$ has the structure of grouplike coalgebra: $\Delta_{x,y}(g)=
g\ot g$ and $\varepsilon_{x,y}(g)=1$ for $g\in G_{x,y}$. The antipode is given by
the formula $S_{x,y}(g)=g^{-1}\in G_{y,x}$.
\end{example}

\section{The representation category}\selabel{3}
\begin{definition}\delabel{3.1}
Let $A$ be a $\Vv$-category. A left $A$-module is an object $M$ in $\Vv(X)$ together with a family of morphisms
$$\psi=
\psi_{x,y,z}:\ A_{x,y}\ot M_{y,z}\to M_{x,z}$$
in $\Vv$ such that the following associativity and unit conditions hold:
\begin{eqnarray}
\psi_{x,y,u}\circ (A_{x,y}\ot \psi_{y,z,u})&=&\psi_{x,z,u}\circ (m_{x,y,z}\ot M_{z,u});\eqlabel{3.1.1}\\
\psi_{x,x,y}\circ (\eta_x\ot M_{x,y})&=&M_{x,y}.\eqlabel{3.1.2}
\end{eqnarray}
Let $M$ and $N$ be left $A$-modules. A morphism $\varphi:\ M\to N$ in $\Vv(X)$ is called left $A$-linear
if
\begin{equation}\eqlabel{3.1.3}
\varphi_{x,z}\circ \psi_{x,y,z}= \psi_{x,y,z}\circ (A_{x,y}\ot \varphi_{y,z}):~~A_{x,y}\ot M_{y,z}\to N_{x,z},
\end{equation}
for all $x,y,z\in X$.
\end{definition}

${}_A\Vv(X)$ will denote the category of left $A$-modules and left $A$-linear morphisms. Right $A$-modules
and $(A,B)$-bimodules are defined in a similar way, and they form categories $\Vv(X)_A$ and ${}_A\Vv(X)_B$.

\begin{proposition}\prlabel{3.2}
Let $A$ be a $\Cog(\Vv)$category. Then there is a monoidal structure on ${}_A\Vv(X)$ such that the forgetful functor
${}_A\Vv(X)\to \Vv(X)$ is monoidal.
\end{proposition}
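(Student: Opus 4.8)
The plan is to transport into the enriched setting the classical construction that makes the category of modules over a bialgebra monoidal. The ambient monoidal structure is that of $\Vv(X)$, whose tensor product $\bullet$ is computed componentwise, $(M\bullet N)_{x,y}=M_{x,y}\ot N_{x,y}$, and whose unit is $J$ with $J_{x,y}=ke_{x,y}$. Since $\Vv$ is strict, so is $\Vv(X)$, so the associativity and unit constraints downstairs are identities; the content of the proposition is therefore that $\bullet$ and $J$ carry canonical $A$-module structures and that these identity constraints lift to $A$-linear isomorphisms. Here $A$ carries, besides its $\Vv$-category structure, the comultiplications $\Delta_{x,y}$ and counits $\varepsilon_{x,y}$ of a $\Cog(\Vv)$-category (what the statement calls a $\Vv$-bicategory), together with their compatibility with $m$ and $\eta$ recorded in the diagrams \equref{2.1.1}--\equref{2.1.4}.

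First I would define, for left $A$-modules $M$ and $N$ with actions $\psi^M$ and $\psi^N$, an action on $M\bullet N$ by the composite
$$\Psi_{x,y,z}=(\psi^M_{x,y,z}\ot \psi^N_{x,y,z})\circ(A_{x,y}\ot c_{A_{x,y},M_{y,z}}\ot N_{y,z})\circ(\Delta_{x,y}\ot M_{y,z}\ot N_{y,z}),$$
sending $A_{x,y}\ot M_{y,z}\ot N_{y,z}$ to $M_{x,z}\ot N_{x,z}$; this is the enriched avatar of $h\cdot(m\ot n)=h_{(1)}m\ot h_{(2)}n$. I would then verify the module axioms \equref{3.1.1} and \equref{3.1.2} for $\Psi$. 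The unit axiom \equref{3.1.2} reduces quickly to the unit axioms of $M$ and $N$ together with the fact that $\eta_x$ is a coalgebra map, \equref{2.1.2}. For the unit object I equip $J$ with the action obtained from the counit, $A_{x,y}\ot J_{y,z}\to J_{x,z}$ via $\varepsilon_{x,y}$ (using $J_{x,y}\ot J_{y,z}=J_{x,z}$), the analogue of the trivial module; its axioms follow from $\varepsilon$ being a $\Vv$-functor, diagrams \equref{2.1.3} and \equref{2.1.4}.

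Next I would check bifunctoriality: if $\varphi\colon M\to M'$ and $\chi\colon N\to N'$ are $A$-linear, then $\varphi\bullet\chi$ is $A$-linear for the actions just defined, which is a direct chase from \equref{3.1.3} using naturality of the braiding. Finally I would verify that the strict constraints of $\Vv(X)$ are morphisms of $A$-modules: that the two $A$-actions on $(M\bullet N)\bullet P$ and $M\bullet(N\bullet P)$ agree follows from coassociativity of $\Delta_{x,y}$ together with the hexagon and naturality axioms of $c$, while $J\bullet M\cong M\cong M\bullet J$ as $A$-modules follows from the counit axioms of $\Delta_{x,y}$. Once this is in place the forgetful functor ${}_A\Vv(X)\to\Vv(X)$ is strict monoidal by construction, since $\Psi$, the action on $J$, and the constraints all lie over the corresponding data in $\Vv(X)$.

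The main obstacle is the associativity axiom \equref{3.1.1} for $\Psi$. Here one must commute the multiplication $m_{x,y,z}$ past the comultiplication, which is exactly the compatibility expressed by diagram \equref{2.1.1} (that $\Delta$ is a $\Vv$-functor, equivalently that $m$ is a coalgebra map), and then reorganize the fourfold tensor factor using coassociativity with repeated applications of the hexagon identities and naturality of $c$. Threading the braiding correctly through $A_{x,y}\ot A_{y,z}\ot M_{z,u}\ot N_{z,u}$ so that each copy of $A$ reaches its intended module factor, while tracking which instance of $\Delta$ and $m$ is applied, is the delicate bookkeeping that makes this step the heart of the argument; the remaining verifications are formal.
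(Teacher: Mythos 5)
Your construction coincides with the paper's: the diagonal action on $M\bullet N$ via $\Delta_{x,y}$, the braiding $c_{A_{x,y},M_{y,z}}$ and the two module structures, and the trivial action on $J$ via $\varepsilon_{x,y}$, with the remaining verifications (which the paper leaves to the reader) correctly identified and correctly reduced to the $\Cog(\Vv)$-category axioms \equref{2.1.1}--\equref{2.1.4}. This is essentially the same proof, worked out in somewhat more detail.
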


\begin{proof}
Let $M$ and $N$ be left $A$-modules. We have a left $A$-action on $M\ot N$ as follows:
\begin{eqnarray*}
&&\hspace*{-5cm}
(\psi_{x,y,z}\ot\psi_{x,y,z})\circ (A_{x,y} \ot c_{A_{x,y}, M_{y,z}}\ot N_{y,z})\circ (\Delta_{x,y}\ot M_{y,z}\ot N_{y,z}):\\
A_{x,y} \ot M_{y,z}\ot  N_{y,z}&\rightarrow&A_{x,y} \ot A_{x,y} \ot M_{y,z}\ot  N_{y,z}\\
&\rightarrow& A_{x,y} \ot M_{y,z}\ot A_{x,y} \ot N_{y,z}\\
&\rightarrow&  M_{x,z}\ot N_{x,z} = (M\ot N)_{x,z}.
\end{eqnarray*}
$J$ is a left $H$-module with structure morphisms
$$\varepsilon_{x,y}\ot ke_{y,z}:\ A_{x,y}\ot ke_{y,z}\to ke_{x,y}\ot ke_{y,z}= ke_{x,z}.$$
Verification of all the other details is left to the reader.
\end{proof}

\section{Duality}\selabel{3b}
\subsection{Dual $\Vv$-categories}
The notion of $\Vv$-category can be dualized. A dual $\Vv$-category $C$ consists of a class $|C|=X$
and $C\in \Vv(X)$ together with two classes of morphisms in $\Vv$, namely
$$\Delta_{x,y,z}:\ C_{x,z}\to C_{x,y}\ot C_{y,z}~~{\rm and}~~\varepsilon_x:\ C_{x,x}\to k,$$
satisfying the following coassociativity and counit conditions
\begin{eqnarray*}
&&(\Delta_{x,y,z}\ot C_{z,u})\circ \Delta_{x,z,u}= (C_{x,y}\ot \Delta_{y,z,u})\circ \Delta_{x,y,u};\\
&&(\varepsilon_x\ot C_{x,y})\circ \Delta_{x,x,y}= (C_{x,y}\ot \varepsilon_y)\circ \Delta_{x,y,y}.
\end{eqnarray*}
Dual $\Vv$-categories can be organized into a 2-category ${}^\Vv\Cat$. A 1-cell $f:\ C\to D$
between two dual $\Vv$-categories $C$ and $D$ is a dual $\Vv$-functor, and consists of the following data. For each
$x\in X=|C|$, we have $f(x)\in Y=|D|$, and for each $x,y\in X$, the morphisms $f_{x,y}:\ D_{f(x),f(y)}\to C_{x,y}$ such that
\begin{eqnarray*}
&& (f_{x,y}\ot f_{y,z})\circ \Delta_{f(x),f(y),f(z)}=\Delta_{x,y,z}\circ f_{x,z};\\
&&\varepsilon_{f(x)}=\varepsilon_x\circ f_{x,x}.
\end{eqnarray*}
Let $f,g:\ C\to D$ be dual $\Vv$-functors. A dual $\Vv$-natural transformation $\alpha:\ f\Rightarrow g$
consists of morphisms $\alpha_x:\ D_{f(x),g(x)}\to k$ in $\Vv$ such that
$$(f_{x,y}\ot \alpha_y)\circ \Delta_{f(x),f(y),g(y)}=(\alpha_x \ot g_{x,y})\circ \Delta_{f(x),g(x),g(y)},$$
for all $x,y\in X$. Dual $\Vv$-natural transformations are the 2-cells in ${}^\Vv\Cat$.\\
The composition of 1-cells goes as follows. Let $f:\ C\to D$ and $g:\ D\to E$ be
dual $\Vv$-functors. $g\circ f$ is defined by the formulas
$$(g\circ f)_{x,y}=f_{x,y}\circ g_{f(x),f(y)}:\ E_{(g\circ f)(x),(g\circ f)(y)}\to C_{x,y}.$$
Now let $f':\ C\to D$ and $g':\ D\to E$ be two more
dual $\Vv$-functors, and let $\alpha:\ f\Rightarrow f'$ and $\beta:\ g\Rightarrow g'$ be dual $\Vv$-natural transformations.
$\alpha*\beta:\ g\circ f\Rightarrow g'\circ f'$ is defined by the formulas
\begin{eqnarray*}
(\alpha*\beta)_x & = & \bigl(\beta_{f(x)}\ot (\alpha_x\circ g'_{f(x),f'(x)})\bigr)\circ \Delta_{(g\circ f)(x),(g'\circ f)(x),(g'\circ f')(x)}\\
& = & \bigl( (\alpha_x \circ g_{f(x),f'(x)} )\otimes  \beta_{f'(x)} \bigr) \circ \Delta_{(g\circ f)(x),(g\circ f')(x),(g'\circ f')(x)}
\end{eqnarray*}
Now let $f,g,h:\ C\to D$ be dual $\Vv$-functors, and let $\alpha:\ f\Rightarrow g$, $\beta:\ g\Rightarrow h$
be dual $\Vv$-natural transformations. The vertical composition $\beta\circ\alpha:\ f\Rightarrow h$ is the following:
$$(\beta\circ\alpha)_x=(\alpha_x\ot \beta_x)\circ \Delta_{f(x),g(x),h(x)}:\ B_{f(x),h(x)}\to k.$$

Let $\Vv^{\rm op}=(\Vv^{\rm op},\ot^{\rm op},k)$ be the opposite of the monoidal category $\Vv$. Recall that
$\Hom_{\Vv^{\rm op}}(M,N)=\Hom_{\Vv}(N,M)$, and that the opposite tensor product $\ot$ is given by
$M \ot^{\rm op} N= N\ot M$ and $f \ot^{\rm op} g=g\ot f$.

\begin{proposition}\prlabel{3b.1}
Let $\Vv$ be a strict monoidal category. Then the 2-categories  ${}^{\Vv}\Cat$ and ${}_{\Vv^{\rm op}}\Cat$
are 2-isomorphic.
\end{proposition}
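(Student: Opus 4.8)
The plan is to exhibit a strict 2-functor $\Phi\colon {}^{\Vv}\Cat\to{}_{\Vv^{\rm op}}\Cat$ together with a strict inverse, so that \emph{2-isomorphic} is witnessed by an honest isomorphism of 2-categories and no coherence or up-to-equivalence argument is needed. A preliminary step is to write out explicitly the axioms for $\Vv^{\rm op}$-categories, $\Vv^{\rm op}$-functors and $\Vv^{\rm op}$-natural transformations that were left to the reader at the end of \seref{1}: each is obtained from the corresponding $\Vv$-notion by reading its defining diagrams inside $\Vv^{\rm op}$, that is, by reversing every arrow, replacing ordinary composition by the opposite composition, and replacing $\ot$ by $\ot^{\rm op}$, which also reverses the order of the tensor factors. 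Note that no braiding is used anywhere, in keeping with the hypothesis that $\Vv$ is merely strict monoidal.

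On $0$-cells I would send a dual $\Vv$-category $C$ with $|C|=X$ to the $\Vv^{\rm op}$-category $\Phi(C)$ on the same class $X$ whose underlying object of $\Vv(X)$ is the opposite $C^{\rm op}$ of $C$ introduced in \seref{1}, i.e.\ $\Phi(C)_{x,y}=C_{y,x}$, with structure morphisms obtained by relabelling, $m_{x,y,z}=\Delta_{z,y,x}$ and $\eta_x=\varepsilon_x$. This index reversal is forced rather than chosen: the $\Vv^{\rm op}$-multiplication lands in $A_{y,z}\ot A_{x,y}$ while $\Delta$ lands in $C_{x,y}\ot C_{y,z}$, and only the substitution $A_{x,y}=C_{y,x}$ reconciles the two orders. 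On $1$-cells I keep the underlying map $X\to Y$ and set $\Phi(f)_{x,y}=f_{y,x}$; on $2$-cells I set $\Phi(\alpha)_x=\alpha_x$, which is well typed precisely because the swap turns $\Phi(D)_{g(x),f(x)}$ back into $D_{f(x),g(x)}$.

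I would then verify, by translating axioms through this dictionary, that $\Phi$ actually lands in ${}_{\Vv^{\rm op}}\Cat$: the coassociativity and counit identities of $C$ turn into the associativity and unit identities of $\Phi(C)$, and the dual-functor and dual-natural-transformation identities turn into their $\Vv^{\rm op}$-counterparts. Next I would check that $\Phi$ is a strict 2-functor, preserving identities, composition of $1$-cells, and both the horizontal and the vertical composition of $2$-cells; here one simply matches the composition formulas recorded in \seref{3b} against those of \seref{1} read in $\Vv^{\rm op}$, the index reversal absorbing each order- and direction-reversal. Finally, the identical recipe applied in the opposite direction, using $(\Vv^{\rm op})^{\rm op}=\Vv$, produces a strict 2-functor $\Psi\colon {}_{\Vv^{\rm op}}\Cat\to{}^{\Vv}\Cat$, and one checks on the nose that $\Psi\circ\Phi=\id$ and $\Phi\circ\Psi=\id$; this establishes the claimed 2-isomorphism.

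The work is entirely bookkeeping, and the main obstacle is to keep the \emph{two} independent index reversals straight simultaneously at every level: the one coming from passing to $\Vv^{\rm op}$ and the one coming from the relabelling $A_{x,y}=C_{y,x}$. I expect the comparison of coassociativity with associativity to be the most delicate point, since there the reversed tensor order and the full triple of indices must be tracked together, and similarly one must confirm that the reversal of composition in $\Vv^{\rm op}$ lines up exactly with the already-reversed composition formulas given for dual $\Vv$-functors and dual $\Vv$-natural transformations.
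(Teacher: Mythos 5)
Your construction is exactly the one in the paper: both send a dual $\Vv$-category $C$ to the $\Vv^{\rm op}$-category on $C^{\rm op}$ (so $A_{x,y}=C_{y,x}$, with $m$ given by the relabelled $\Delta$ and $\eta_x=\varepsilon_x$), reverse indices on $1$-cells, keep $2$-cells as they are, and obtain the inverse by the same recipe. The proposal is correct and matches the paper's (sketched) proof, merely spelling out more of the bookkeeping the paper leaves to the reader.
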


\begin{proof} (Sketch)
We will define a 2-functor $F:\ {}^{\Vv}\Cat\to {}_{\Vv^{\rm op}}\Cat$. Take a dual $\Vv$-category $C$,
with underlying class $X$,
and consider $A=C^{\rm op}$ in $\Vv(X)$. We have $\Vv$-morphisms
$$\Delta_{x,y,z}:\ C_{x,z}=A_{z,x}\to C_{x,y}\ot C_{y,z}=A_{z,y}\ot^{\rm op} A_{y,x},$$
and $\Vv^{\rm op}$-morphisms
$$m_{z,y,x}=\Delta_{x,y,z}:\ A_{z,y}\ot^{\rm op} A_{y,x}\to A_{z,x}.$$
Also $\eta_x=\varepsilon_x:\ k\to A_{x,x}=C_{x,x}$ is a $\Vv^{\rm op}$-morphism, and straightforward
computations show that this makes $A$ a $\Vv^{\rm op}$-category. We define $F(C)=A$.\\
Let $f:\ C\to D$ be a dual $\Vv$-functor, and let $F(D)=B$. For all $x,y\in X$, we have $\Vv$-morphisms
$$f_{x,y}:\ D_{f(x),f(y)}=B_{f(y),f(x)}\to C_{x,y}=A_{y,x}.$$
For all $x,y\in X$, let $g(x)=f(x)$ and $g_{y,x}=f_{x,y}$. Then
$g_{y,x}:\ A_{y,x}\to B_{f(y),f(x)}$ is a $\Vv^{\rm op}$-morphism, and standard arguments tell us
that $g:\ A\to B$ is a $\Vv^{\rm op}$-functor, and we define $F(f)=g$.\\
Finally let $f,f':\ C\to D$ be dual $\Vv$-functors and let $\alpha:\ f\Rightarrow f'$ be a dual $\Vv$-natural
transformation. For every $x\in X$, we have a $\Vv$-morphism $\alpha_x:\
B_{f'(x),f(x)}=D_{f(x),f'(x)}\to k$, and therefore a $\Vv^{\rm op}$-morphism $\alpha_x:\ k\to B_{f'(x),f(x)}
=B_{g'(x),g(x)}$. We leave it to the reader to show that this defines a $\Vv^{\rm op}$-natural
transformation $\alpha:\ g=F(f)\Rightarrow g'=F(f')$. We define $F(\alpha)=\alpha$. Standard computations
show that $F$ is a 2-functor. The inverse of $F$ is defined in a similar way.
\end{proof}

A dual $\Vv$-category with underlying class $X$ is called a dual $\Vv$-$X$-category. A dual $\Vv$-functor
$f$ between two dual $\Vv$-$X$-categories is called a dual $\Vv$-$X$-functor if $f(x)=x$, for all $x\in X$.
${}^{\Vv}\Cat(X)$ is the subcategory of ${}^{\Vv}\Cat$, consisting of dual $\Vv$-$X$-categories,
dual $\Vv$-$X$-functors and dual $\Vv$-natural transformations. As an immediate corollary of
\prref{3b.1}, we have the following result.

\begin{corollary}\colabel{3b.2}
Let $X$ be a class, and let $\Vv$ be a strict monoidal category. Then the 2-categories 
${}_{\Vv^{\rm op}}\Cat(X)$ and ${}^{\Vv}\Cat(X)$
are 2-isomorphic.
\end{corollary}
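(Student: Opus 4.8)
The plan is to restrict the 2-isomorphism $F:\ {}^{\Vv}\Cat\to {}_{\Vv^{\rm op}}\Cat$ constructed in \prref{3b.1} to the appropriate full 2-subcategories, and to check that this restriction is well defined in both directions. The essential observation is that $F$ preserves the underlying class on the nose: for a dual $\Vv$-category $C$ with $|C|=X$, the associated $\Vv^{\rm op}$-category $F(C)=C^{\rm op}$ again has underlying class $X$, since $C^{\rm op}$ is formed in $\Vv(X)$ without altering the index set. Hence $F$ sends dual $\Vv$-$X$-categories to $\Vv^{\rm op}$-$X$-categories, and the inverse 2-functor of \prref{3b.1} does likewise.

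Next I would examine the effect of $F$ on 1-cells. In the proof of \prref{3b.1}, a dual $\Vv$-functor $f$ is sent to a $\Vv^{\rm op}$-functor $g$ with $g(x)=f(x)$ for all $x\in X$ (only the index labels of the component morphisms are swapped). Consequently $f$ is the identity on objects if and only if $g$ is, so under $F$ the dual $\Vv$-$X$-functors correspond bijectively to the $\Vv^{\rm op}$-$X$-functors. The 2-cells require no further analysis: dual $\Vv$-natural transformations and $\Vv^{\rm op}$-natural transformations are already matched bijectively by $F$ in \prref{3b.1}, and the $X$-restriction does not change the class of 2-cells.

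Putting these together, $F$ restricts to a 2-functor ${}^{\Vv}\Cat(X)\to {}_{\Vv^{\rm op}}\Cat(X)$, and the inverse 2-functor of \prref{3b.1} restricts to a 2-functor in the opposite direction; since the unrestricted versions are mutually inverse, so are their restrictions. This yields the claimed 2-isomorphism. I expect no genuine obstacle here: the only points to verify are that the bijections furnished by $F$ respect the $X$-grading and the identity-on-objects condition, and both are immediate from the explicit formulas $F(C)=C^{\rm op}$ and $g(x)=f(x)$. This is exactly why the result is an immediate corollary of \prref{3b.1}.
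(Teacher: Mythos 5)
Your proposal is correct and matches the paper's approach: the paper presents this as an immediate corollary of \prref{3b.1}, obtained by restricting the 2-isomorphism to objects with underlying class $X$ and to identity-on-objects 1-cells, exactly as you do. Your explicit verification that $F$ preserves the underlying class and the condition $g(x)=f(x)$ simply spells out why the restriction is well defined.
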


If $X$ is a singleton, then the objects in ${}^{\Vv}\Cat(X)$ are $\Vv$-coalgebras. Deleting the non-unit 2-cells in
${}^{\Vv}\Cat(X)$, we obtain $\Cog(\Vv)^{\rm op}$, the opposite of the category of coalgebras. 

\subsection{Modules versus comodules}
We now consider $\Vv=(\Mm^{\rm f}_k,\ot,k)$, the category of finitely generated
projective modules over a commutative ring $k$, and its opposite $\Vv^{\rm op}=(\Mm^{\rm fop}_k,\ot^{\rm op},k)$.
It is well-known that the functor $(-)^*:\ \Mm^{\rm f}_k\to \Mm^{\rm fop}_k$ taking a module $M$ to its dual
$M^*=\Hom(M,k)$ is an equivalence of categories. Moreover, we have a strong monoidal functor
$$((-)^*, \varphi_0,\varphi_2):\ (\Mm^{\rm f}_k,\ot,k)\to (\Mm^{\rm fop}_k,\ot^{\rm op},k).$$
Let
$\varphi_0:\ k\to (k)^*=k$ be the identity map. We now construct a natural isomorphism
$$\varphi_2:\ \ot^{\rm op}\circ ((-)^*,(-)^*)\Rightarrow (-)^*\circ \ot.$$
For two finitely generated projective $k$-modules $M$ and $N$, we need an isomorphism
$$\varphi_2(M,N):\ M^*\ot^{\rm op} N^*\to (M\ot N)^*$$
in $\Mm^{\rm fop}_k$, or, equivalently, an isomorphism
$$\varphi_2(M,N):\  (M\ot N)^*\to N^*\ot M^*$$
in $\Mm^{\rm f}_k$. It is well-known that the map
$$\iota:\ N^*\ot M^*\to (M\ot N)^*,~~\lan \iota(n^*\ot m^*),m\ot n\ran=\lan n^*,n\ran \lan m^*,m\ran$$
is invertible, with inverse given by the formula
$$\iota^{-1}(\mu)=\sum_{i,j} \lan \mu,m_i\ot n_j\ran n_j^*\ot m_i^*,$$
where $\sum_i m_i\ot m_i^*$ and $\sum_j n_j\ot n_j^*$ are the finite dual bases of $M$ and $N$.
We now define $\varphi_2(M,N)$ as the inverse of $\iota$.
As $((-)^*,\varphi_0,\varphi_2)$ is strong monoidal, it follows from \prref{1.1} that we have
a biequivalence between ${}_{\Mm^{\rm f}_k}\Cat$ and ${}_{\Mm^{\rm fop}_k}\Cat$.
Applying \prref{3b.1}, we find that ${}_{\Mm^{\rm fop}_k}\Cat$ is 2-isomorphic to
${}^{\Mm^{\rm f}_k}\Cat$. Combining these two biequivalences, we obtain the following result.

\begin{theorem}\thlabel{3c.1}
Let $k$ be a commutative ring. $(-)^*$ induces a biequivalence
$${}_{\Mm^{\rm f}_k}\Cat\to {}^{\Mm^{\rm f}_k}\Cat.$$
\end{theorem}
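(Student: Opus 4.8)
The plan is to realize the asserted biequivalence as the composite of the two comparison results proved earlier in the section, applied to the dualization functor, exactly as the discussion preceding the statement prepares. In brief: the biequivalence supplied by \prref{1.1} for the strong monoidal functor $(-)^*$, followed by the 2-isomorphism of \prref{3b.1}.

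First I would confirm that $((-)^*,\varphi_0,\varphi_2):\ (\Mm^{\rm f}_k,\ot,k)\to(\Mm^{\rm fop}_k,\ot^{\rm op},k)$ is a \emph{strong monoidal equivalence}. The underlying functor is an equivalence because, on finitely generated projective modules, the evaluation morphism $M\to M^{**}$ is a natural isomorphism, which supplies a quasi-inverse. The coherence datum $\varphi_0$ is the identity and $\varphi_2$ is built from the invertible map $\iota$ (with the inverse displayed above), so both are isomorphisms; it remains to check the two coherence axioms of a monoidal functor, namely compatibility of $\varphi_2$ with the associativity constraints and with the unit constraints. These unwind to identities among the dual bases $\sum_i m_i\ot m_i^*$ and the evaluation pairing $\langle-,-\rangle$, and are routine.

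Next I would apply \prref{1.1} to this strong monoidal equivalence: the induced bifunctor ${}_{\Mm^{\rm f}_k}\Cat\to{}_{\Mm^{\rm fop}_k}\Cat$ is a biequivalence. Concretely it sends a $\Vv$-category $A$ to the $\Vv^{\rm op}$-category with hom-objects $(A_{x,y})^*$, whose multiplication and unit are obtained by applying $(-)^*$ together with $\varphi_2$ and $\varphi_0$ to those of $A$. Then I would invoke \prref{3b.1} with $\Vv=\Mm^{\rm f}_k$, which yields a 2-isomorphism ${}^{\Mm^{\rm f}_k}\Cat\cong{}_{\Mm^{\rm fop}_k}\Cat$.

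Finally I would compose the biequivalence of the previous step with the inverse of this 2-isomorphism: since a 2-isomorphism is in particular a biequivalence and biequivalences compose, the result is a biequivalence ${}_{\Mm^{\rm f}_k}\Cat\to{}^{\Mm^{\rm f}_k}\Cat$, as claimed. Under the identification of \prref{3b.1} the $\Vv^{\rm op}$-category structure on $(A_{x,y})^*$ is turned into exactly the dual $\Vv$-category structure, so the composite is indeed induced by $(-)^*$. The only non-formal point is the monoidal coherence verification in the first step; everything afterwards is the formal composition of a biequivalence with a 2-isomorphism, so that is where I expect whatever small obstacle there is to lie.
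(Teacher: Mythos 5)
Your proposal follows exactly the paper's own route: the paper establishes that $((-)^*,\varphi_0,\varphi_2)$ is a strong monoidal equivalence $(\Mm^{\rm f}_k,\ot,k)\to(\Mm^{\rm fop}_k,\ot^{\rm op},k)$, applies \prref{1.1} to obtain a biequivalence ${}_{\Mm^{\rm f}_k}\Cat\to{}_{\Mm^{\rm fop}_k}\Cat$, and then composes with the 2-isomorphism of \prref{3b.1}. Your account is correct and matches the paper's argument, down to identifying the monoidal coherence of $\varphi_2$ (built from $\iota$ and the dual bases) as the only non-formal verification.
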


Let us describe this biequivalence at the level of 0-cells. Suppose that $A$ is a
$k$-linear category, with all underlying $A_{x,y}$ finitely generated and projective.
 First we have to apply the duality functor
$(-)^*$, sending $A$ to $A^*$, with $(A^*)_{x,y}=A_{x,y}^*$.
 In order to compute the
multiplication and unit maps, we have to apply the construction sketched in the proof
of \prref{1.1}. The multiplication is the following composition in $\Mm_k^{\rm fop}$:
$$m^*_{x,y,z}\circ \varphi_2(A_{x,y},A_{y,z}):\ A^*_{y,z}\ot A^*_{x,y}\to
(A_{x,y}\ot A_{y,z})^*\to A^*_{x,z}.$$
The unit map is $\eta_x^*:\ k\to A^*_{x,x}$ in $\Mm_k^{\rm fop}$. To $A^*$, we apply
the construction performed in the proof of \prref{3b.1}, which sends $A^*$ to $C$,
with $C_{x,y}=A^*_{y,x}$. The comultiplication maps are the following maps in $\Mm_k^{\rm f}$:
$$\Delta_{z,y,x}= \varphi_2(A_{x,y},A_{y,z})\circ m^*_{x,y,z}:\
A^*_{x,z}=C_{z,x}\to A^*_{y,z}\ot A^*_{x,y}=C_{z,y}\ot C_{y,x}.$$
The counit maps are $\varepsilon_x=\eta_x^*:\ C_{x,x}=A^*_{x,x}\to k$.\\
 Let us also give a brief description of the inverse construction. 
Let $(C,\Delta,\varepsilon)$ be a dual $\Mm_k^{\rm f}$-category. We will use the following
Sweedler-Heyneman type notation: for $c\in C_{x,z}$, $\Delta_{x,y,z}(c)=
c_{(1,y)}\ot c_{(2,y)}\in C_{x,y}\ot C_{y,z}$. Let $A\in \Mm_k^{\rm fop}(X)$ be defined
as $A_{x,y}=C^*_{y,x}$. The multiplication map $m_{x,y,z}:\ A_{x,y}\ot A_{y,z}\to
A_{x,z}=C_{z,x}^*$ is defined by the formula
$$\lan ab, c\ran=\lan a,c_{(2,y)}\ran \lan b,c_{(1,y)}\ran.$$
for $a\in A_{x,y}$, $b\in A_{y,z}$, $c\in C_{z,x}$. The unit elements are
$\varepsilon_x\in C_{x,x}^*=A_{x,x}$.\\

Let $C$ be a dual $k$-linear category.
A right $C$-comodule $M$ is an object $M\in \Vv(X)$ together with a family of maps
$$\rho_{x,y,z}:\ M_{x,z}\to M_{x,y}\ot C_{y,z}$$
such that the coassociativity and counit conditions (\ref{eq:3b.1.3}-\ref{eq:3b.1.4}) are satisfied.
For $m\in M_{x,z}$, we will write
$$\rho_{x,y,z}(m)=m_{[0,y]}\ot m_{[1,y]}.$$
For all $m\in M_{x,z}$, we need that
\begin{equation}\eqlabel{3b.1.3}
m_{[0,y][0,u]}\ot m_{[0,y][1,u]}\ot m_{[1,y]}=m_{[0,u]}\ot m_{[1,u](1,y)}\ot m_{[1,u](2,y)},
\end{equation}
in $M_{x,u}\ot C_{u,y}\ot C_{y,z}$, and 
\begin{equation}\eqlabel{3b.1.4}
m_{[0,z]}\varepsilon_z(m_{[1,z]})=m.
\end{equation}

\begin{proposition}\prlabel{3b.5}
Let $k$ be a commutative ring, and
let $C$ be a dual  $k$-linear category,with underlying class $X$, and with all $C_{x,y}$ finitely generated and projective.
Let $A$ be the corresponding 
$k$-linear category. Then the categories $\Mm_{k}^{\rm fop}(X)^C$ and $\Mm_{k}^{\rm f}(X)_A$
are isomorphic.
\end{proposition}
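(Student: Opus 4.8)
The plan is to produce mutually inverse functors between $\Mm_{k}^{\rm fop}(X)^C$ and $\Mm_{k}^{\rm f}(X)_A$ that act as the identity on the underlying objects and morphisms of $\Vv(X)$, so that only the structure maps are transported back and forth; an isomorphism (not just an equivalence) of categories then follows once colinearity is matched with linearity. The engine of the whole argument is that each $C_{y,z}$ is finitely generated and projective, which gives, for every $M\in\Vv(X)$, a natural isomorphism
$$\Hom(M_{x,z},\,M_{x,y}\ot C_{y,z})\cong \Hom(M_{x,z}\ot C_{y,z}^*,\,M_{x,y})=\Hom(M_{x,z}\ot A_{z,y},\,M_{x,y}),$$
using $C_{y,z}^*=A_{z,y}$. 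Concretely, writing $\rho_{x,z,y}(m)=m_{[0,z]}\ot m_{[1,z]}$, a right $C$-comodule yields a right $A$-action by $m\cdot a=m_{[0,z]}\lan a,m_{[1,z]}\ran$ for $m\in M_{x,y}$ and $a\in A_{y,z}=C_{z,y}^*$; conversely, choosing a finite dual basis $\{c_i,c_i^*\}$ of $C_{y,z}$, a right $A$-action yields $\rho_{x,y,z}(m)=\sum_i (m\cdot c_i^*)\ot c_i\in M_{x,y}\ot C_{y,z}$ for $m\in M_{x,z}$, where $c_i^*\in C_{y,z}^*=A_{z,y}$.

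First I would check that these two assignments are inverse to one another at the level of structure maps. This is exactly the pointwise fact that comodules over a finitely generated projective coalgebra coincide with modules over its dual, and it falls out of the dual basis identities $\sum_i\lan c_i^*,-\ran c_i=\id$ and $\sum_i c_i^*\lan -,c_i\ran=\id$ applied in each $C_{y,z}$.

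Next I would translate the comodule axioms into the module axioms under this bijection. The counit condition \equref{3b.1.4} becomes the right-unit axiom $m\cdot \varepsilon_y=m$: indeed the unit element of $A_{y,y}$ is $\varepsilon_y$ (as recorded just after \thref{3c.1}), and $m\cdot\varepsilon_y=m_{[0,y]}\varepsilon_y(m_{[1,y]})$ is precisely the left side of \equref{3b.1.4}. The coassociativity condition \equref{3b.1.3} becomes the associativity $(m\cdot a)\cdot b=m\cdot(ab)$: this is the step where the multiplication of $A$ enters, and the defining formula $\lan ab,c\ran=\lan a,c_{(2,y)}\ran\lan b,c_{(1,y)}\ran$ matches, index for index, the copy of $\Delta$ that appears in \equref{3b.1.3} once both sides are paired against arbitrary $a\in A_{y,z}$, $b\in A_{z,u}$ and the dual bases are contracted. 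Finally I would verify that a morphism $\varphi$ in $\Vv(X)$ is right $C$-colinear if and only if it is right $A$-linear, again via the natural isomorphism above; since both functors fix underlying objects and morphisms, this delivers the claimed isomorphism of categories.

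The main obstacle is purely bookkeeping: keeping the index conventions straight through the successive dualizations (distinguishing $C_{y,z}$ from $C_{z,y}$, and tracking the $\mathrm{op}$ in $\Mm_k^{\rm fop}$) so that the coassociativity constraint \equref{3b.1.3} lines up term by term with the associativity axiom, and making sure a finite dual basis is fixed for each pair $(y,z)$. Once the indices are pinned down, every verification reduces to the classical finitely generated projective coalgebra/dual algebra dictionary carried out separately in each hom-object.
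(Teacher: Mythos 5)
Your proposal is correct and follows essentially the same route as the paper: the same formulas $m\cdot a=\lan a,m_{[1]}\ran m_{[0]}$ and $\rho(m)=\sum_i (m\cdot c_i^*)\ot c_i$ via the finite dual bases, the same translation of counit/coassociativity into unit/associativity using $\lan ab,c\ran=\lan a,c_{(2,y)}\ran\lan b,c_{(1,y)}\ran$, and the same verification that the two assignments are mutually inverse. The only difference is cosmetic: you spell out the colinear-versus-linear check for morphisms, which the paper leaves to the reader.
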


\begin{proof}
Let $M$ be a right $C$-comodule. We have the structure maps
$$\rho_{x,y,z}:\ M_{x,z}\rightarrow M_{x,y}\otimes C_{y,z}$$
Now we claim that $M$ is also a right $A$-module, with structure maps
$$\psi_{x,z,y}:\ M_{x,z}\ot A_{z,y}\to M_{x,y},~~
\psi_{x,z,y}(m\ot a)=ma=\lan a,m_{[1,y]}\ran m_{[0,y]}.$$
Let us first show that this right $A$-action is associative. Take $m\in M_{x,z}$,
$a\in A_{z,y}$ and $b\in A_{y,u}$. Then
\begin{eqnarray*}
(ma)b&=&\lan a,m_{[1,y]}\ran \lan b,m_{[0,y][1,u]}\ran m_{[0,y][0,u]}\\
&\equal{\equref{3b.1.3}}&
\lan a, m_{[1,u](2,y)}\ran \lan b, m_{[1,u](1,y)}\ran m_{[0,u]}\\
&=& \lan ab, m_{[1,u]}\ran m_{[0,u]}=m(ab).
\end{eqnarray*}
Now we prove the unit property. The unit element of $A_{x,x}$ is $\varepsilon_{x}$,
and for all $m\in M_{x,x}$, we have that $m\varepsilon_x=\lan \varepsilon_x,m_{[1,x]}\ran
m_{[0,x]}=m$.\\
Conversely, let $M$ be a right $A$-module. As before, let $\sum_i a_i^{y,z}\ot c_i^{y,z}
\in A_{z,y}\ot C_{y,z}$ be the finite dual basis of $C_{y,z}$. We define a right $C$-coaction
on $M$, via the structure maps
$$\rho_{x,y,z}:\ M_{x,z}\to M_{x,y}\ot C_{y,z},~~\rho_{x,y,z}(m)=\sum_i ma_i^{y,z}\ot c_i^{y,z}.$$
It is straightforward to show that this makes $M$ into a right $C$-comodule.\\
These two constructions are inverses. First we start with a right $C$-coaction on $M$.
The above construction then provides a right $A$-action on $M$, and the a new right
$C$-coaction $\tilde{\rho}$, which coincides with the original $\rho$. Indeed, for all
$m\in M_{x,z}$, we have that
\begin{eqnarray*}
\tilde{\rho}_{x,y,z}(m)&=& \sum_i ma_i^{y,z}\ot c_i^{y,z}=
\sum_i \lan a_i^{y,z},m_{[1,y]}\ran m_{[0,y]}\ot c_i^{y,z}\\
&=& m_{[0,y]}\ot m_{[1,y]}={\rho}_{x,y,z}(m).
\end{eqnarray*}
Now start from a right $A$-action on $M$. Applying the two constructions from above, we arrive
first at a right $C$-coaction on $M$, and then a new right $A$-action that coincides with the
original one: for $m\in M_{x,z}$ and $a\in A_{z,y}$, we have that
$$m\cdot a=\lan a, m_{[1,y]}\ran m_{[0,y]}=\sum_i \lan a, c_i^{y,z}\ran ma_i^{y,z}=ma.$$
\end{proof}

\subsection{Duality between Hopf categories and dual Hopf categories}
$(-)^*$ induces an equivalence of categories
$(-)^*:\ \Cog(\Mm_k^{\rm f})\to \Cog(\Mm_k^{\rm fop})$. Observing that the categories
$\Cog(\Mm_k^{\rm fop})$ and $\Alg(\Mm_k^{\rm f})^{\rm op}$ are isomorphic, we obtain an
equivalence of categories
$$(-)^*:\ \Cog(\Mm_k^{\rm f})\to \Alg(\Mm_k^{\rm f})^{\rm op}.$$
Let us compute the algebra structure on the dual $C^*$ of a coalgebra $C$. The coalgebra structure in
$\Mm_k^{\rm fop}$ is the composition
$$\varphi_2(C,C)^{-1}\circ \Delta^*:\ C^*\to (C\ot C)^*\to C^*\ot C^*,$$
in $\Mm_k^{\rm fop}$ which is the composition
$$m=\Delta^*\circ \iota:\ C^*\ot C^*\to (C\ot C)^*\to C^*.$$
It easily computed that $m$ is the opposite of the convolution product, that is
$m(c^*\ot d^*)=c^*d^*$, with $\lan c^*d^*, c\ran=\lan c^*,c_{(2)}\ran \lan d^*,c_{(1)}\ran$. Now we claim
that we have a strong monoidal equivalence
$$((-)^*, \varphi_0,\varphi_2):\ (\Cog(\Mm_k^{\rm f}),\ot , k)\to (\Alg(\Mm_k^{\rm f})^{\rm op},\ot^{\rm op},k).$$
$\varphi_0$ is again the identity on $k$, and
$$\varphi_2(C,D):\ D^*\ot C^*\to (C\ot D)^*$$
in $\Alg(\Mm_k^{\rm f})^{\rm op}$ is the inverse of the map $\iota$ defined above.
It follows from \prref{1.1} that $(-)^*$ induces a biequivalence
$$(-)^*:\ {}_{\Cog(\Mm_k^{\rm f})}\Cat\to {}_{\Alg(\Mm_k^{\rm f})^{\rm op}}\Cat.$$
We now from \prref{3b.1} that ${}_{\Alg(\Mm_k^{\rm f})^{\rm op}}\Cat$ is 2-isomorphic to
${}^{\Alg(\Mm_k^{\rm f})}\Cat$. Hence we have the following result.

\begin{theorem}\thlabel{3b.3}
Let $k$ be a commutative ring. We have a biequivalence
$${}_{\Cog(\Mm_k^{\rm f})}\Cat\to {}^{\Alg(\Mm_k^{\rm f})}\Cat.$$
\end{theorem}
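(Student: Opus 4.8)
The plan is to obtain the biequivalence by composing constructions already assembled in the discussion preceding the statement, exactly parallel to the proof of \thref{3c.1} but now carried out one level up, at coalgebras and algebras rather than at modules. The single piece of real content is the claim that the duality functor $(-)^*$ is a \emph{strong monoidal} equivalence $(\Cog(\Mm_k^{\rm f}),\ot,k)\to(\Alg(\Mm_k^{\rm f})^{\rm op},\ot^{\rm op},k)$; granting this, everything else is formal.

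First I would nail down that strong monoidal equivalence. We already know $(-)^*$ is an equivalence $\Cog(\Mm_k^{\rm f})\to\Cog(\Mm_k^{\rm fop})$, and that $\Cog(\Mm_k^{\rm fop})\cong\Alg(\Mm_k^{\rm f})^{\rm op}$, under which $C^*$ acquires the opposite convolution product. The coherence data are $\varphi_0=\id_k$ and $\varphi_2(C,D)=\iota^{-1}:\ (C\ot D)^*\to D^*\ot C^*$, the inverse of the canonical comparison $\iota$. I would verify that $\varphi_2$ is natural and satisfies the associativity and unit coherence axioms of a monoidal functor; since each $C_{x,y}$ is finitely generated projective, $\iota$ is invertible, so the functor is automatically strong.

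Next I would feed this into \prref{1.1}: a strong monoidal equivalence induces a biequivalence of the associated enriched 2-categories, giving $${}_{\Cog(\Mm_k^{\rm f})}\Cat\to{}_{\Alg(\Mm_k^{\rm f})^{\rm op}}\Cat.$$ Finally, \prref{3b.1} provides a 2-isomorphism ${}_{\Alg(\Mm_k^{\rm f})^{\rm op}}\Cat\to{}^{\Alg(\Mm_k^{\rm f})}\Cat$, and composing it with the preceding biequivalence yields the asserted biequivalence ${}_{\Cog(\Mm_k^{\rm f})}\Cat\to{}^{\Alg(\Mm_k^{\rm f})}\Cat$.

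The main obstacle is the monoidality check in the first step. One must confirm that $(-)^*$ carries the tensor product of coalgebras (whose comultiplication inserts the braiding of $\Mm_k^{\rm f}$, as in \seref{2}) to the opposite tensor product of the dual convolution algebras, compatibly with $\varphi_2$. Concretely this means dualizing $\Delta_{C\ot D}$ and checking that $\iota^{-1}$ intertwines $\Delta_{C\ot D}^*$ with the convolution structure on $C^*\ot^{\rm op}D^*$; this is precisely the classical statement that the dual of a tensor-product coalgebra is the (opposite) tensor product of the dual algebras, with finite projectivity of the $C_{x,y}$ guaranteeing invertibility of $\iota$. All remaining verifications are bookkeeping inherited from \prref{1.1} and \prref{3b.1}.
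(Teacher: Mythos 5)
Your proposal follows the paper's own argument essentially verbatim: the paper likewise establishes that $((-)^*,\varphi_0,\varphi_2)$ with $\varphi_0=\id_k$ and $\varphi_2=\iota^{-1}$ is a strong monoidal equivalence $(\Cog(\Mm_k^{\rm f}),\ot,k)\to(\Alg(\Mm_k^{\rm f})^{\rm op},\ot^{\rm op},k)$, feeds this into \prref{1.1} to obtain a biequivalence ${}_{\Cog(\Mm_k^{\rm f})}\Cat\to{}_{\Alg(\Mm_k^{\rm f})^{\rm op}}\Cat$, and concludes via the 2-isomorphism of \prref{3b.1}. The approach and the decomposition into steps are the same, so there is nothing to add.
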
 
 
 For a ${}_{\Cog(\Mm_k^{\rm f})}\Cat$-category $A$, we provide the corresponding
dual ${}^{\Alg(\Mm_k^{\rm f})}\Cat$-category $C$. First we have to apply the duality functor
$(-)^*$, sending $A$ to $A^*$, with $(A^*)_{x,y}=A_{x,y}^*$. Then we apply
the construction performed in the proof of \prref{3b.1}, which sends $A^*$ to $C$,
with $C_{x,y}=A^*_{y,x}$. From \thref{3c.1}, we already know the dual $k$-linear category structure
on $C$. 
Each $C_{x,y}=A_{y,x}^*$
is a $k$-coalgebra, with opposite convolution as multiplication, and $1_{x,y}=\varepsilon_{y,x}$
as unit element. \\
Let us also give a brief description of the inverse construction. 
Let $(C,\Delta,\varepsilon)$ be a dual $\Mm_k^{\rm f}$-category. 
The $k$-linear category structure on $A$ has already been given in the comments following
\thref{3c.1}. Each $A_{x,y}=C^*_{y,x}$ is a $k$-coalgebra
with comultiplication
$$\Delta(a)=\sum_{i,j} \lan a,c_ic_j\ran a_j^*\ot a_i^*,$$
where $\sum_i c_i\ot a_i\in C_{y,x}\ot A_{x,y}$ is the dual basis of $C_{y,x}$.\\

Let $C$ be a dual $\Vv$-category. $C$ is called a dual Hopf $\Vv$-category if
there exist morphisms $S_{x,y}:\ C_{y,x}\to C_{x,y}$ in $\Vv$ such that
\begin{eqnarray}
m_{x,y}\circ (C_{x,y}\ot S_{x,y})\circ \Delta_{x,y,x} &=& \eta_{x,y}\circ \varepsilon_x;\eqlabel{3b.1.9}\\
m_{y,x}\circ (S_{y,x}\ot C_{y,x})\circ \Delta_{x,y,x}&=& \eta_{y,x}\circ \varepsilon_x.\eqlabel{3b.1.10}
\end{eqnarray}

\begin{theorem}\thlabel{3b.4}
Let $k$ be a commutative ring. In the biequivalence from \thref{3b.3},
Hopf $\Mm_k^{\rm f}$-categories correspond to dual Hopf $\Mm_k^{\rm f}$-categories.
\end{theorem}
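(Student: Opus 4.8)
The plan is to transport the antipode of $A$ across the biequivalence of \thref{3b.3} and check that the defining axioms of a Hopf $\Vv$-category translate precisely into those of a dual Hopf $\Vv$-category. I would work throughout with $\Vv=\Mm_k^{\rm f}$ and use the explicit dictionary recorded after \thref{3b.3}: the dual category $C$ has $C_{x,y}=A_{y,x}^*$, its dual $\Vv$-category comultiplication is $\Delta_{x,y,x}=\varphi_2(A_{x,y},A_{y,x})\circ m_{x,y,x}^*$ and its counit is $\varepsilon_x=\eta_x^*$, while the algebra structure on each $C_{x,y}=A_{y,x}^*$ has multiplication $m_{x,y}$ dual to the comultiplication $\Delta_{y,x}$ of $A_{y,x}$ and unit $\eta_{x,y}=\varepsilon_{y,x}$. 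Since every $A_{x,y}$ is finitely generated projective, $(-)^*$ is an equivalence with $A_{x,y}\cong A_{x,y}^{**}$, so every construction below is reversible and it suffices to match the axioms in one direction.

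First I would define the candidate antipode on $C$ by dualizing that of $A$, setting $S^C_{x,y}=(S^A_{y,x})^*:\ C_{y,x}=A_{x,y}^*\to C_{x,y}=A_{y,x}^*$; this has exactly the signature demanded in the definition of a dual Hopf $\Vv$-category. Next I would verify the dual antipode axioms \equref{3b.1.9} and \equref{3b.1.10} by applying the contravariant functor $(-)^*$ to the Hopf axioms \equref{2.2.1} and \equref{2.2.2} and inserting the isomorphisms $\varphi_2$. Dualizing \equref{2.2.1} turns $\Delta_{x,y}$ into the convolution multiplication $m_{y,x}$ on $C_{y,x}$, turns $m_{x,y,x}$ into the comultiplication $\Delta_{x,y,x}$ on $C$, and turns $\eta_x\circ\varepsilon_{x,y}$ into $\eta_{y,x}\circ\varepsilon_x$; tracking the flip $\iota$ hidden in $\varphi_2$ shows that $\id\ot S^A_{x,y}$ becomes $S^C_{y,x}\ot\id$, so the dualized identity is exactly \equref{3b.1.10}. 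An entirely parallel computation, followed by relabelling $x\leftrightarrow y$, sends \equref{2.2.2} to \equref{3b.1.9}. Running the same argument with $(-)^*$ in the opposite direction (using reflexivity) produces a Hopf $\Vv$-category antipode out of a dual one, so the biequivalence of \thref{3b.3} restricts to a biequivalence between Hopf $\Vv$-categories and dual Hopf $\Vv$-categories.

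I expect the main obstacle to be precisely the bookkeeping of the monoidal constraint $\varphi_2$, built from the flip $\iota:\ N^*\ot M^*\to (M\ot N)^*$. It is this flip that is responsible for three simultaneous effects that must be reconciled: the index transposition sending $S^A_{x,y}$ to $S^C_{y,x}$, the appearance of the \emph{opposite} convolution product as the algebra multiplication on $C_{x,y}$, and the exchange of the left- and right-sided antipode axioms, so that \equref{2.2.1} matches \equref{3b.1.10} rather than \equref{3b.1.9}. Once the naturality square for $\varphi_2$ is written out and the identifications $C_{x,y}=A_{y,x}^*$ are substituted consistently, the two sides of each axiom coincide on the nose, and no computation beyond this diagram chase is required.
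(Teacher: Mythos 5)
Your proposal is correct and follows essentially the same strategy as the paper: dualize the antipode across the equivalence and check that the sided antipode axioms transfer, with \equref{2.2.1} pairing with \equref{3b.1.10} because of the flip built into $\varphi_2$. The only cosmetic differences are that the paper argues in the opposite direction (starting from a dual Hopf category $C$ with antipode $S$ and setting $T_{x,y}=S_{y,x}^*$ on $A$) and verifies the axiom by an explicit evaluation against elements $c\in C_{x,x}$ rather than by the naturality diagram chase you describe.
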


\begin{proof}
Assume that $C$ is a dual Hopf $\Mm_k^{\rm f}$-category with antipode $S$, and let
$A$ be the corresponding Hopf $\Mm_k^{\rm f}$-category. We claim that $T$ defined by
$$T_{x,y}=S^*_{y,x}:\ A_{x,y}\to A_{y,x}$$
is an antipode for $A$.
We have to show that \equref{2.3.1} holds. The first formula in \equref{2.3.1}
reduces to
$$a_{(1)}T_{x,y}(a_{(2)})=\lan a,1_{y,x}\ran \varepsilon_x,$$
in $A_{x,x}=C_{x,x}^*$, for all $a\in A_{x,y}$. For all $c\in C_{x,x}$, we have that
\begin{eqnarray*}
&&\hspace*{-2cm}
\lan a_{(1)}T_{x,y}(a_{(2)}),c\ran = \lan a_{(1)},c_{(2,y)}\ran \lan T_{x,y}(a_{(2)}),c_{(1,y)}\ran\\
&=& \lan a_{(1)},c_{(2,y)}\ran \lan a_{(2)}, S_{y,x}(c_{(1,y)})\ran\\
&=& \lan a, S_{y,x}(c_{(1,y)})c_{(2,y)}\ran\equal{\equref{3b.1.10}}\lan a,1_{y,x}\ran \lan \varepsilon_x,c\ran.
\end{eqnarray*}
The second formula in \equref{2.3.1} is proved in a similar way.
\end{proof}

\section{Hopf categories and Hopf group (co)algebras}\selabel{4}
Let $(\Vv,\ot,k)$ be a monoidal category. A group graded $\Vv$-algebra consists of a group $G$ together
with a family of objects $A=\{A_\sigma~|~\sigma\in G\}$ in $\Vv$ and morphisms
$$m_{\sigma,\tau}:\ A_\sigma\ot A_\tau\to A_{\sigma\tau}~~;~~\eta:\ k\to A_e$$
in $\Vv$ such that the following associativity and unit properties hold, for all $\sigma,\tau,\rho\in G$:
\begin{eqnarray*}
m_{\sigma\tau,\rho}\circ (m_{\sigma,\tau}\ot A_\rho)&=&m_{\sigma,\tau\rho}\circ (A_\sigma\ot m_{\tau,\rho});\\
m_{e,\sigma}\circ (\eta\ot A_{\sigma})&=&m_{\sigma,e}\circ (A_\sigma\ot \eta)=A_\sigma.
\end{eqnarray*}
Consider the case where $\Vv$ is the category of modules over a commutative ring $k$, and let
$A=\{A_\sigma~|~\sigma\in G\}$ be a graded algebra. Then $A=\oplus_{\sigma\in G} A_{\sigma}$ is
a $G$-graded algebra in the usual sense (see \cite{NVO} for the general theory of graded algebras), and is called a graded algebra in packed form.
Graded algebras can be organized into a 2-category ${}_\Vv\gr$.\\
A 1-cell $f:\ (G,A)\to (H,B)$ consists of a 
a group morphism $f:\ G\to H$ together with a family of morphisms $f_\sigma:\ A_\sigma\to B_{f(\sigma)}$
in $\Vv$ such that $f_{\sigma\tau}\circ m_{\sigma,\tau}=m_{f(\sigma),f(\tau)}\circ (f_\sigma\ot f_\tau)$ and
$f_e\circ\eta=\eta$.\\
Let $f,g:\ (G,A)\to (H,B)$ be 1-cells; a 2-cell $\alpha:\ f\Rightarrow g$ consists of a family of morphisms
$\alpha_\sigma:\ k\to B_{g(\sigma)^{-1}f(\sigma)}$ such that the following diagrams commute:
$$\xymatrix{
A_{\sigma^{-1}\tau}\ar[d]_{\alpha_\sigma\ot f_{\sigma^{-1}\tau}}
\ar[rr]^(.40){g_{\sigma^{-1}\tau}\ot \alpha_\tau}&&
B_{g(\sigma)^{-1}g(\tau)}\ot B_{g(\tau)^{-1}f(\tau)}\ar[d]^{m_{g(\sigma)^{-1}g(\tau),g(\tau)^{-1}f(\tau)}}\\
B_{g(\sigma)^{-1}f(\sigma)}\ot B_{f(\sigma)^{-1}f(\tau)}\ar[rr]^(.60){m_{g(\sigma)^{-1}f(\sigma), f(\sigma)^{-1}f(\tau)}}
&&B_{g(\sigma)^{-1}f(\tau)}
}$$
We have the dual notion of graded coalgebra. A group graded coalgebra in $\Vv$ consists of a group $G$ together
with a family of objects $C=\{C_\sigma~|~\sigma\in C\}$ in $\Vv$ and morphisms
$$\Delta_{\sigma,\tau}:\ C_{\sigma\tau}\to C_\sigma\ot C_\tau~~;~~
\varepsilon:\ C_e\to k$$
such that
\begin{eqnarray*}
(\Delta_{\sigma,\tau}\ot C_\rho)\circ \Delta_{\sigma\tau,\rho}&=&(C_\sigma\ot \Delta_{\tau,\rho})\circ \Delta_{\sigma,\tau\rho}\\
(\varepsilon\ot C_\rho)\circ \Delta_{e,\sigma}&=&(C_\sigma\ot \varepsilon)\circ \Delta_{\sigma,e}=C_\sigma.
\end{eqnarray*}
Let $\Vv=\Mm_k$, and suppose that $G$ is a finite group. If $C$ is a $G$-graded coalgebra, then
$\oplus_{\sigma\in G} C_\sigma$ is a $G$-graded coalgebra in the sense of \cite{NT}.\\
Graded coalgebras can be organized into a 2-category ${}^\Vv\gr$.\\
A 1-cell $f:\ (G,C)\to (H,D)$ is a morphism of graded coalgebras. This consists of a 
a group morphism $f:\ G\to H$ together with a family of morphisms $f_\sigma:\ D_{f(\sigma)}\to C_\sigma$ such that
$(f_\sigma\ot f_\tau)\circ \Delta_{f(\sigma),f(\tau)}=\Delta_{\sigma,\tau}\circ f_{\sigma\tau}$ and $\varepsilon\circ f_e=\varepsilon$.\\
Now let $f,g:\ C\to D$ be 1-cells. A 2-cell $\alpha:\ f\rightarrow g$ consists of a family of morphisms
$\alpha_\sigma:\ D_{f(\sigma)^{-1}g(\sigma)}\to k$ such that
$$(f_{\sigma^{-1}\tau}\ot \alpha_\tau)\circ \Delta_{f(\sigma)^{-1}f(\tau),f(\tau)^{-1}g(\tau)}=(\alpha_\sigma\ot g_{\sigma^{-1}\tau})\circ
\Delta_{f(\sigma)^{-1}g(\sigma),g(\sigma)^{-1}g(\tau)}.$$

\begin{proposition}\prlabel{4.1}
Let $\Vv$ be a strict monoidal category. Then the 2-categories  ${}^{\Vv}\gr$ and ${}_{\Vv^{\rm op}}\gr$
are 2-isomorphic.
\end{proposition}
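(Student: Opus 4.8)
The plan is to imitate the proof of \prref{3b.1}, replacing the diagonal indexing by a class $X$ with the homogeneous grading by a group $G$. First I would construct a 2-functor $F\colon {}^{\Vv}\gr\to {}_{\Vv^{\rm op}}\gr$ and then exhibit an explicit inverse. On $0$-cells, a $G$-graded coalgebra $(G,C)$ with $\Delta_{\sigma,\tau}\colon C_{\sigma\tau}\to C_\sigma\ot C_\tau$ and $\varepsilon\colon C_e\to k$ is sent to the $G$-graded object $A$ in $\Vv^{\rm op}$ defined by $A_\sigma=C_{\sigma^{-1}}$. Since in $\Vv^{\rm op}$ a multiplication $m_{\sigma,\tau}\colon A_\sigma\ot^{\rm op}A_\tau\to A_{\sigma\tau}$ is exactly a $\Vv$-morphism $A_{\sigma\tau}\to A_\tau\ot A_\sigma=C_{\tau^{-1}}\ot C_{\sigma^{-1}}$, I would set $m_{\sigma,\tau}=\Delta_{\tau^{-1},\sigma^{-1}}\colon C_{\tau^{-1}\sigma^{-1}}\to C_{\tau^{-1}}\ot C_{\sigma^{-1}}$, using that $(\sigma\tau)^{-1}=\tau^{-1}\sigma^{-1}$, and take as unit $\eta=\varepsilon\colon C_e\to k$ read as a $\Vv^{\rm op}$-morphism $k\to A_e$. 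A direct check shows that coassociativity and the counit axioms for $C$ become, respectively, the associativity and unit axioms for $A$ in $\Vv^{\rm op}$.

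On $1$-cells, a morphism of graded coalgebras $f\colon (G,C)\to (H,D)$, consisting of a group morphism $f\colon G\to H$ and maps $f_\sigma\colon D_{f(\sigma)}\to C_\sigma$, is sent to the morphism $F(f)$ with the same underlying group morphism and component maps $F(f)_\sigma=f_{\sigma^{-1}}$, now regarded as $\Vv^{\rm op}$-morphisms $A_\sigma\to B_{f(\sigma)}$; here I use $f(\sigma^{-1})=f(\sigma)^{-1}$ together with $A_\sigma=C_{\sigma^{-1}}$ and $B_\sigma=D_{\sigma^{-1}}$. On $2$-cells, a graded-coalgebra $2$-cell $\alpha\colon f\Rightarrow g$ is a family $\alpha_\sigma\colon D_{f(\sigma)^{-1}g(\sigma)}\to k$; since $B_{g(\sigma)^{-1}f(\sigma)}=D_{(g(\sigma)^{-1}f(\sigma))^{-1}}=D_{f(\sigma)^{-1}g(\sigma)}$, the very same morphisms, read in $\Vv^{\rm op}$, give a family $k\to B_{g(\sigma)^{-1}f(\sigma)}$, i.e.\ a graded-algebra $2$-cell $F(\alpha)\colon F(f)\Rightarrow F(g)$. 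Translating the naturality square of $\alpha$ through $\Vv^{\rm op}$ then yields exactly the compatibility condition required of $F(\alpha)$.

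Finally I would verify that $F$ respects horizontal and vertical composition of $1$- and $2$-cells — this is routine once one unwinds the definitions in $\Vv^{\rm op}$ — and that the analogous construction in the opposite direction, replacing $\Vv^{\rm op}$ by $(\Vv^{\rm op})^{\rm op}=\Vv$ and again relabelling the grading by inverses, is a strict two-sided inverse of $F$. The main obstacle, as in \prref{3b.1}, is purely bookkeeping: because $\Vv$ carries no braiding, the order of the tensor factors is rigidly fixed by $\ot^{\rm op}$, and this is precisely what forces the relabelling $A_\sigma=C_{\sigma^{-1}}$ throughout. One must therefore keep the inversion of the grading group consistent across the multiplication, the $1$-cell components and the index $f(\sigma)^{-1}g(\sigma)$ occurring in the $2$-cells, so that all three axiom translations land on the correct homogeneous components.
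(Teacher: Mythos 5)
Your construction agrees with the paper's proof in every detail: the same relabelling $A_\sigma=C_{\sigma^{-1}}$ on $0$-cells with $m_{\sigma,\tau}=\Delta_{\tau^{-1},\sigma^{-1}}$, the same components $f_{\sigma^{-1}}$ on $1$-cells, and the same reinterpretation of $\alpha_\sigma\colon D_{f(\sigma)^{-1}g(\sigma)}\to k$ as a $\Vv^{\rm op}$-morphism $k\to B_{g(\sigma)^{-1}f(\sigma)}$ on $2$-cells. The paper likewise presents this as an adaptation of the proof of \prref{3b.1} and leaves the remaining verifications to the reader, so your proposal is correct and essentially identical to the paper's argument.
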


\begin{proof}
The proof is similar to the proof of \prref{3b.1}. We will describe the 2-functor $F:\ {}^{\Vv}\gr$ and ${}_{\Vv^{\rm op}}\gr$.
Let $(G,C)$ be a graded coalgebra, and let $F(G,C)=(G,A)$, with $A_\sigma=C_{\sigma^{-1}}$. The multiplication map
$m_{\sigma,\tau}:\ A_\sigma\ot^{\rm op} A_\tau\to A_{\sigma\tau}$ in $\Vv^{\rm op}$ is given by
$\Delta_{\tau^{-1},\sigma^{-1}} C_{\tau^{-1}\sigma^{-1}}\to C_{\tau^{-1}}\ot C_{\sigma^{-1}}$ in $\Vv$.\\
Let $f:\ (G,C)\to (H,D)$ be a morphism of graded coalgebras. We define $F(f)=g:\ F(G,C)=(G,A)\to F(H,D)=(H,B)$ as follows:
$g(\sigma)=\sigma$, for all $\sigma\in G$, and $g_\sigma : A_{\sigma}\to B_{f(\sigma)}$ in $\Vv^{\rm op}$ is the map
$f_{\sigma^{-1}}:\ D_{f(\sigma)^{-1}}=B_{f(\sigma)}\to C_{\sigma^{-1}}=A_\sigma$ in $\Vv$.\\
Let $f,f':\ (G,C)\to (H,D)$ be morphisms of graded coalgebras, and let $\alpha:\ f\Rightarrow f'$ be a 2-cell in ${}^{\Vv}\gr$.
We have morphisms $\alpha_\sigma:\ D_{f(\sigma)^{-1}f'(\sigma)}\to k$ in $\Vv$, which are also morphisms
$\alpha_\sigma:\ k\to B_{f'(\sigma)^{-1}f(\sigma)}$ in $\Vv^{\rm op}$, defining a 2-cell $F(f)\Rightarrow F(f')$ in ${}_{\Vv^{\rm op}}\gr$.
\end{proof}

\begin{proposition}\prlabel{4.2}
Let $\Vv$ be a strict monoidal category. We have 2-functors $K:\ {}_{\Vv}\gr\to {}_{\Vv}\Cat$ and $H:\ {}^{\Vv}\gr\to {}^{\Vv}\Cat$.
\end{proposition}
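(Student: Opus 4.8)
The plan is to construct $K$ by hand and then obtain $H$ from it by the two dualities already established, namely \prref{4.1} and \prref{3b.1}. For $K$ on $0$-cells, given a graded $\Vv$-algebra $(G,A)$ I would take the underlying class to be $G$ itself and set
$$K(G,A)_{\sigma,\tau}=A_{\sigma^{-1}\tau},\quad m_{\sigma,\tau,\rho}=m_{\sigma^{-1}\tau,\,\tau^{-1}\rho},\quad \eta_\sigma=\eta,$$
for $\sigma,\tau,\rho\in G$; note that $K(G,A)_{\sigma,\sigma}=A_e$, so the unit $\eta:\ k\to A_e$ indeed lands in $A_{\sigma,\sigma}$. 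The associativity axiom \equref{1.1} and the unit axiom \equref{1.2} for $K(G,A)$ are then literally the associativity and unit axioms of the graded algebra $A$ after substituting $\sigma^{-1}\tau,\ \tau^{-1}\rho,\ \rho^{-1}\mu$ for the grading indices; this is the whole point of the indexing choice $A_{\sigma,\tau}=A_{\sigma^{-1}\tau}$, which turns a grading group into a pair of objects.

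For the $1$- and $2$-cells I would keep the object assignment of a morphism $f:\ (G,A)\to(H,B)$ as the group homomorphism $f:\ G\to H$ and set $K(f)_{\sigma,\tau}=f_{\sigma^{-1}\tau}$. This is well typed precisely because $f$ is a homomorphism, so that $f(\sigma)^{-1}f(\tau)=f(\sigma^{-1}\tau)$ and hence $f_{\sigma^{-1}\tau}:\ A_{\sigma^{-1}\tau}\to B_{f(\sigma^{-1}\tau)}=B_{f(\sigma),f(\tau)}$; the $\Vv$-functor axioms then reduce, by the same substitution, to the compatibility of $f$ with multiplication and unit. For a $2$-cell $\alpha:\ f\Rightarrow g$ I would reread the family $\alpha_\sigma:\ k\to B_{g(\sigma)^{-1}f(\sigma)}=B_{g(\sigma),f(\sigma)}$ as $K(\alpha)_\sigma$; the key observation is that the square defining a $2$-cell in ${}_\Vv\gr$ is, term by term, the square defining a $\Vv$-natural transformation in \seref{1} under the correspondence $x=\sigma$, $y=\tau$. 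It then remains to check that $K$ respects horizontal composition, vertical composition and identities of $1$- and $2$-cells, which is once more a matter of relabelling indices.

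For $H$ I would avoid repeating the computation: applying the construction of $K$ to the monoidal category $\Vv^{\rm op}$ yields a $2$-functor $K_{\Vv^{\rm op}}:\ {}_{\Vv^{\rm op}}\gr\to{}_{\Vv^{\rm op}}\Cat$, and I would define $H$ as the composite of the $2$-isomorphism ${}^{\Vv}\gr\cong{}_{\Vv^{\rm op}}\gr$ of \prref{4.1}, followed by $K_{\Vv^{\rm op}}$, followed by the $2$-isomorphism ${}_{\Vv^{\rm op}}\Cat\cong{}^{\Vv}\Cat$ of \prref{3b.1}. Unwinding these identifications shows that $H$ sends a graded coalgebra $(G,C)$ to the dual $\Vv$-category with $C_{\sigma,\tau}=C_{\sigma^{-1}\tau}$ and comultiplication $\Delta_{\sigma,\tau,\rho}=\Delta_{\sigma^{-1}\tau,\,\tau^{-1}\rho}$, exactly the counit-side mirror of the recipe for $K$.

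I do not expect any genuine obstacle: the entire argument is systematic index bookkeeping, and the only real decision is committing to the convention $A_{\sigma,\tau}=A_{\sigma^{-1}\tau}$ (and its dual) and using it consistently. The one place where a convention slip would actually bite is the horizontal composition of $2$-cells, where the identity $f(\sigma)^{-1}f(\tau)=f(\sigma^{-1}\tau)$ must be invoked at precisely the right spots; that is where I would concentrate the verification.
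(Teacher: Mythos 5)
Your construction of $K$ coincides with the paper's: the same assignment $K(G,A)_{\sigma,\tau}=A_{\sigma^{-1}\tau}$, $m_{\sigma,\tau,\rho}=m_{\sigma^{-1}\tau,\tau^{-1}\rho}$, $\eta_\sigma=\eta$, the same reindexing $K(f)_{\sigma,\tau}=f_{\sigma^{-1}\tau}$ on $1$-cells, and the same rereading of $\alpha_\sigma:\ k\to B_{g(\sigma)^{-1}f(\sigma)}=K(B)_{g(\sigma),f(\sigma)}$ on $2$-cells, with the verification reduced to the substitution $a=\sigma^{-1}\tau$, $b=\tau^{-1}\rho$, etc. (every tuple of grading indices arises this way, so nothing is lost). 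Where you genuinely diverge is in the treatment of $H$: the paper simply asserts that $H$ is ``constructed in a similar way'' and records only $H(C)_{\sigma,\tau}=C_{\sigma^{-1}\tau}$, whereas you manufacture $H$ as the composite of the $2$-isomorphism ${}^{\Vv}\gr\cong{}_{\Vv^{\rm op}}\gr$ of \prref{4.1}, the functor $K_{\Vv^{\rm op}}$, and the inverse of the $2$-isomorphism ${}_{\Vv^{\rm op}}\Cat\cong{}^{\Vv}\Cat$ of \prref{3b.1}. Unwinding this composite does give $H(C)_{\sigma,\tau}=C_{\sigma^{-1}\tau}$ with $\Delta_{\sigma,\tau,\rho}=\Delta_{\sigma^{-1}\tau,\tau^{-1}\rho}$ (the two opposite-reversals, one from \prref{4.1} via $A_\sigma=C_{\sigma^{-1}}$ and one from \prref{3b.1} via $C_{x,z}=A_{z,x}$, cancel to produce exactly this formula), so your $H$ agrees with the paper's. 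What your route buys is that the $2$-functoriality of $H$ comes for free from that of $K$ and the two already-established $2$-isomorphisms, at the cost of having to unwind the identifications to see the explicit formula; the paper's route gives the formula immediately but leaves the dual verification implicit. Both are correct, and your version is arguably the more economical one given that \prref{4.1} and \prref{3b.1} are already in place.
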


\begin{proof}
Let $A$ be a $G$-graded algebra. We define a $\Vv$-category $K(A)=K(G,A)$ as follows. The underlying class is $G$, and
$K(A)_{\sigma,\tau}=A_{\sigma^{-1}\tau}$. The multiplication maps are
\begin{eqnarray*}
&&\hspace*{-15mm}
m_{\sigma,\rho,\tau}= m_{\sigma^{-1}\rho, \rho^{-1}\tau}\\
&:& K(A)_{\sigma,\rho}=A_{\sigma^{-1}\rho}\ot K(A)_{\rho,\tau}=A_{\rho^{-1}\tau}\to K(A)_{\sigma,\tau}=A_{\sigma^{-1}\tau},
\end{eqnarray*}
and the unit maps are $\eta_x=\eta:\ k\to A_e=A_{\sigma,\sigma}$.\\
Let $f:\ (G,A)\to (H,B)$ be a morphism of graded algebras. $K(f)=g:\ K(G,A)\to K(H,B)$ is then defined as follows.
$g(\sigma)=f(\sigma)$, for all $\sigma\in G$, and $g_{\sigma,\tau}=f_{\sigma^{-1}\tau}:\ K(A)_{\sigma,\tau}=A_{\sigma^{-1}\tau}
\to K(B)_{f(\sigma),f(\tau)}=B_{f(\sigma)^{-1}f(\tau)}$.\\
Now let $\alpha:\ f\Rightarrow f'$ be a 2-cell in ${}_{\Vv}\gr$. We have morphisms $\alpha_\sigma:\ k\to
B_{g(\sigma)^{-1}f(\sigma)}=K(B)_{g(\sigma ),f(\sigma)}$, and these also define a 2-cell $g\Rightarrow g'$ in ${}_{\Vv}\Cat$.\\
The 2-functor $H:\ {}^{\Vv}\gr\to {}^{\Vv}\Cat$ is constructed in a similar way. Let us just mention that, for a $G$-graded
coalgebra $C$, $H(C)_{\sigma,\tau}=C_{\sigma^{-1}\tau}$.
\end{proof}

Let $\Vv$ be a braided (strict) monoidal category. We can consider graded coalgebras in $\Alg(\Vv)$ and graded algebras
in $\Cog(\Vv)$. A graded coalgebra in $\Alg(\Vv)$ is a graded coalgebra $C$ in $\Vv$, such that every $C_\sigma$ is
an algebra in $\Vv$, and the comultiplication and counit morphisms $\Delta_{\sigma,\tau}$ and $\varepsilon$ are
algebra maps. Graded coalgebras in $\Alg(\Vv)$ are known in the literature as semi-Hopf group coalgebras.
They appeared in \cite{T} (see also \cite{T2}), and a systematic algebraic study was initiated in \cite{V}.\\
In a similar way, a graded algebra in $\Cog(\Vv)$ is a graded algebra $A$ in $\Vv$ such that every $A_\sigma$ is a
coalgebra in $\Vv$, and the multiplication and counit morphisms $m_{\sigma,\tau}$ and $\eta$ are coalgebra morphisms.
In the literature, this is also called a semi-Hopf group algebra.\\
This provides us with a new categorical interpretation of semi-Hopf group algebras and coalgebras. We also obtain
that semi-Hopf group algebras (resp. coalgebras) can be organized into a 2-category ${}_{\Cog(\Vv)}\gr$
(resp. ${}^{\Alg(\Vv)}\gr$).
Note that a different interpretation,
where group algebras and coalgebras appear as bialgebras in a suitable symmetric monoidal category was given
by the second author and De Lombaerde in \cite{CL}.\\
Recall that a semi-Hopf group coalgebra $C$ is called a Hopf group coalgebra if there exist morphisms $S_\sigma:\
C_{\sigma^{-1}}\to C_\sigma$ such that
$$m_\sigma\circ (C_\sigma\ot S_\sigma)\circ \Delta_{\sigma,\sigma^{-1}}=
m_\sigma\circ ( S_\sigma\ot C_\sigma)\circ \Delta_{\sigma^{-1},\sigma}=\eta_\sigma\circ \varepsilon.$$
A semi-Hopf group algebra $A$ is called a Hopf group algebra if there exist morphisms $S_\sigma:\
A_{\sigma}\to A_{\sigma^{-1}}$ such that
$$m_{\sigma,\sigma^{-1}}\circ (A_\sigma\ot S_\sigma)\circ \Delta_\sigma=
m_{\sigma^{-1},\sigma}\circ (S_\sigma\ot A_\sigma)\circ \Delta_\sigma=\eta\circ \varepsilon_\sigma.$$

\begin{proposition}\prlabel{4.3}
Let $\Vv$ be a braided strict monoidal category. We have 2-functors $K:\ {}_{\Cog(\Vv)}\gr\to {}_{\Cog(\Vv)}\Cat$ and 
$K:\ {}^{\Alg(\Vv)}\gr\to {}^{\Alg(\Vv)}\Cat$. The first functor sends Hopf group algebras to Hopf $\Vv$-categories,
and the second one sends Hopf group coalgebras to dual Hopf $\Vv$-categories.
\end{proposition}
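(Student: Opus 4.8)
The plan is to deduce both 2-functors directly from \prref{4.2}. Since $\Vv$ is braided, $\Cog(\Vv)$ and $\Alg(\Vv)$ are again strict monoidal categories, so we may apply the constructions $K$ and $H$ of \prref{4.2} verbatim, now with the base monoidal category $\Vv$ replaced by $\Cog(\Vv)$, respectively $\Alg(\Vv)$. A graded algebra in $\Cog(\Vv)$ is precisely a semi-Hopf group algebra, and $K$ sends it to an object of ${}_{\Cog(\Vv)}\Cat$; dually, a graded coalgebra in $\Alg(\Vv)$ is a semi-Hopf group coalgebra, and $H$ sends it to an object of ${}^{\Alg(\Vv)}\Cat$. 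In both cases the underlying object has $A_{\sigma^{-1}\tau}$ at the slot $(\sigma,\tau)$, and the 2-functoriality is inherited from \prref{4.2} at no extra cost. This settles the first assertion.

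It remains to transport the antipode. Let $A$ be a Hopf group algebra, with group antipodes $S_\sigma:\ A_\sigma\to A_{\sigma^{-1}}$. On $K(A)$ we set $S_{\sigma,\tau}=S_{\sigma^{-1}\tau}$; since $(\sigma^{-1}\tau)^{-1}=\tau^{-1}\sigma$, this is a morphism $K(A)_{\sigma,\tau}=A_{\sigma^{-1}\tau}\to A_{\tau^{-1}\sigma}=K(A)_{\tau,\sigma}$, so that $S$ is a morphism $K(A)\to K(A)^{\rm op}$ in $\Vv(X)$, as required by \deref{2.2}. Recall from \prref{4.2} that the multiplication of $K(A)$ is $m_{\sigma,\tau,\sigma}=m_{\sigma^{-1}\tau,\tau^{-1}\sigma}$ (and $m_{\tau,\sigma,\tau}=m_{\tau^{-1}\sigma,\sigma^{-1}\tau}$), while the coalgebra structure of the slot $K(A)_{\sigma,\tau}$ is nothing but the coalgebra structure of $A_{\sigma^{-1}\tau}$; thus $\Delta_{\sigma,\tau}$, $\varepsilon_{\sigma,\tau}$ are the comultiplication and counit of $A_{\sigma^{-1}\tau}$, and $\eta_\sigma=\eta$.

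Verifying the antipode axioms of \deref{2.2} is now a pure reindexing. Writing $\rho=\sigma^{-1}\tau$, the identity \equref{2.2.1} becomes $m_{\rho,\rho^{-1}}\circ(A_\rho\ot S_\rho)\circ\Delta_\rho=\eta\circ\varepsilon_\rho$ and \equref{2.2.2} becomes $m_{\rho^{-1},\rho}\circ(S_\rho\ot A_\rho)\circ\Delta_\rho=\eta\circ\varepsilon_\rho$; as $\rho$ ranges over $G$, these are exactly the two defining identities of a Hopf group algebra, so $K(A)$ is a Hopf $\Vv$-category. The dual statement is entirely parallel: for a Hopf group coalgebra $C$ with antipodes $S_\sigma:\ C_{\sigma^{-1}}\to C_\sigma$, one sets $S_{\sigma,\tau}=S_{\sigma^{-1}\tau}:\ H(C)_{\tau,\sigma}=C_{\tau^{-1}\sigma}\to C_{\sigma^{-1}\tau}=H(C)_{\sigma,\tau}$, which is of the type prescribed for the antipode of a dual Hopf $\Vv$-category. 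Here the algebra structure on the slot $H(C)_{\sigma,\tau}$ is that of the $\Vv$-algebra $C_{\sigma^{-1}\tau}$, and the comultiplication $\Delta_{\sigma,\tau,\sigma}$ of the dual category $H(C)$ is the graded comultiplication $\Delta_{\sigma^{-1}\tau,\tau^{-1}\sigma}$ of $C$. Putting $\rho=\sigma^{-1}\tau$, the axiom \equref{3b.1.9} becomes $m_\rho\circ(C_\rho\ot S_\rho)\circ\Delta_{\rho,\rho^{-1}}=\eta_\rho\circ\varepsilon$, and \equref{3b.1.10} (after relabelling the dummy $\rho\mapsto\rho^{-1}$, which again ranges over $G$) becomes $m_\rho\circ(S_\rho\ot C_\rho)\circ\Delta_{\rho^{-1},\rho}=\eta_\rho\circ\varepsilon$, the two defining identities of a Hopf group coalgebra.

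The genuinely delicate point is purely bookkeeping: one must correctly match each structure map of $K(A)$ and $H(C)$ (multiplication, comultiplication, unit, counit) with the graded datum it comes from, and keep track of the group elements through the substitution $\rho=\sigma^{-1}\tau$. Once this matching is set up, no computation remains, since each Hopf-category antipode axiom is \emph{literally} a Hopf-group axiom after the reindexing. I therefore expect the whole proof to consist of recording the correspondence of structure maps, invoking \prref{4.2} for 2-functoriality, and reducing the antipode transfer to the one-line substitution above.
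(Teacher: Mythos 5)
Your proposal is correct and follows the same route as the paper: the 2-functors are obtained by applying \prref{4.2} with base category $\Cog(\Vv)$, resp.\ $\Alg(\Vv)$, and the antipode is transported by the formula $S_{\sigma,\tau}=S_{\sigma^{-1}\tau}$, exactly as in the paper's (much terser) proof. Your explicit reindexing check via $\rho=\sigma^{-1}\tau$, including the relabelling $\rho\mapsto\rho^{-1}$ in the second dual axiom, is a correct spelling-out of what the paper dismisses as ``straightforward.''
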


\begin{proof}
The first statement is an immediate corollary of \prref{4.2}. The proof of the second statement is straightforward.
Let $A$ be a Hopf group algebra. $K(S)_{\sigma,\tau}=S_{\sigma^{-1}\tau}:\ K(A)_{\sigma,\tau}=A_{\sigma^{-1}\tau}
\to K(A)_{\tau,\sigma}=A_{\tau^{-1}\sigma}$ makes $K(A)$ into a Hopf $\Vv$-category.
\end{proof}

\section{Hopf categories and weak Hopf algebras}\selabel{5}
Let $A$ be a $k$-linear Hopf category, with $|A|=X$ a finite set, and consider
$$A=\oplus_{x,y\in X} A_{x,y}.$$
We define a multiplication on $A$ in the usual way: for $h\in A_{x,y}$ and $k\in A_{z,u}$,
the product of $hk$ is the image of $h\ot k$ under the map $m_{x,y,u}: A_{x,y}\ot A_{y,u}\to A_{x,u}$ if
$y=z$, and $hk=0$ if $y\neq z$. This multiplication is extended linearly to the whole of $A$.
Then $A$ is a $k$-algebra with unit $1=\sum_{x\in X} 1_x$, where $1_x$ is the identity
morphism $x\to x$.\\
Now we define $\Delta:\ A\to A\ot A$, $\varepsilon:\ A\to k$, $S:\ A\to A$ in such a way that
their restrictions to $A_{x,y}$ are respectively $\Delta_{x,y}$, $\varepsilon_{x,y}$ and
$S_{x,y}$.

\begin{proposition}\prlabel{5.1}
Let $A$ be a $k$-linear Hopf category, with $|A|=X$ a finite set. Then
 $A=\oplus_{x,y\in X} A_{x,y}$ is a weak Hopf algebra.
 \end{proposition}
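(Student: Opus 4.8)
The plan is to show that $A=\oplus_{x,y\in X}A_{x,y}$, with the packed multiplication and unit $1=\sum_{x\in X}1_x$ described above, the comultiplication $\Delta=\oplus_{x,y}\Delta_{x,y}$, the counit $\varepsilon=\oplus_{x,y}\varepsilon_{x,y}$ and the map $S=\oplus_{x,y}S_{x,y}$, satisfies the axioms of a weak bialgebra with antipode in the sense of B\"ohm--Nill--Szlach\'anyi. So I would first recall those axioms: on top of the algebra and coalgebra structures one needs multiplicativity of $\Delta$, the weak comultiplicativity of the unit $(\Delta\ot\id)\Delta(1)=(\Delta(1)\ot 1)(1\ot\Delta(1))=(1\ot\Delta(1))(\Delta(1)\ot 1)$, the weak multiplicativity of the counit $\varepsilon(hkl)=\varepsilon(hk_{(1)})\varepsilon(k_{(2)}l)$, and the three antipode conditions $h_{(1)}S(h_{(2)})=\varepsilon_t(h)$, $S(h_{(1)})h_{(2)}=\varepsilon_s(h)$ and $S(h_{(1)})h_{(2)}S(h_{(3)})=S(h)$, where $\varepsilon_t(h)=\varepsilon(1_{(1)}h)1_{(2)}$ and $\varepsilon_s(h)=1_{(1)}\varepsilon(h1_{(2)})$.

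The algebra structure (associativity and unitality) is immediate from \equref{1.1} and \equref{1.2}, using that $A_{x,y}A_{z,u}=0$ unless $y=z$; coassociativity and the counit law for $\Delta$ and $\varepsilon$ hold because each $(A_{x,y},\Delta_{x,y},\varepsilon_{x,y})$ is a coalgebra in $\Vv$. The observation that makes the result genuinely \emph{weak} is that by \equref{2.1.2} one has $\Delta_{x,x}(1_x)=1_x\ot 1_x$, hence $\Delta(1)=\sum_{x}1_x\ot 1_x$, which differs from $1\ot 1=\sum_{x,y}1_x\ot 1_y$ as soon as $|X|>1$. Multiplicativity of $\Delta$, i.e.\ $\Delta(hl)=\Delta(h)\Delta(l)$ for $h\in A_{x,y}$, $l\in A_{y,z}$, is exactly the commutativity of \equref{2.1.1}, both sides vanishing when the indices do not compose. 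For the weak multiplicativity of the counit I would use \equref{2.1.3} in the form $\varepsilon_{x,z}(ab)=\varepsilon_{x,y}(a)\varepsilon_{y,z}(b)$; applying it twice gives $\varepsilon(hkl)=\varepsilon_{x,y}(h)\varepsilon_{y,z}(k)\varepsilon_{z,u}(l)=\varepsilon(hk_{(1)})\varepsilon(k_{(2)}l)$ for $h\in A_{x,y}$, $k\in A_{y,z}$, $l\in A_{z,u}$, with both sides zero otherwise (and the variant with $k_{(1)},k_{(2)}$ interchanged is identical). It is precisely here that $\varepsilon$ fails to be multiplicative, since $\varepsilon(hk)=0$ when the indices of $h,k$ do not compose while $\varepsilon(h)\varepsilon(k)$ need not vanish.

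For the weak comultiplicativity of the unit I would compute $(\Delta\ot\id)\Delta(1)=\sum_x 1_x\ot 1_x\ot 1_x$ directly, and then expand $(\Delta(1)\ot 1)(1\ot\Delta(1))=\sum_{x,z,w,y}(1_x1_w)\ot(1_x1_y)\ot(1_z1_y)$; the orthogonality of idempotents $1_x1_w=\delta_{x,w}1_x$ collapses this quadruple sum to $\sum_x 1_x\ot 1_x\ot 1_x$, and $(1\ot\Delta(1))(\Delta(1)\ot 1)$ is handled symmetrically. Finally, for the antipode, evaluating the counital maps on $h\in A_{x,y}$ via $\Delta(1)=\sum_z 1_z\ot 1_z$ and the unit property yields $\varepsilon_t(h)=\varepsilon_{x,y}(h)1_x$ and $\varepsilon_s(h)=\varepsilon_{x,y}(h)1_y$, which are exactly the right-hand sides of \equref{2.2.1} and \equref{2.2.2}; hence the first two antipode axioms hold. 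The third follows formally: contracting the first two legs by coassociativity and \equref{2.2.2} gives $S(h_{(1)})h_{(2)}S(h_{(3)})=\varepsilon_{x,y}(h_{(1)})1_yS(h_{(2)})=1_yS_{x,y}(h)=S_{x,y}(h)$.

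These computations are the index-bookkeeping versions of the arguments already used in \thref{2.3}, so I expect no conceptual difficulty; the one step needing genuine care is the weak-unit axiom, where the orthogonality relations among the idempotents $1_x$ must be tracked precisely, since this is exactly the point at which the ordinary (non-weak) bialgebra axiom $\Delta(1)=1\ot 1$ breaks down.
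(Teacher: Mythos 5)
Your proposal is correct and follows essentially the same route as the paper: compute $\Delta(1)=\sum_x 1_x\ot 1_x$, verify the weak unit and weak counit axioms by index bookkeeping with the orthogonal idempotents $1_x$, identify $\varepsilon_t(h)=\varepsilon_{x,y}(h)1_x$ and $\varepsilon_s(h)=\varepsilon_{x,y}(h)1_y$ with the right-hand sides of \equref{2.2.1} and \equref{2.2.2}, and deduce the third antipode axiom by contracting the first two tensor legs. The only difference is that you spell out the algebra, coalgebra and $\Delta$-multiplicativity checks that the paper treats as already established; the substance is identical.
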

 
 \begin{proof}
 We refer to \cite{BohmNSI} for the definition of a weak Hopf algebra. We compute that
 $$\Delta(1)=1_{(1)}\ot 1_{(2)}=\sum_{x\in X} 1_x\ot 1_x,$$
 and
 $$1_{(1)}\ot 1_{(2)}1_{(1')}\ot 1_{(2')}=\sum_{x,y\in X} 1_x\ot 1_x1_y\ot 1_y
 =\sum_{x\in X} 1_x\ot 1_x\ot 1_x=(\Delta\ot A)(\Delta(1)),$$
 as needed. In a similar way, we show that 
 $$1_{(1)}\ot 1_{(1')}1_{(2)}\ot 1_{(2')}=(\Delta\ot A)(\Delta(1)).$$
Let us now show that
 $$\varepsilon(hkl)=\varepsilon(hk_{(1)})\varepsilon(k_{(2)}l).$$
 It suffices to show this for $h\in A_{x,y}$, $k\in A_{y',z'}$, $l\in A_{z,u}$. If
 $y\neq y'$ or $z\neq z'$, then both sides of the equation are $0$. 
 Assume that $y= y'$ and $z= z'$. From 
\equref{2.1.3}, it follows that
$\varepsilon(hk_{(1)})\varepsilon(k_{(2)}l)=\varepsilon(h)\varepsilon(k_{(1)})\varepsilon(k_{(2)}l)
 =\varepsilon(h)\varepsilon(\varepsilon(k_{(1)})k_{(2)}l)=
 \varepsilon(h)\varepsilon(kl) =\varepsilon(hkl)$. Similar arguments show that
 $$\varepsilon(hkl)=\varepsilon(hk_{(2)})\varepsilon(k_{(1)}l).$$
 This proves that $A$ is a weak bialgebra. For $h\in A_{x,y}$, we compute that
 $$
 \varepsilon_t(h)=\sum_{z\in X} \lan \varepsilon, 1_zh\ran 1_z
 =\lan\varepsilon, 1_xh\ran 1_x=\lan \varepsilon_{x,y},h\ran 1_x.$$
In a similar way, we show that $\varepsilon_s(h)=\lan\varepsilon,h1_y\ran 1_y=
\lan \varepsilon_{x,y},h\ran 1_y$. Now
\begin{eqnarray*}
h_{(1)}S_{x,y}(h_{(2)})\equal{\equref{2.2.1}} \eta_x(\varepsilon_{x,y}(h))=\varepsilon_t(h);\\
S_{x,y}(h_{(1)})h_{(2)}\equal{\equref{2.2.2}} \eta_y(\varepsilon_{x,y}(h))=\varepsilon_s(h),
\end{eqnarray*}
and, finally,
$$S_{x,y}(h_{(1)})h_{(2)}S_{x,y}(h_{(3)})=\varepsilon_{x,y}(h_{(1)})1_yS_{x,y}(h_{(2)})=
S_{x,y}(h).$$
 \end{proof}
 
 \begin{remark}
 Let $G$ be a groupoid. Using \exref{2.7}, we obtain a $k$-linear Hopf category.
 Then applying \prref{5.1}, we find a weak Hopf algebra, which is precisely the
 groupoid algebra $kG$.
 \end{remark}
 
Now let $C$ be a dual $k$-linear Hopf category. Then every $C_{x,y}$ is an algebra,
 and we have $k$-linear maps $\Delta_{x,y,z}:\ C_{x,z}\to C_{x,y}\ot C_{y,z}$,
 $\varepsilon_x:\ C_{x,x}\to k$ and $S_{x,y}:\ C_{y,x}\to C_{x,y}$ such that the
 following axioms are satisfied, for all $h,k\in C_{x,z}$ and $l,m\in C_{x,x}$:
 \begin{eqnarray}
 \Delta_{x,u,y}(h_{(1,y)})\ot h_{(2,y)}&=&h_{(1,y)}\ot \Delta_{y,u,z}(h_{(2,y)})\eqlabel{5.2.1}\\
 \varepsilon_x(h_{(1,x)})h_{(2,x)}&=&h_{(1,z)}\varepsilon_z(h_{(2,z)})=h;\eqlabel{5.2.2}\\
 \Delta_{x,y,z}(hk)&=&h_{(1,y)}k_{(1,y)}\ot h_{(2,y)}k_{(2,y)};\eqlabel{5.2.3}\\
 \varepsilon_x(lm)&=&\varepsilon_x(l)\varepsilon_x(m);\eqlabel{5.2.4}\\
 \Delta_{x,y,z}(1_{x,z})&=&1_{x,y}\ot 1_{y,z};\eqlabel{5.2.5}\\
 \varepsilon_x(1_{x,x})&=&1;\eqlabel{5.2.6}\\
 l_{(1,y)}S_{x,y}( l_{(2,y)})&=&\varepsilon_x(l)1_{x,y};\eqlabel{5.2.7}\\
S_{y,x}(l_{(1,y)}) l_{(2,y)}&=&\varepsilon_x(l)1_{y,x}.\eqlabel{5.2.8}
\end{eqnarray}
Cere $1_{x,y}$ is the unit element of $C_{x,y}$, and we used the Sweedler-Heyneman notation
$$\Delta_{x,y,z}(h)=h_{(1,y)}\ot h_{(2,y)}.$$

\begin{proposition}\prlabel{5.2}
Let $C$ be a dual $k$-linear Hopf category, and assume that $|C|=X$ is finite.
Then $C=\oplus_{x,y\in X}C_{x,y}$ is a weak Hopf algebra.
\end{proposition}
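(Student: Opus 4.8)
The plan is to pack $C=\oplus_{x,y\in X}C_{x,y}$ into a weak Hopf algebra by dualizing the construction of \prref{5.1}: there the category composition $m_{x,y,z}$ became the (weak) multiplication and the coalgebra structures of the $A_{x,y}$ became the comultiplication, whereas here the comultiplication maps $\Delta_{x,y,z}$ will furnish the (weak) comultiplication and the algebra structures of the $C_{x,y}$ will furnish the multiplication. Concretely, I would take the algebra $(C,m,1)$ to be the direct sum of the algebras $C_{x,y}$ — the product of elements from different summands is $0$, and $1=\sum_{x,y\in X}1_{x,y}$ — and the coalgebra $(C,\Delta,\varepsilon)$ to be given by $\Delta(c)=\sum_{y\in X}\Delta_{x,y,z}(c)$ for $c\in C_{x,z}$, and by $\varepsilon=\varepsilon_x$ on each diagonal summand $C_{x,x}$ and $\varepsilon=0$ on $C_{x,y}$ for $x\ne y$. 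The antipode is packed as $S|_{C_{x,y}}=S_{y,x}\colon C_{x,y}\to C_{y,x}$.

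First I would record the routine part. That $(C,m,1)$ is an associative unital algebra is clear, and that $(C,\Delta,\varepsilon)$ is a coassociative counital coalgebra is \equref{5.2.1} and \equref{5.2.2} repackaged. That $\Delta$ is multiplicative is \equref{5.2.3} on a single summand; for $h,k$ in different summands both $\Delta(hk)$ and $\Delta(h)\Delta(k)$ vanish, because the product in $C\ot C$ survives only when the intermediate indices of the two tensor legs match simultaneously. For the weak-bialgebra unit axiom $(\Delta\ot\id)\Delta(1)=(\Delta(1)\ot 1)(1\ot\Delta(1))=(1\ot\Delta(1))(\Delta(1)\ot 1)$ I would first compute, using \equref{5.2.5}, that $\Delta(1)=\sum_{x,u,z}1_{x,u}\ot 1_{u,z}$, and then check that all three triple-tensor expressions reduce to $\sum 1_{x,u}\ot 1_{u,z}\ot 1_{z,v}$; the surviving index pattern is precisely where the weakness sits. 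For the weak counit axiom $\varepsilon(hkl)=\varepsilon(hk_{(1)})\varepsilon(k_{(2)}l)=\varepsilon(hk_{(2)})\varepsilon(k_{(1)}l)$, the direct-sum product kills every term with mismatched indices and reduces everything to $h,k,l\in C_{x,x}$, where it follows from multiplicativity of $\varepsilon_x$ (\equref{5.2.4}) together with the counit law \equref{5.2.2}.

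The antipode is the heart of the matter. With $\varepsilon_t(h)=\varepsilon(1_{(1)}h)1_{(2)}$ and $\varepsilon_s(h)=1_{(1)}\varepsilon(h1_{(2)})$, a short calculation from the formula for $\Delta(1)$ gives $\varepsilon_t(h)=\delta_{x,y}\,\varepsilon_x(h)\sum_b 1_{x,b}$ and $\varepsilon_s(h)=\delta_{x,y}\,\varepsilon_x(h)\sum_a 1_{a,x}$ for $h\in C_{x,y}$. For $h_{(1)}S(h_{(2)})=\varepsilon_t(h)$ I would observe that $h_{(1,w)}\in C_{x,w}$ while $S(h_{(2,w)})\in C_{y,w}$, so their product vanishes unless $x=y$; hence both sides are $0$ for off-diagonal $h$, and for $h=l\in C_{x,x}$ the left-hand side is $\sum_w l_{(1,w)}S_{x,w}(l_{(2,w)})=\sum_w\varepsilon_x(l)1_{x,w}=\varepsilon_t(l)$ by \equref{5.2.7}. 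The identity $S(h_{(1)})h_{(2)}=\varepsilon_s(h)$ is symmetric and uses \equref{5.2.8}.

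The step I expect to be the main obstacle is the third antipode identity $S(h_{(1)})h_{(2)}S(h_{(3)})=S(h)$, which has to hold for all $h$ although \equref{5.2.7}--\equref{5.2.8} are only stated on the diagonal. The resolution is that the direct-sum multiplication forces $\Delta^{(2)}(h)$, for $h\in C_{x,y}$, to contribute only its single leg in $C_{x,y}\ot C_{y,x}\ot C_{x,y}$, which by coassociativity equals $h_{(1,y)}\ot(h_{(2,y)})_{(1,x)}\ot(h_{(2,y)})_{(2,x)}$ with the middle-and-right factors built from the \emph{diagonal} element $h_{(2,y)}\in C_{y,y}$. Applying \equref{5.2.7} to that diagonal element yields $(h_{(2,y)})_{(1,x)}S\big((h_{(2,y)})_{(2,x)}\big)=\varepsilon_y(h_{(2,y)})1_{y,x}$, after which the counit law \equref{5.2.2} collapses the last factor and gives $S_{y,x}\big(h_{(1,y)}\varepsilon_y(h_{(2,y)})\big)=S_{y,x}(h)=S(h)$. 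Recognizing that coassociativity always exposes such a diagonal element in the middle of $\Delta^{(2)}$ — so that the diagonal axioms \equref{5.2.7}--\equref{5.2.8} are in fact enough — is the crux. (When all $C_{x,y}$ are moreover finitely generated projective, one can bypass the computation entirely: by \thref{3b.4} and \prref{5.1} the packed algebra of the associated Hopf $\Mm_k^{\rm f}$-category is a weak Hopf algebra, and under the identification $\oplus_{x,y}C_{x,y}^{*}\cong(\oplus_{x,y}C_{x,y})^{*}$ the algebra $C$ is its dual, which is again a weak Hopf algebra.)
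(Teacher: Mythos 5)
Your proposal is correct and follows essentially the same route as the paper: pack the direct sum of the algebras $C_{x,y}$ with $\Delta(c)=\sum_y\Delta_{x,y,z}(c)$, verify the weak bialgebra axioms by tracking which index patterns survive the direct-sum product, and establish the antipode identities by observing that the surviving leg of $\Delta^{(2)}$ exposes a diagonal element to which \equref{5.2.7}--\equref{5.2.8} apply (the paper brackets $\Delta^{(2)}$ the other way and uses \equref{5.2.8} where you use \equref{5.2.7}, an immaterial difference). Your parenthetical duality shortcut is fine but, as you note, only covers the finitely generated projective case, so the direct computation is still needed in general.
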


\begin{proof}
Being the direct product of a finite number of $k$-algebras, $C$ is itself a $k$-algebra,
with unit $1=\sum_{x,z\in X} 1_{x,z}$.
We define a comultiplication on $C$ as follows:
$$\Delta(h)=\sum_{y\in X} \Delta_{x,y,z}(h),$$
for $h\in C_{x,z}$. It follows immediately from \equref{5.2.1} that $\Delta$ is coassociative.
The counit is defined by ($h\in C_{x,y}$):
$$\varepsilon(h)=\begin{cases}
\varepsilon_x(h)&{\rm if~} x=y\\
0&{\rm if~} x\neq y
\end{cases}$$
We verify the left counit condition:
$$
((\varepsilon\ot C)\circ \Delta)(h)=\sum_{y\in X} \varepsilon(h_{(1,y)})h_{(2,y)}=
\varepsilon_x(h_{(1,x)})h_{(2,x)}\equal{\equref{5.2.2}}h.$$
The right counit condition can be verified in a similar way, and we conclude that
$C$ is a coalgebra. It follows from \equref{5.2.3} and \equref{5.2.4} that $\Delta$ 
and $\varepsilon$ preserve the multiplication. it follows from \equref{5.2.5}
that 
$$\Delta(1)=1_{(1)}\ot 1_{(2)}=\sum_{x,y,z\in X} 1_{x,y}\ot 1_{y,z}.$$
We now find easily that
\begin{eqnarray*}
&&\hspace*{-2cm}
1_{(1)}\ot 1_{(2)}1_{(1')}\ot 1_{(2')}=
\sum_{x,y,z,u,v,w\in X} 1_{x,y}\ot 1_{y,z}1_{u,v}\ot 1_{v,w}\\
&=&\sum_{x,y,z,w\in X} 1_{x,y}\ot 1_{y,z}\ot 1_{z,w}=1_{(1)}\ot 1_{(2)}\ot 1_{(3)}.
\end{eqnarray*}
In a similar way, we find that $1_{(1)}\ot 1_{(1')}1_{(2)}\ot 1_{(2')}=
1_{(1)}\ot 1_{(2)}\ot 1_{(3)}$.
Now take $h,k,l\in C_{x,x}$.
\begin{eqnarray*}
&&\hspace*{-2cm}
\varepsilon(hk_{(1)})\varepsilon(k_{(2)}l)=
\sum_{y\in X} \varepsilon(hk_{(1,y)})\varepsilon(k_{(2,y)}l)
= \varepsilon_x(hk_{(1,x)})\varepsilon_x(k_{(2,x)}l)\\
&\equal{\equref{5.2.4}}& \varepsilon_x(h)\varepsilon_x(k_{(1,x)})\varepsilon_x(k_{(2,x)}l)
= \varepsilon_x(h)\varepsilon_x(\varepsilon_x(k_{(1,x)})k_{(2,x)}l)\\
&\equal{\equref{5.2.2}}&\varepsilon_x(h)\varepsilon_x(kl)
\equal{\equref{5.2.4}} \varepsilon_x(hkl)=\varepsilon_x(hkl).
\end{eqnarray*}
We conclude that
\begin{equation}\eqlabel{5.2.9}
\varepsilon(hk_{(1)})\varepsilon(k_{(2)}l)=\varepsilon (hkl),
\end{equation}
if $h,k,l\in C_{x,x}$. If $h,k,l\in C_{x,y}$ with $y\neq x$, then both sides of \equref{5.2.9} are
zero. So we can conclude that \equref{5.2.9} holds for all $h,k,l\in C$. In a similar way, we
can show that
$$\varepsilon(hk_{(2)})\varepsilon(k_{(1)}l)=\varepsilon \Delta_{(g\circ f)(x),(g'\circ f)(x),(g'\circ f')(x)}(hkl),$$
for all $h,k,l\in C$. This shows that $C$ is a weak bialgebra.\\
Recall from \cite{BohmNSI} that the maps $\varepsilon_s,~\varepsilon_t:\ C\to C$
are given by the formulas
$$\varepsilon_s(h)=1_{(1)}\varepsilon(h1_{(2)})~~;~~\varepsilon_t(h)=
\varepsilon(1_{(1)}h)1_{(2)}.$$
These maps can be easily computed: for $h\in C_{x,z}$, we have
$$\varepsilon_t(h)=\sum_{u,v,y\in X} \varepsilon(1_{u,v}h)1_{v,y}
=\sum_{y\in X} \varepsilon(h)1_{z,y}=
\begin{cases}
\sum_{y\in X} \varepsilon_x(h)1_{x,y} &{\rm if~}x= z\\
0&{\rm if~}x\neq z
\end{cases}$$
In a similar way, we find that
$$\varepsilon_s(h)=
\begin{cases}
\sum_{y\in X} \varepsilon_x(h)1_{y,x} &{\rm if~}x= z\\
0&{\rm if~}x\neq z
\end{cases}$$
Now we define $S:\ C\to C$ as follows: the restriction of $S$ to $C_{x,y}$ is
$S_{y,x}$, and then we extend linearly. Then we have, for $h\in C_{x,z}$:
$$(S*C)(h)=\sum_{y\in X} S_{y,x}(h_{(1,y)})h_{(2,y)}.$$
If $x\neq z$, then we find easily that $(S*C)(h)=0=\varepsilon_s(h)$. If $x=z$, then
we find
$$(S*C)(h)\equal{\equref{5.2.8}}\sum_{y\in X} \varepsilon_x(h)1_{y,x}=\varepsilon_s(h).$$
This shows that $S*C=\varepsilon_s$. In a similar way, we have that $C*S=\varepsilon_t$.
Finally we have that
$$(S*C*S)(h)=\sum_{y,u\in X} S_{u,x}(h_{(1,y)(1,u)})h_{(1,y)(2,u)}S_{z,y}(h_{(2,y)}).$$
The terms on the right hand side are products of an element of $C_{u,x}$, an
element of $C_{u,y}$ and an element of $C_{z,y}$. These products are zero if $x\neq y$
of $z\neq u$. Hence we find
\begin{eqnarray*}
&&\hspace*{-2cm}
(S*C*S)(h)=S_{z,x}(h_{(1,x)(1,z)})h_{(1,x)(2,z)}S_{z,x}(h_{(2,x)})\\
&\equal{\equref{5.2.8}}&\varepsilon_x(h_{(1,x)})1_{z,x}S_{z,x}(h_{(2,x)})
\equal{\equref{5.2.2}}S_{x,z}(h)=S(h).
\end{eqnarray*}
This proves that $C$ satisfies all the axioms of a weak Hopf algebra,
see \cite{BohmNSI}.
\end{proof}

\begin{remark}
If $A$ be a $k$-linear Hopf category, with $|A|=X$ an infinite set, then $A=\oplus_{x,y\in X} A_{x,y}$ is an algebra without unit, but with (idempotent) local units. We believe that if $A$ is a Hopf category and using similar constructions as above, the associated algebra $A$ can be endowed with the structure of a {\em weak multiplier Hopf algebra} (see \cite{VanDaeleWang} and \cite{BohmGT}), but we haven't worked out the details of this construction.
\end{remark}

\section{Hopf categories and duoidal categories}\selabel{6}
Let $X$ be a set. We have seen in \seref{1} that $(\Mm_k(X),\bullet,J)$ is a monoidal
category. We will define a second monoidal structure on $\Mm_k(X)$, in such a way that
$\Mm_k(X)$ becomes a duoidal category (also called 2-monoidal category)  in the sense of 
\cite{Aguiar}. We will follow the notation
of \cite{BCZ}, and we call $\bullet$ the black tensor product on $\Mm_k(X)$. The second tensor
product is called the white tensor product and is defined as follows. For $M,N\in \Mm_k(X)$, let
$$(M\odot N)_{x,z}=\oplus_{y\in X} M_{x,y}\ot N_{y,z}.$$
The unit object for the white tensor product is $I$, defined by
$$I_{x,y}=
\begin{cases}
ke_{x,x}&{\rm if}~x=y\\ 0&{\rm if}~x\neq y
\end{cases}$$
We will simply write
$$I_{x,y}=k\delta_{x,y},$$
where the Kronecker symbol $\delta_{x,y}$ stands formally for the element of the identity matrix
in the $(x,y)$-position. Let
$$\tau:\ I\to J$$
be the natural inclusion. We compute that
$$(I\bullet I)_{x,y}=k\delta_{x,y}\ot k\delta_{x,y}=k\delta_{x,y}=I_{x,y},$$
hence $I\bullet I=I$, and we let
$$\delta:\ I\to I\bullet I$$
be the identity map. Now we compute that
$$(J\odot J)_{x,y}=\oplus_{z\in X} ke_{x,z}\ot ke_{z,y}=
\oplus_{z\in X} kze_{x,y}=kXe_{x,y}.$$
We now define $\varpi:\ J\odot J\to J$. For all $x,y\in X$,
$$\varpi_{x,y}:\ \oplus_{z\in X} kze_{x,y}\to k e_{x,y},~~\varpi_{x,y}(\sum_{z\in X} \alpha_z ze_{x,y})=
\sum_{z\in X} \alpha_z e_{x,y}.$$
For $M,N,P,Q\in \Vv(X)$ we have that
\begin{eqnarray*}
&&\hspace*{-2cm}
((M\bullet N)\odot (P\bullet Q))_{x,y}=\bigoplus_{z\in X} M_{x,z}\ot N_{x,z}\ot P_{z,y}\ot Q_{z,y};\\
&&\hspace*{-2cm}
((M\odot P)\bullet (N\odot Q))_{x,y}=\bigoplus_{u,v\in X} M_{x,u}\ot P_{u,y}\ot N_{x,v}\ot  Q_{v,y},
\end{eqnarray*}
and we define
$$\zeta_{M,N,P,Q}:\ (M\bullet N)\odot (P\bullet Q)\to (M\odot P)\bullet (N\odot Q)$$
as follows: for $x,y\in X$, $\zeta_{M,N,P,Q,x,y}$ is the map switching the second and third tensor
factor, followed by the natural inclusion.

\begin{theorem}\thlabel{6.1}
Let $X$ be a set.
$(\Mm_k(X), \odot, I, \bullet, J, \delta, \varpi,\tau, \zeta)$ is a duoidal category.
\end{theorem}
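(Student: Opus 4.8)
The plan is to recognise $(\Mm_k(X),\odot,I,\bullet,J)$ as the duoidal category of $X\times X$ matrices with entries in $\Mm_k$, with $\bullet$ the entrywise (Hadamard) product and $\odot$ matrix multiplication, and then to check the axioms of a duoidal category (\cite{Aguiar,BCZ}) one at a time. Since everything in sight is built from $\ot$, the symmetry $c$, and $X$-indexed direct sums in $\Mm_k$, each axiom should reduce to a routine identity between explicit tensor/direct-sum modules.

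First I would confirm that $\bullet$ and $\odot$ are both monoidal structures. The black one $(\Mm_k(X),\bullet,J)$ is the special case $\Vv=\Mm_k$ of the construction of \seref{1}. For the white one, associativity holds because both $((M\odot N)\odot P)_{x,w}$ and $(M\odot(N\odot P))_{x,w}$ are canonically $\bigoplus_{y,z} M_{x,y}\ot N_{y,z}\ot P_{z,w}$, the associator being assembled from that of $\ot$ together with the canonical isomorphism $\bigoplus_y\bigoplus_z\cong\bigoplus_{y,z}$; unitality holds because $(I\odot M)_{x,z}=I_{x,x}\ot M_{x,z}=k\ot M_{x,z}\cong M_{x,z}$ and symmetrically on the right, the off-diagonal entries $I_{x,y}$ ($x\neq y$) being zero.

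Next I would install the three unit-object data. Since $I\bullet I=I$ and $\delta$ is the identity, coassociativity of $\delta$ is automatic, and the counit laws hold because $(J\bullet I)_{x,y}\cong I_{x,y}\cong(I\bullet J)_{x,y}$ canonically; thus $(I,\delta,\tau)$ is a $\bullet$-comonoid. Dually, $\varpi$ is the fold map collapsing $\bigoplus_z kze_{x,y}$ onto $ke_{x,y}$, so its associativity $\varpi\circ(\varpi\odot J)=\varpi\circ(J\odot\varpi)$ holds because both sum a doubly indexed family over both indices, while $\varpi\circ(\tau\odot J)$ is the left unitor since $\tau\odot J$ lands in the $z=x$ summand; thus $(J,\varpi,\tau)$ is a $\odot$-monoid. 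Naturality of $\zeta$ follows at once from naturality of $c$ and of the direct-sum inclusions.

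The bulk of the work, and the main obstacle, is the interchange coherence: the two associativity axioms (compatibility of $\zeta$ with the associator of $\odot$ and with that of $\bullet$) together with the remaining unit axioms tying $\zeta$ to $\delta$, $\varpi$ and $\tau$. In each matrix entry these become equalities of $k$-linear maps between iterated tensor/direct-sum modules. For the $\odot$-associativity axiom one must verify that the composite of two interchange maps, each transposing one pair of adjacent middle factors, equals the single interchange for the triple product $(A\odot C\odot E)\bullet(B\odot D\odot F)$, done compatibly with $\bigoplus_y\bigoplus_z\cong\bigoplus_{y,z}$; for the $\varpi$-axiom one must check that folding the summation index commutes with the transposition inserted by $\zeta$. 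I expect no conceptual difficulty here: because $c$ is the ordinary flip and all $X$-indexed direct sums recombine canonically, each identity reduces on elements to an evident reshuffling of the same tensor factors by either route (coherence of the symmetry), so the only real care needed is to set up the summation indices consistently and to track the single non-identity structure map $\varpi$.
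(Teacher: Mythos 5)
Your proposal is correct and follows essentially the same route as the paper: both proceed by direct entrywise verification of the duoidal axioms of \cite{Aguiar,BCZ}, checking that $(J,\varpi,\tau)$ is a monoid for $\odot$, that $(I,\delta,\tau)$ is a comonoid for $\bullet$, and that the interchange and unit coherences reduce to evident reshufflings of tensor factors and summation indices (the paper leaves this last block to the reader, which you flesh out slightly more). The matrix-algebra framing ($\bullet$ as Hadamard product, $\odot$ as matrix multiplication) is a nice gloss but does not change the substance of the verification.
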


\begin{proof}
We have to show that the axioms in \cite[Def. 1.1]{BCZ} are satisfied.\\
1) $(J,\varpi,\tau)$ is a monoid in $(\Mm_k(X),\odot, I)$.\\
Associativity: first compute that
$$(J\odot J\odot J)_{x,y}=k(X\times X) e_{x,y}=\oplus_{u,v\in X} k(u,v)e_{x,y},$$
and
\begin{eqnarray*}
&&\hspace*{-2cm}
(\varpi(J\odot \varpi))(\sum_{u,v} \alpha_{(u,v)}(u,v) e_{x,y})
= \varpi(\sum_{u,v} \alpha_{(u,v)}u e_{x,y})\\
&=& \sum_{u,v} \alpha_{(u,v)} e_{x,y}=( \varpi( \varpi\odot J))(\sum_{u,v} \alpha_{(u,v)}(u,v) e_{x,y}).
\end{eqnarray*}
Left unit property: we have to show that the diagram
$$\xymatrix{(J\odot I)_{x,y}\ar[drr]_{=}\ar[rr]^{(J\odot \tau)_{x,y}}&&
(J\odot J)_{x,y}\ar[d]^{\varpi_{x,y}}\\
&&J_{x,y}}$$
commutes, for all $x,y\in X$. Observe that
$(J\odot I)_{x,y}=\oplus_{z\in X} ke_{x,z}\ot k\delta_{z,y}=ke_{x,y}=J_{x,y}$ and
$(J\odot J)_{x,y}=kXe_{x,y}$. Now
$$\varpi_{x,y}\bigl((J\odot \tau)_{x,y}(\alpha e_{x,y}\bigr)=
\varpi_{x,y}(\alpha y e_{x,y})= \alpha e_{x,y},$$
for all $\alpha\in k$. The right unit property can be shown in a similar way.\\
2) $(I,\delta,\tau)$ is a comonoid in $(\Mm_k(X),\bullet,J)$.\\
The coassociativity of $\delta$ is clear, since $\delta$ is the identity map. For the left
counit property: oberve that the diagram
$$\xymatrix{I_{x,y}=k\delta_{x,y}\ar[d]_{\delta_{x,y}}\ar[drr]^{=}\\
I_{x,y}=k\delta_{x,y}\ar[rr]^(.4){(J\bullet \tau)_{x,y}}&&(J\bullet I)_{x,y}=k\delta_{x,y}}$$
commutes: the three maps in the diagram are the identity map.\\
3) Verification of the associativity and unitality axioms \cite[1.6-7]{BCZ} is obvious
and is left to the reader.
\end{proof}

Recall the following definition from \cite[Def. 6.25]{Aguiar} (see also \cite[Def. 1.2]{BCZ}).

\begin{definition}\delabel{6.2}
Let $(\Mm,\odot, I, \bullet, J, \delta, \varpi,\tau, \zeta)$ be a duoidal category.
A bimonoid is an object $A$, together with an algebra structure $(\mu,\eta)$
in $(\Mm,\odot, I)$
and a coalgebra structure $(\Delta,\varepsilon)$ in $(\Mm,\bullet, J)$ subject to
the compatibility conditions
\begin{eqnarray}
\Delta\circ \mu&=&(\mu\bullet\mu)\circ \zeta\circ (\Delta\odot\Delta);\eqlabel{6.2.1}\\
\varpi\circ (\varepsilon\odot\varepsilon)&=&\varepsilon\circ\mu;\eqlabel{6.2.2}\\
(\eta\bullet\eta)\circ \delta&=&\Delta\circ\eta;\eqlabel{6.2.3}\\
\varepsilon\circ\eta&=&\tau.\eqlabel{6.2.4}
\end{eqnarray}
\end{definition}

\begin{theorem}\thlabel{6.3}
Let $X$ be a set, and let $A\in \Mm_k(X)$. We have a bijective correspondence between bimonoid
structures on $A$ over the
duoidal category $(\Mm_k(X), \odot, I, \bullet, J, \delta, \varpi,\tau, \zeta)$ from
\thref{6.1} and $\Cog(\Mm_k)$-category structures on $A$.
\end{theorem}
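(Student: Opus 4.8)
(plan).
The plan is to unpack a bimonoid structure on $A$ into underlying data on the object $A\in\Mm_k(X)$, to recognize that data together with the four compatibility axioms \equref{6.2.1}--\equref{6.2.4} as exactly the conditions of \prref{2.1}, and thereby to identify it with a $\Cog(\Mm_k)$-category structure. The correspondence will be the identity on the underlying object, hence manifestly bijective once both structures are expressed in terms of the same morphisms.

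First I would analyze the $\odot$-algebra structure $(\mu,\eta)$. Since $(A\odot A)_{x,z}=\oplus_{y\in X}A_{x,y}\ot A_{y,z}$, the map $\mu_{x,z}$ splits into components $m_{x,y,z}:\ A_{x,y}\ot A_{y,z}\to A_{x,z}$, one for each $y\in X$, and associativity of $\mu$ over $\odot$ is precisely \equref{1.1}. Because $I_{x,y}=0$ for $x\neq y$, the unit $\eta:\ I\to A$ reduces to morphisms $\eta_x:\ k\to A_{x,x}$, and the $\odot$-unit axioms give exactly \equref{1.2}. So an $\odot$-algebra structure is the same as an $\Mm_k$-category structure. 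Dually, a $\bullet$-coalgebra structure $(\Delta,\varepsilon)$ consists of maps $\Delta_{x,y}:\ A_{x,y}\to A_{x,y}\ot A_{x,y}$ and $\varepsilon_{x,y}:\ A_{x,y}\to ke_{x,y}$ making each $A_{x,y}$ a coalgebra in $\Mm_k$, which is precisely a coalgebra structure on $A$ in $\Mm_k(X)$.

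Next I would translate the four compatibility conditions componentwise and match them against the diagrams in the proof of \prref{2.1}. Restricting \equref{6.2.1} to the summand indexed by $y$ and using that $\zeta$ switches the two middle tensor factors (which in the symmetric category $\Mm_k$ is exactly $\id\ot c\ot\id$) and then includes into the $u=v=y$ component, one recovers \equref{2.1.1}, i.e. that $m_{x,y,z}$ is comultiplicative. Likewise $\varpi:\ J\odot J\to J$ realizes the multiplication on the counit objects $ke_{x,y}$, so \equref{6.2.2} on the $y$-summand becomes \equref{2.1.3}. Since $I\bullet I=I$ with $\delta=\id$, condition \equref{6.2.3} at $x=y$ reads $\Delta_{x,x}\circ\eta_x=\eta_x\ot\eta_x$, which is \equref{2.1.2}; and since $\tau$ is the natural inclusion, \equref{6.2.4} at $x=y$ reads $\varepsilon_{x,x}\circ\eta_x=\id_k$, which is \equref{2.1.4}. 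For $x\neq y$ the axioms involving $\eta$ and $I$ are vacuous, as $I_{x,y}=0$. By \prref{2.1}, the simultaneous validity of \equref{2.1.1}--\equref{2.1.4} is exactly what upgrades the $\Vv$-category structure $(m,\eta)$ together with the coalgebra structure $(\Delta,\varepsilon)$ to a $\Cog(\Mm_k)$-category. Hence the two structures are given by identical data subject to identical axioms, and the correspondence is a bijection.

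The main obstacle I anticipate is the bookkeeping in \equref{6.2.1}: one must check that the composite $(\mu\bullet\mu)\circ\zeta\circ(\Delta\odot\Delta)$, which a priori ranges over the double index set $\{(u,v)\}$, collapses onto the diagonal $u=v=y$ and there reproduces the symmetry $\id\ot c\ot\id$ appearing in \equref{2.1.1}. Verifying that the ``switch plus natural inclusion'' defining $\zeta$ does precisely this, and agrees componentwise with the braiding used in \seref{2}, is the one computation requiring care; the remaining three axioms are direct once the index conventions are fixed.
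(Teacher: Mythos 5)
Your proposal is correct and follows essentially the same route as the paper: decompose the $\odot$-algebra structure into the components $m_{x,y,z}$ and $\eta_x$, identify the $\bullet$-coalgebra structure with componentwise coalgebras, and match the four bimonoid axioms \equref{6.2.1}--\equref{6.2.4} to the four diagrams \equref{2.1.1}--\equref{2.1.4} characterizing a $\Cog(\Mm_k)$-category. The only cosmetic difference is that the paper verifies the compatibilities by evaluating the component diagrams on elements rather than citing \prref{2.1} explicitly, and your worry about the $\zeta$-bookkeeping is resolved exactly as you anticipate (the image lands in the diagonal summand $u=v=z$ and reproduces $\id\ot c\ot\id$).
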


\begin{proof}
First let $A$ be a bimonoid. $A$ has an algebra structure $(\mu,\eta)$ on $(\Mm_k(X),\odot,I)$.
Consider the $(x,y)$-component of the multiplication map $\mu:\ A\odot A\to A$, namely
$$\mu_{x,y}:\ \oplus_{u\in X} A_{x,u}\ot A_{u,y}\to A_{x,y},$$
and let $\mu_{x,z,y}$ be the composition
$$\mu_{x,y}\circ i_z:\ A_{x,z}\ot A_{z,y}\to \oplus_{u\in X} A_{x,u}\ot A_{u,y}\to A_{x,y},$$
where $i_z$ is the natural inclusion. Also consider the $(x,x)$-component of the unit map
$\eta:\ I\to A$, namely $\eta_x=\eta_{x,x}:\ k\to A_{x,x}$. Now it is easy to see that
(\ref{eq:1.1}-\ref{eq:1.2}) are satisfied, so that $A$ becomes a $k$-linear category.\\
$A$ has a coalgebra structure $(\Delta,\varepsilon)$ on $(\Mm_k(X),\bullet,J)$. Consider
the $(x,y)$-component of the comultiplication $\Delta:\ A\to A\bullet A$ and of the counit
$\varepsilon:\ A\to J$. This gives $k$-linear maps $\Delta_{x,y}:\ A_{x,y}\to A_{x,y}\ot A_{x,y}$
and $\varepsilon_{x,y}:\ A_{x,y}\to k$ making $A_{x,y}$ into a $k$-coalgebra.\\
Now we write the $(x,y)$-component of \equref{6.2.1} and \equref{6.2.2} as commutative diagrams. This gives us
$$\xymatrix{
\oplus_z A_{x,z}\ot A_{z,y}
\ar[d]_{\oplus_z \Delta_{x,z}\ot \Delta_{z,y}}
\ar[r]^(.6){\mu_{x,y}}&
A_{x,y}\ar[r]^(.4){\Delta_{x,y}}&
A_{x,y}\ot A_{x,y}\\
\oplus_z A_{x,z}\ot A_{x,z}\ot A_{z,y}\ot A_{z,y}
\ar[rr]^{\zeta_{x,y}}&&
\oplus_{u,v} A_{x,u}\ot A_{u,y}\ot A_{x,v}\ot A_{v,y}
\ar[u]_{\mu_{x,y}\ot \mu_{x,y}}
}$$
and
$$\xymatrix{
\oplus_z A_{x,z}\ot A_{z,y}
\ar[d]_{\mu_{x,y}}\ar[rr]^(.4){\oplus_z \varepsilon_{x,z}\ot \varepsilon_{z,y}}&&
\oplus_z ke_{x,z}\ot e_{z,y}=\oplus_z kze_{x,y}\ar[d]^{\varpi}\\
A_{x,y}\ar[rr]^{\varepsilon_{x,y}}&&ke_{x,y}
}$$
Evaluating the two diagrams at $a\ot b\in A_{x,z}\ot A_{z,y}$, we find that
$$\Delta_{x,y}(ab)=a_{(1)}b_{(1)}\ot a_{(2)}b_{(2)}~~{\rm and}~~
\varepsilon_{x,y}(ab)=\varepsilon_{x,z}(a)\varepsilon_{y,z}(b).$$
Now we write the $(x,x)$-component of \equref{6.2.3} and \equref{6.2.4} as commutative diagrams. This gives
$$\xymatrix{
k\ar[d]_{\delta_{x,x}}\ar[r]^{\eta_x}&
A_{x,x}\ar[d]^{\Delta_{x,x}}\\
k\ot k \ar[r]^(.4){\eta_x\ot \eta_x}& A_{x,x}\ot A_{x,x}
}~~~{\rm and}~~~
\xymatrix{
k\ar[r]^(.4){\eta_x}\ar[rd]_{\tau_{x,x}}&
A_{x,x}\ar[d]^{\varepsilon_{x,x}}\\
&k}$$
Evaluating these diagrams at $1$, we find that $\Delta_{x,x}(1_x)=1_x\ot 1_x$ and $\varepsilon_{x,x}(1_x)=1$,
and we conclude that $A$ is a $\Cog(\Mm_k)$-category.\\
Conversely, let $A$ be a $\Cog(\Mm_k)$-category. Define $\mu:\ A\odot A\to A$, $\eta:\ I\to A$, $\Delta:\ A\to A\bullet A$
and $\varepsilon:\ A\to J$ as follows. $\mu_{x,y}=\sum_u \mu_{x,u,y}:\
\oplus_{u\in X} A_{x,u}\ot A_{u,y}\to A_{x,y}$; $\eta_{x,y}=0$ if $x\neq y$ and $\eta_{x,x}=\eta_x$; the components of
$\Delta$ and $\varepsilon$ are just $\Delta_{x,y}$ and $\varepsilon_{x,y}$. Straightforward computations show that
this turns $A$ into a bimonoid. It is clear that these two operations are inverses. This completes the proof.
\end{proof}

\subsection*{Linearization and the duoidal category of spans}
We have seen in \thref{6.1} that we can associate a duoidal category $\Mm_k(X)$ to a set $X$.
In \cite{Aguiar,BCZ}, two other classes of duoidal categories are investigated, namely
the category ${\rm span}(X)$ consisting of spans, and the category ${}_R\Mm_R$
of bimodules over a commutative $k$-algebra $R$. We will now discuss how these three
classes of examples are related. To this end, we need to give alternative descriptions of
$\Mm_k(X)$ and ${\rm span}(X)$.\\
As we have seen in \exref{2.11}, every set $X$ carries a unique comonoid structure in $\Sets$. 
A right $X$-coaction on a set $V$ consists of a map $\rho:\ V\to V\times X$ of the form $\rho(v)=(v,s(v))$,
where $s:\ V\to X$ is a function. So right $X$-coactions on $V$ correspond bijectively to $X^V$.
In a similar way, giving a two-sided coaction of $X$ on $V$ amounts to giving two functions $s,t:\ V\to X$,
which means precisely that $(V,t,s)$ is a span, see \cite[Sec. 4.2]{BCZ}. Morphisms of spans correspond
to bicomodule maps, and we conclude that the categories ${}^X\Sets^X$ and ${\rm span}(X)$ are isomorphic.
The white product of two spans $V$ and $W$ is
$$V\odot W=\{(v,w)\in V\times W~|~s(v)=t(w)\}$$
is precisely the cocarthesian product $V\times^X W$. Now observe that the category ${}^X\Sets^X$
is isomorphic to $\Sets^{X\times X}$.  The black product is
$$V\bullet W=\{(v,w)\in V\times W~|~s(v)=s(w),~t(v)=t(w)\}$$
and this is the cocarthesian product $V\times^{X\times X} W$. The white unit object is $X$, and the black unit object
is $X\times X$.\\
A similar description applies to $\Mm_k(X)$. $kX$ is a coalgebra, and we have isomorphisms of categories
$$\Mm_k(X)\cong {}^{kX}\Mm^{kX}_k\cong \Mm_k^{k(X\times X)}.$$
An object $(M_{x,y})_{x,y\in X}$ corresponds to $M=\oplus_{x,y\in X} M_{x,y}$, with left and right $kX$-coaction
given by the formulas
$$\lambda(m)=x\ot m~~;~~\rho(m)=m\ot y,$$
for $m\in M_{x,y}$, extended linearly. The black tensor product in $\Mm_k(X)$ is precisely the cotensor product over
$k(X\times X)$, and the white one is the cotensor product over $kX$.\\
The linearization functor $L:\ \Sets\to \Mm_k$ is strongly monoidal, sends $X$ to the grouplike coalgebra $kX$
and a set $V$ with a two-sided $X$-coaction to the $kX$-bicomodule $kV$. We find the following result.

\begin{proposition}\prlabel{6.4}
The linearization functor induces a functor $L:\ {\rm span}(X)\to \Mm_k(X)$ preserving the black and white
tensor products.
\end{proposition}

This construction can be generalized, replacing $kX$ by a cocommutative coalgebra $C$. We have to assume
that the cotensor product is associative, which can be done by requiring that $k$ is a field, or else that $k$ is
a commutative ring and that $C$ is finitely generated and projective over $k$. Then the category
${}^C\Mm_k^C\cong \Mm_k^{C\ot C}$ of $C$-bicomodules is duoidal, with the cotensor product over $C$ and $C\ot C$ as the
white and black tensor product. This brings us back to the second example of duoidal category studied in \cite{Aguiar,BCZ}.
For a commutative $k$-algebra $A$, the category ${}_A\Mm_A\cong \Mm_{A\ot A}$ is a duoidal category,
with the tensor products over $A$ and $A\ot A$ as the black and white tensor product. This is precisely the dual construction.

\subsection*{Generalized Hopf monoids in monoidal bicategories}
Now we focus attention to the recent work by B\"ohm and Lack \cite{BL} on generalized Hopf monoids in monoidal bicategories.\\
It is well-known that the category of endomorphisms of an object of a bicategory is a monoidal category. 
It was observed in \cite{Street} that, in a similar way, duoidal categories arise as the category of endomorphisms
in a monoidal bicategory of a pseudomonoid whose multiplication $1$-cell and unit $1$-cell have a right adjoint (such an object is known as a {\em map-monoidale}). In this case, the second monoidal structure is obtained using a {\em convolution product}.
Consider the monoidal bicategory of free $k$-coalgebras, bicomodules and bicomodule maps, with the cotensor product as horizontal composition, the opposite composition as vertical composition and the $k$-tensor product as monoidal product.
$kX$ is a map-monoidale in this monoidal bicategory. Hence the category $\Mm_k(X)\cong {}^{kX}\Mm_k^{kX}$ of $kX$-bicomodules
is the category of endomorphisms over a map-monoidale, so it can be endowed with a duoidal structure.
This duoidal structure coincides with the one described above, the black monoidal product being the convolution product. It also follows from \cite{Street} that $A$ is a bimonoid over the duoidal endohom category $\Mm_k(X)$ if and only if it is a monoidal comonad on $kX$ in the monoidal bicategory described above, hence it induces a monoidal comonad on $\Mm_k(X)$. 

Furthermore, B\"ohm and Lack provide equivalent conditions for the bimonoid $A$ in the duoidal endohom category to have an antipode (i.e. to be a Hopf monoid), in terms of a fundamental theorem of Hopf modules (see also our \seref{9}) and in terms of the associated monoidal comonad to be a Hopf (co)monad. 
In particular, this leads us to the following result. 

\begin{theorem}\thlabel{6.5}
Let $X$ be a set, and let $A\in \Mm_k(X)$. We have a bijective correspondence between Hopf monoid
structures on $A$ (in the sense of \cite{BL}) over the
duoidal category $(\Mm_k(X), \odot, I, \bullet, J, \delta, \varpi,\tau, \zeta)$ from
\thref{6.1} and Hopf $\Mm_k$-category structures on $A$. In particular, if $A$ is Hopf $\Mm_k$-category, then this induces a Hopf monad on $\Mm_k(X)$. 
\end{theorem}

\begin{proof}
From the discussion above, we already know that the structure of an $\Cog(\Mm_k)$-category on $A$ corresponds to the structure of a bimonoid in the duoidal category $\Mm_k(X)$. Hence it only remains to compare the antipode axioms for Hopf categories \equref{2.2.1} and \equref{2.2.2} with the antipode axioms of \cite[Theorem 7.2]{BL}. We leave out the details, but remark that the monoidal bicategory of bicomodules over free coalgebras has duals. Given a $kX$-$kY$ bicomodule $M=\oplus_{(x,y)\in X\times Y}M_{x,y}$, then $M^{-}=M^{op}=\oplus_{(y,x)\in Y\times X}M_{y,x}$ is a $kY$-$kX$ bicomodule. Furthermore, the $2$-cell $\varphi$ in \cite{BL} should in our setting be interpreted as the inclusion map $A_{x,y}\ot A_{y,x}\to \oplus_{y\in X} A_{x,y}\ot A_{y,x}$.
\end{proof}

\section{Hopf categories and Morita contexts}\selabel{6b}
Let $k$ be a commutative ring, and $\Vv=\Mm_k$, the category of $k$-modules.

\begin{definition}\delabel{6b.1}
A Morita context consists of the following data:
\begin{enumerate}
\item a class $X$;
\item $A_{x,x}$ is a $k$-algebra, for all $x\in X$;
\item $A_{x,y}$ is an $(A_{x,x},A_{y,y})$-bimodule, for all $x,y\in X$;
\item $\ol{m}_{x,y,z}:\ A_{x,y}\ot_{A_{y,y}}A_{y,z}\to A_{x,z}$ is an 
$(A_{x,x},A_{z,z})$-bimodule map,
\end{enumerate}
satsifying the following conditions:
\begin{enumerate}
\item $\ol{m}_{x,x,y}:\ A_{x,x}\ot_{A_{x,x}}A_{x,y}\to A_{x,y}$
and
$\ol{m}_{x,y,y}:\ A_{x,y}\ot_{A_{y,y}}A_{y,y}\to A_{x,y}$
are the canonical isomorphisms;
\item the associativity condition \equref{6b1.1} is satisfied, for all $x,y,z,u\in X$
\end{enumerate}
\begin{equation}\eqlabel{6b1.1}
\ol{m}_{x,y,u}\circ (A_{x,y}\ot_{A_{y,y}} \ol{m}_{y,z,u})=
\ol{m}_{x,z,u}\circ (\ol{m}_{x,y,z}\ot_{A_{z,z}}A_{z,u}).
\end{equation}
For $a\in A_{x,y}$ and $b\in A_{y,z}$, we will write
$\ol{m}_{x,y,z}(a\ot_{A_{y,y}}n)=ab$.
\end{definition}

Morita contexts can be organized into a 2-category ${}_k\dul{\rm Mor}$. 
Before we describe the 1-cells, we recall the following result. Let $f:\
A\to B$ be a morphism of $k$-algebras, and consider $M,N\in \Mm_A$,
$M',N'\in \Mm_B$, and $k$-linear maps $g:\ M\to M'$ and $h:\ N\to N'$ such
that $g(ma)=g(m)f(a)$ and $h(an)=f(a)h(n)$, for all $a\in A$, $m\in M$
and $n\in N$. Then we have a well-defined map
$$g\ot_f h:\ M\ot_A N\to M'\ot_BN',~~(g\ot_f h)(m\ot_A n)=g(m)\ot_B h(n).$$
A 1-cell $f:\ A\to B$ in ${}_k\dul{\rm Mor}$ consists of $f:\ X\to Y$, and
maps $f_{x,y}:\ A_{x,y}\to B_{f(x),f(y)}$ such that
\begin{itemize}
\item every $f_{x,x}$ is an algebra map;
\item $f_{x,y}(a'aa'')=f_{x,x}(a')f_{x,y}(a)f_{y,y}(a'')$, for all $a'in A_{x,x}$,
$a\in A_{x,y}$ and $a''\in A_{y,y}$;
\item $f_{x,y}\circ \ol{m}_{x,y,z}=\ol{m}_{f(x),f(y),f(z)}\circ (f_{x,y}\ot_{f_{y,y}}f_{y,z})$.
\end{itemize}
For two given 1-cells $f,g:\ A\to B$, a 2-cell $\alpha:\ f\Rightarrow g$ consists of
a family of elements $\alpha_x\in B_{g(x),f(x)}$ indexed by $x$ such that
$$\ol{m}_{g(x),g(y),f(y)}(g_{x,y}(a)\ot_{B_{g(y),g(y)}}\alpha_y)=
\ol{m}_{g(x),f(x),f(y)}(\alpha_x\ot_{B_{g(x),g(x)}}f_{x,y}(a)),$$
for all $x,y\in X$ and $a\in A_{x,y}$.\\

Let $A$ be a Morita context, and take $x\neq y\in X$. Take $p,r\in A_{x,y}$ and $q\in A_{y,x}$.
It follows from \equref{6b1.1} that
$$\ol{m}_{x,y,x}(p\ot_{A_{y,y}}q)r=p\ol{m}_{y,x,y}(q\ot_{A_{x,x}} r).$$
It follows that $(A_{x,x},A_{y,y},A_{x,y},A_{y,x},\ol{m}_{x,y,x},\ol{m}_{y,x,y})$ is a Morita
context. In particular, Morita contexts with a pair as underlying class are Morita contexts in the classical sense.

\begin{theorem}\thlabel{6b.2}
The 2-categories ${}_{\Mm_k}\Cat$ and
${}_k\dul{\rm Mor}$ are isomorphic.
\end{theorem}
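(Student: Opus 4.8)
The plan is to construct an explicit 2-functor $\Phi:\ {}_{\Mm_k}\Cat\to {}_k\dul{\rm Mor}$ together with an explicit inverse, since the claim is a genuine 2-isomorphism and not merely a biequivalence. On $0$-cells, start from a $k$-linear category $A$ with underlying class $X$. First I would observe that the diagonal pieces become algebras: $(A_{x,x},m_{x,x,x},\eta_x)$ is a $k$-algebra by \equref{1.1} and \equref{1.2} specialized to the single index $x$. Next, each $A_{x,y}$ becomes an $(A_{x,x},A_{y,y})$-bimodule, with left action $m_{x,x,y}$ and right action $m_{x,y,y}$; the bimodule axioms are exactly the instances of \equref{1.1} in which the outer indices are repeated, and the unit normalizations come from \equref{1.2}.

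The heart of the construction is the balancing step. Using the instance of \equref{1.1} with repeated middle object $A_{y,y}$ one checks that $m_{x,y,z}((p\cdot b)\ot q)=m_{x,y,z}(p\ot (b\cdot q))$ for $b\in A_{y,y}$, so $m_{x,y,z}$ is $A_{y,y}$-balanced and therefore factors uniquely as $\ol{m}_{x,y,z}\circ\pi$ through $A_{x,y}\ot_{A_{y,y}}A_{y,z}$, where $\pi$ is the canonical surjection. That $\ol{m}_{x,y,z}$ is an $(A_{x,x},A_{z,z})$-bimodule map and satisfies the Morita associativity \equref{6b1.1} is then a direct translation of the remaining instances of \equref{1.1}; the unit conditions of \deref{6b.1}, namely that $\ol{m}_{x,x,y}$ and $\ol{m}_{x,y,y}$ are the canonical isomorphisms, are exactly \equref{1.2}. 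This defines $\Phi(A)$. Conversely, given a Morita context I would set $m_{x,y,z}=\ol{m}_{x,y,z}\circ\pi$ and let $\eta_x$ be the unit of $A_{x,x}$; then \equref{1.1} and \equref{1.2} for this data follow from \equref{6b1.1} and the unit normalizations. These two assignments are visibly mutually inverse on $0$-cells, since factoring an action through the balanced tensor product and then precomposing with $\pi$ returns the original map.

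On $1$-cells I would send a $\Vv$-functor $f$ to the same family $(f_{x,y})$. The functor axioms $f_{x,z}\circ m_{x,y,z}=m_{f(x),f(y),f(z)}\circ(f_{x,y}\ot f_{y,z})$ and $f_{x,x}\circ\eta_x=\eta_{f(x)}$ unpack, under $\Phi$, into precisely the three conditions defining a $1$-cell in ${}_k\dul{\rm Mor}$: the diagonal case shows $f_{x,x}$ is an algebra map, the mixed cases give $f_{x,y}(a'aa'')=f_{x,x}(a')f_{x,y}(a)f_{y,y}(a'')$, and the general case, read through the balanced tensor product via the map $g\ot_f h$ recalled above, yields the compatibility with $\ol{m}$. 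For $2$-cells, a $\Vv$-natural transformation is by \equref{1.3} a family $\alpha_x\in B_{g(x),f(x)}$ with $g_{x,y}(a)\alpha_y=\alpha_x f_{x,y}(a)$, which is word-for-word the defining relation for a $2$-cell of Morita contexts once $\ol{m}$ is spelled out. Finally I would check that the assignment respects identities and both compositions of $1$- and $2$-cells; this is immediate because the composition formulas in the two $2$-categories are expressed through the same multiplications $m$, respectively $\ol{m}$.

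The main obstacle is the balancing step and its converse: establishing that the $k$-bilinear multiplication descends to a well-defined map over $A_{y,y}$, and that precomposition with $\pi$ inverts this descent, so that the two $0$-cell constructions are strict inverses rather than merely naturally isomorphic. Once this is in place, every remaining verification is a mechanical rewriting of the $\Vv$-category axioms as Morita-context axioms, and the bijectivity on $1$- and $2$-cells is automatic, since the underlying families of maps are literally unchanged.
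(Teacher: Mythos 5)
Your proposal is correct and follows essentially the same route as the paper: the paper also passes from $m_{x,y,z}$ to $\ol{m}_{x,y,z}$ by observing $A_{y,y}$-balancedness via \equref{1.1}, recovers $m_{x,y,z}$ as $\ol{m}_{x,y,z}$ composed with the canonical surjection, and notes that the $1$- and $2$-cell data are literally the same families of maps on both sides. The only difference is that the paper leaves most verifications (including the strictness of the inverse on $0$-cells, which you rightly flag as the one point needing care) to the reader, whereas you spell them out.
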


\begin{proof} (sketch)
Let $A$ be a $k$-linear category, with underlying class $X$. It is clear that $A_{x,x}$ is a $k$-algebra,
and that $A_{x,y}$ is an $(A_{x,x},A_{y,y})$-bimodule, for all $x,y\in X$. Take $a\in A_{x,y}$, $b\in A_{y,y}$ and
$c\in A_{y,z}$. From \equref{1.1}, it follows that
$m_{x,y,z}(ab\ot c)=m_{x,y,z}(a\ot bc)$, so we have a well-defined map
$$\ol{m}_{x,y,z}:\ A_{x,y}\ot_{A_{y,y}} A_{y,z}\to A_{x,z},~~\ol{m}_{x,y,z}(a\ot_{A_{y,y}} c)={m}_{x,y,z}(a\ot c).$$
From \equref{1.2}, it follows that $\ol{m}_{y,y,z}(1_y \ot_{A_{y,y}} c)={m}_{y,y,z}(1_y\ot c)=c$, so that
$\ol{m}_{y,y,z}$ is the canonical isomorphism $A_{y,y}\ot_{A_{y,y}} A_{y,z}\cong A_{y,z}$. It is easy to verify
that the associativity axiom \equref{6b1.1} is satisfied, and it follows that $A$ is a Morita $X$-context.\\
Conversely, let $A$ be a Morita context with underlying class $X$. Define $m_{x,y,z}$ as the composition of $\ol{m}_{x,y,z}$
and the canonical surjection $A_{x,y}\ot A_{y,z}\to A_{x,y}\ot_{A_{y,y}} A_{y,z}$. It is a straightforward
verification to check that $A$ is $k$-linear category.\\
It is clear that these two constructions are inverses, and this defines 2-functors between our two 2-categories
at the level of 0-cells. We leave it to the reader that we have a one-to-one
correspondence between  1-cells and 2-cells in ${}_{\Mm_k}\Cat$ and ${}_k\dul{\rm Mor}$.
\end{proof}

\begin{theorem}\thlabel{6b.3}
Let $A$ be a $k$-linear category with underlying class $X$, and consider the corresponding Morita context. The following
statements are equivalent.
\begin{enumerate}
\item $m_{x,y,z}$ is surjective, for all $x,y,z\in X$
\item $m_{x,y,x}$ is surjective, for all $x,y\in X$;
\item $\ol{m}_{x,y,x}$ is bijective, for all $x,y\in X$;
\item $\ol{m}_{x,y,z}$ is bijective, for all $x,y,z\in X$.
\end{enumerate}
$A$ is called strict if these four equivalent conditions are satisfied.
\end{theorem}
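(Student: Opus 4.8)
The plan is to prove the four conditions equivalent by first recording one trivial observation that disposes of all the easy implications, and then isolating the single substantial step. Since $\ol{m}_{x,y,z}$ is by construction the factorization of $m_{x,y,z}$ through the canonical surjection $A_{x,y}\ot A_{y,z}\to A_{x,y}\ot_{A_{y,y}}A_{y,z}$ (this is exactly how $m_{x,y,z}$ is recovered from $\ol m_{x,y,z}$ in \prref{3b.1}'s analogue here), the maps $m_{x,y,z}$ and $\ol{m}_{x,y,z}$ have the same image. Hence $m_{x,y,z}$ is surjective if and only if $\ol{m}_{x,y,z}$ is surjective. From this remark the implications $(4)\Rightarrow(1)$, $(4)\Rightarrow(3)$, $(3)\Rightarrow(2)$ and $(1)\Rightarrow(2)$ are all immediate: each is either the specialization $z=x$, or the passage from bijectivity to surjectivity combined with the observation. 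Thus everything reduces to proving $(2)\Rightarrow(4)$.

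For $(2)\Rightarrow(4)$, fix $x,y,z\in X$. By $(2)$ applied to the pair $(x,y)$, the map $m_{x,y,x}$ is surjective, so $1_x$ lies in its image and we may choose a finite decomposition $1_x=\sum_i p_iq_i$ with $p_i\in A_{x,y}$ and $q_i\in A_{y,x}$; this will play the role of a dual basis. Define a $k$-linear map $\nu\colon A_{x,z}\to A_{x,y}\ot_{A_{y,y}}A_{y,z}$ by $\nu(c)=\sum_i p_i\ot_{A_{y,y}}q_ic$, where $q_ic\in A_{y,z}$ is formed via $m_{y,x,z}$. The claim is that $\nu$ is a two-sided inverse of $\ol{m}_{x,y,z}$, which gives $(4)$.

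The heart of the argument, and the step I expect to require the most care, is checking both inverse identities by combining the decomposition of $1_x$ with the associativity axiom \equref{6b1.1} and the $A_{y,y}$-balancing of the tensor product. For the right inverse one computes $\ol{m}_{x,y,z}(\nu(c))=\sum_i p_i(q_ic)=\bigl(\sum_i p_iq_i\bigr)c=1_xc=c$, using associativity. For the left inverse, take $a\in A_{x,y}$ and $b\in A_{y,z}$; then $\nu(\ol m_{x,y,z}(a\ot_{A_{y,y}}b))=\sum_i p_i\ot_{A_{y,y}}q_i(ab)=\sum_i p_i\ot_{A_{y,y}}(q_ia)b$, and since $q_ia\in A_{y,y}$ one slides it across the tensor to obtain $\sum_i p_i(q_ia)\ot_{A_{y,y}}b=\bigl(\sum_i p_iq_i\bigr)a\ot_{A_{y,y}}b=a\ot_{A_{y,y}}b$, the two rebracketings again being instances of \equref{6b1.1}. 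The only delicate points are the well-definedness of $\nu$ and keeping track of which bimodule structures justify each rebracketing and each slide across $\ot_{A_{y,y}}$; these are routine once the Morita-context relations are in place. This yields $(2)\Rightarrow(4)$, and together with the easy implications above it closes the circle, proving all four statements equivalent and justifying the term \emph{strict}.
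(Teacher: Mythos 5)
Your proof is correct, but it takes a genuinely different route from the paper's. The paper handles the hard direction in two stages: first $(2)\Rightarrow(3)$, by observing that $(A_{x,x},A_{y,y},A_{x,y},A_{y,x},\ol{m}_{x,y,x},\ol{m}_{y,x,y})$ is a classical Morita context and invoking the standard fact (cited from Bass) that a surjective Morita pairing is automatically bijective; then $(3)\Rightarrow(4)$, by manipulating the associativity identity \equref{6b1.1} in the form $\ol{m}_{x,y,z}\circ (A_{x,y}\ot_{A_{y,y}} \ol{m}_{y,x,z})= \ol{m}_{x,x,z}\circ (\ol{m}_{x,y,x}\ot_{A_{x,x}}A_{x,z})$ to extract one-sided inverses, and then running a separate kernel argument (using an element $\beta$ with $\ol{m}_{y,x,y}(\beta)=1_y$) to upgrade these to bijectivity. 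You instead prove $(2)\Rightarrow(4)$ in one step by choosing a decomposition $1_x=\sum_i p_iq_i$ and writing down the explicit two-sided inverse $\nu(c)=\sum_i p_i\ot_{A_{y,y}}q_ic$; your verifications of $\ol{m}_{x,y,z}\circ\nu=\mathrm{id}$ and $\nu\circ\ol{m}_{x,y,z}=\mathrm{id}$ are correct instances of \equref{6b1.1} and of $A_{y,y}$-balancedness (and, contrary to your worry, well-definedness of $\nu$ is automatic since its domain $A_{x,z}$ is not a quotient; at most the map could depend on the chosen decomposition of $1_x$, which is harmless). In effect you have inlined the proof of the classical Morita property rather than citing it, which makes your argument self-contained and shorter, and yields an explicit formula for $(\ol{m}_{x,y,z})^{-1}$; the paper's version is more modular in that it reuses the known result and the associativity identity abstractly, at the cost of a longer chain of implications. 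One small slip: the factorization $m_{x,y,z}=\ol{m}_{x,y,z}\circ(\text{canonical surjection})$ that you use for the easy implications comes from \thref{6b.2}, not from \prref{3b.1}.
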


\begin{proof}
The implications
$\ul{4)\Rightarrow 1)\Rightarrow 2)}$ are obvious.\\
$\ul{2)\Rightarrow 3)}$. If $m_{x,y,x}$ is surjective, then $\ol{m}_{x,y,x}$ is
also surjective. We have seen that $(A_{x,x},A_{y,y},A_{x,y},A_{y,x},\ol{m}_{x,y,x},\ol{m}_{y,x,y})$
is a Morita context, hence surjectivity of $\ol{m}_{x,y,x}$ implies injectivity, by a
classical property of Morita contexts, see \cite{Bass}.\\
$\ul{3)\Rightarrow 4)}$. For all $x,y\in X$, we have that $\ol{m}_{x,x,y}$ and $\ol{m}_{x,y,y}$ are
bijective (by definition), and $\ol{m}_{x,y,x}$ is bijective by assumption.
It follows from \equref{6b1.1} that
$$\ol{m}_{x,y,z}\circ (A_{x,y}\ot_{A_{y,y}} \ol{m}_{y,x,z})=
\ol{m}_{x,x,z}\circ (\ol{m}_{x,y,x}\ot_{A_{x,x}}A_{x,z}).$$
The right hand side is invertible, and therefore  $\ol{m}_{x,y,z}\circ (A_{x,y}\ot_{A_{y,y}} \ol{m}_{y,x,z})$
is also invertible. This implies that $\ol{m}_{x,y,z}$ has a right inverse, and
that $A_{x,y}\ot_{A_{y,y}} \ol{m}_{y,x,z}$ has a left inverse. Having a right inverse,
$\ol{m}_{x,y,z}$ is surjective, for all $x,y,z\in X$.\\
It also follows that $A_{y,x}\ot_{A_{x,x}} A_{x,y}\ot_{A_{y,y}} \ol{m}_{y,x,z}$ and
$\ol{m}_{y,x,y} \ot_{A_{y,y}} \ol{m}_{y,x,z}$ have a left inverse, because
$\ol{m}_{y,x,y}$ is bijective. Let $f$ be the left inverse of $\ol{m}_{y,x,y} \ot_{A_{y,y}} \ol{m}_{y,x,z}$,
and take $\alpha\in \Ker \ol{m}_{y,x,z}$. $\ol{m}_{y,x,y}$ is surjective, hence there exists
$\beta\in A_{y,x}\ot_{A_{x,x}} A_{x,y}$ such that $\ol{m}_{y,x,y}(\beta)=1_y$. Now
$$\beta\ot_{A_{y,y}}\alpha=
\bigl( f\circ (\ol{m}_{y,x,y} \ot_{A_{y,y}} \ol{m}_{y,x,z})\bigr)(\beta\ot_{A_{y,y}}\alpha)=0,$$
and
$$0=\ol{m}_{y,x,y}(\beta) \ot_{A_{y,y}}\alpha=1_y\ot_{A_{y,y}}\alpha$$
in $A_{y,y}\ot_{A_{y,y}}A_{y,x}\ot_{A_{x,x}} A_{x,z}\cong A_{y,x}\ot_{A_{x,x}} A_{x,z}$, and, finally,
$\alpha=0$. We conclude that $\ol{m}_{y,x,z}$ is injective.
\end{proof}

\begin{example}\exlabel{6b.4}
The category $A$ of $k$-progenerators, is a strict
$k$-linear category. For two finitely generated projective $k$-modules $P$ and $Q$,
we have that $A_{P,Q}=\Hom(Q,P)$, and $m_{P,Q,P}:\ A_{P,Q}\ot A_{Q,P}\to A_{P,P}$ is given by
composition: $m_{P,Q,P}(f\ot g)=f\circ g$. We have to show that $m_{P,Q,P}$ is surjective.\\
$Q$ is a generator of ${}_k\Mm$, so there exist $q_i\in Q$ and $q_i^*\in Q^*$ such that
$\sum_i \lan q_i^*,q_i\ran =1$.\\
$P$ is finitely generated projective, so there exist $p_j\in P$ and $p_j^*\in P^*$ such that
$p=\sum_j \lan p_j^*,p\ran p_j$, for all $p\in P$. Now consider
\begin{eqnarray*}
f_{ij}:\ Q\to P&;&f_{ij}(q)=\lan q_i^*,q\ran p_j;\\
g_{ij}:\ P\to Q&;&g_{ij}(p)=\lan p_j^*,p\ran q_i.
\end{eqnarray*}
Now
$$m_{P,Q,P}(\sum_{i,j} f_{ij}\ot g_{ij})(p)=\sum_{i,j} \lan p_j^*,p\ran\lan q_i^*,q_i\ran p_j=p,$$
hence $m_{P,Q,P}(\sum_{i,j} f_{ij}\ot g_{ij})=P$ and $m_{P,Q,P}$ is surjective.
\end{example}

\begin{example}\exlabel{6b.4a}
Let $A$ be a $G$-graded $k$-algebra, and consider the corresponding $k$-linear category
$K(A)$ (see \prref{4.2}). $K(A)$ is strict if and only if the multiplication maps
$A_{g^{-1}h}\ot A_{h^{-1}g}\to A_e$ are surjective, for all $g,h\in G$. This is equivalent
to surjectivity of $A_{g^{-1}}\ot A_{g}\to A_e$, for all $g\in G$. This is one of the equivalent
definitions of a strongly graded $k$-algebra, see for example \cite{NVO}. We conclude that $K(A)$
is strict if and only if $A$ is a strongly graded $k$-algebra.
\end{example}

Now assume that $A$ is a $\Cog(\Mm_k)$-category. It follows from the axioms that every
$A_{x,x}$ is a bialgebra and that every
$A_{x,y}$ is an $(A_{x,x},A_{y,y})$-bimodule coalgebra. In this case the induction
functors $A_{x,y}\ot -:\ {}_{A_{y,y}}\Mm\to {}_{A_{x,x}}\Mm$ are comonoidal. 

\begin{example}
Let $H$ be Hopf algebra with bijective antipode $S$, and let 
$A$ be a faithfully flat right $H$-Galois object.
In \cite{Schauenburg}, a new Hopf algebra $L$ is constructed in such a way that
$A$ is a faitfhully flat left $L$-Galois object, and even an $(L,H)$-bigalois
object. $A^{\rm op}$ is an $(H,L)$-bigalois object (see \cite[Remark 4.4]{Schauenburg}).
The left $H$-coaction on $A^{\rm op}$ is the following:
$$\lambda(a)=S^{-1}(a_{[1]})\ot a_{[0]}.$$
We now have a dual $\Alg(\Mm_k)$-category $A$ with underlying class $\{x,y\}$ defined as follows:
$$A_{x,x}= H;~~A_{y,y}=L;~~A_{x,y}=A;~~A_{y,x}=A^{\rm op}.$$
$A$ is even a dual Hopf category; the antipode maps are the following:
$S_H:\ H\to H$, $S_L:\ L\to L$ and the identity $A_{x,y}=A\to A_{y,x}=A^{\rm op}$.\\
Now let $H$ be finitely generated and projective; then $A$ and $L$ are also finitely
generated and projective, and the dual category of $A$ is an example of a $k$-linear
Hopf category.
\end{example}

\section{Hopf modules and the fundamental theorem}\selabel{9}
Let $\Vv$ be a strict monoidal category with equalizers, and let $A$ be a $\Cog(\Vv)$-category,
with underlying class $|A|=X$. Assume that $M\in \Vv(X)$, with the following additional structure:
\begin{itemize}
\item $M\in \Vv_A$ in the sense of \deref{3.1}, with structure morphisms $\psi_{x,y,z}:\ M_{x,y}\ot A_{y,z}\to
M_{x,z}$;
\item $M\in \Vv^A$, that is, $M$ is a right comodule over $A$ considered as a coalgebra in $\Vv(X)$; this
means that every $M_{x,y}$ is a right $A_{x,y}$-comodule, with coaction $\rho_{x,y}:\ M_{x,y}\to M_{x,y}\ot A_{x,y}$.
\end{itemize}
Recall that $A\bullet A$ is also a $\Vv$-category. $M\bullet A\in \Vv_{A\bullet A}$, with structure maps
$$\psi^{M\bullet A}_{x,y,z}=(\psi_{x,y,z}\ot m_{x,y,z})\circ (M_{x,y}\ot c_{A_{x,y},A_{y,z}}\ot A_{y,z}).$$
$M$ is called a Hopf module if the compatibility relation
\begin{equation}\eqlabel{9.0.1}
\rho_{x,z}\circ \psi_{x,y,z}= \psi^{M\bullet A}_{x,y,z} \circ (\rho_{x,y}\ot A_{y,z})
\end{equation}
holds for all $x,y,z\in X$. A morphism between Hopf modules is a morphism in $\Vv$ that is a morphism in
$\Vv_A$ and $\Vv^A$. The category of Hopf modules is denoted $\Vv(X)_A^A$.\\
We introduce the category $\Dd(X)$ ($\Dd$ stands for ``diagonal''). Its objects are families of objects
in $\Vv$ indexed by $X$, and a morphism $N\to N'$ consists of a family of morphisms $N_x\to N'_x$ in $\Vv$.

\begin{proposition}\prlabel{9.1}
We have a pair of adjoint functors $(F,G)$ between $\Dd(X)$ and $\Vv(X)_A^A$.
\end{proposition}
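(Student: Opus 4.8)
The plan is to exhibit $(F,G)$ as a free/coinvariants adjunction, the categorical analogue of the classical pair $\bigl(-\ot H,\,(-)^{{\rm co}H}\bigr)$ that underlies the fundamental theorem. First I would define the free Hopf module functor $F:\ \Dd(X)\to \Mm_k(X)_A^A$ on objects by $F(N)_{x,y}=N_x\ot A_{x,y}$, equipped with the right $A$-action $N_x\ot m_{x,y,z}$, that is $(n\ot a)\cdot b=n\ot ab$, and the right $A$-coaction $N_x\ot \Delta_{x,y}$, that is $\rho_{x,y}(n\ot a)=n\ot a_{(1)}\ot a_{(2)}$. Associativity and the counit/unit properties of the action and coaction are inherited directly from those of $A$, and the Hopf compatibility \equref{7.1.1} holds precisely because the comultiplication of $A$ is multiplicative (the defining feature of a $\Cog(\Mm_k)$-category): one finds $\rho_{x,z}\bigl((n\ot a)b\bigr)=n\ot(ab)_{(1)}\ot(ab)_{(2)}=n\ot a_{(1)}b_{(1)}\ot a_{(2)}b_{(2)}$, which equals $(n\ot a)_{[0]}b_{(1)}\ot(n\ot a)_{[1]}b_{(2)}$. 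On a morphism $\nu$ in $\Dd(X)$ I set $F(\nu)_{x,y}=\nu_x\ot A_{x,y}$.

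Next I would define the coinvariants functor $G:\ \Mm_k(X)_A^A\to \Dd(X)$ on the diagonal by $G(M)_x=M_{x,x}^{{\rm co}A}=\{m\in M_{x,x}\mid \rho_{x,x}(m)=m\ot 1_x\}$, with $G$ acting on a Hopf-module morphism by restriction; this is well defined because a morphism of Hopf modules is $A$-colinear and hence carries coinvariants to coinvariants.

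Then I would construct the natural bijection $\Hom_{\Mm_k(X)_A^A}(F(N),M)\cong \Hom_{\Dd(X)}(N,G(M))$ directly. From a Hopf-module morphism $\varphi:\ F(N)\to M$ I extract $\phi_x:\ N_x\to M_{x,x}$, $\phi_x(n)=\varphi_{x,x}(n\ot 1_x)$; evaluating colinearity of $\varphi$ at $n\ot 1_x$ and using $\Delta_{x,x}(1_x)=1_x\ot 1_x$ from \equref{2.1.2} yields $\rho_{x,x}(\phi_x(n))=\phi_x(n)\ot 1_x$, so $\phi_x$ indeed lands in $G(M)_x$. Conversely, from $\phi:\ N\to G(M)$ I build $\varphi_{x,y}(n\ot a)=\phi_x(n)\cdot a$; $A$-linearity is immediate from associativity of the action, while colinearity is the key computation, where I invoke the compatibility \equref{7.1.1} and the coinvariance of $\phi_x(n)$ to get $\rho_{x,y}(\phi_x(n)\cdot a)=\phi_x(n)a_{(1)}\ot 1_x a_{(2)}=\phi_x(n)a_{(1)}\ot a_{(2)}$, which matches $(\varphi_{x,y}\ot A_{x,y})\rho_{x,y}(n\ot a)$. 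That the two assignments are mutually inverse follows from the unit axiom $1_x a=a$ (\equref{1.2}) and the module unit property $m\cdot 1_x=m$, and naturality in both $N$ and $M$ is routine.

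The main obstacle is bookkeeping rather than conceptual: one must check carefully that each construction stays inside the correct category, namely that $\phi_x(n)$ is genuinely coinvariant (which forces the use of $\Delta_{x,x}(1_x)=1_x\ot 1_x$) and that the $\varphi$ rebuilt from $\phi$ is genuinely a morphism of Hopf modules (which forces the use of \equref{7.1.1}). I emphasize that no antipode enters at this stage: the adjunction holds for an arbitrary $\Cog(\Mm_k)$-category $A$, and it is only the later promotion of $(F,G)$ to an equivalence, the fundamental theorem proper, that will require $A$ to be a Hopf category.
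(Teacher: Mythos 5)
Your proposal is correct and follows essentially the same route as the paper: the functors $F$ and $G$ are defined identically, and your hom-set bijection $\varphi\mapsto\varphi_{x,x}(-\ot 1_x)$, $\phi\mapsto\phi_x(-)\cdot(-)$ is exactly the unit/counit presentation the paper uses, with $\eta^N_x(n)=n\ot 1_x$ and $\varepsilon^M_{x,y}(m\ot a)=ma$. You in fact carry out more of the verification (coinvariance of $\phi_x(n)$, colinearity via \equref{7.1.1}) than the paper, which leaves these details to the reader.
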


\begin{proof}
We define a functor $F:\ \Dd(X)\to \Vv(X)_A^A$ as follows. For $N\in \Dd(X)$, let $F(N)\in\Vv(X)_A^A$ be given by the data
$$F(N)_{x,y}= N_x\ot A_{x,y};~~\psi_{x,y,z}=N_x\ot m_{x,y,z};~~\rho_{x,y}=N_x\ot \Delta_{x,y}.$$
For $f:\ N\to N'$ in $\Dd(X)$, let $F(f)_{x,y}=f_{x}\ot A_{x,y}$. Verification of further details is straightforward.\\
Now we define $G:\ \Vv(X)_A^A\to \Dd(X)$. Let $M\in \Vv(X)_A^A$. $M_{x,x}$ is a right $A_{x,x}$-module, for every $x\in X$,
and we define $G(M)=M^{{\rm co}A}$ as follows:
$$G(M)_x=M^{{\rm co}A}_x=M_{x,x}^{{\rm co} A_{x,x}},$$
the equalizer of the parallel morphisms $\rho_{x,x},~M_{x,x}\ot \eta_x:\ M_{x,x}\to M_{x,x}\ot A_{x,x}$.
For $g:\ M\to M'$ in $\Vv(X)_A^A$, $G(g)=g^{{\rm co}A}$ is defined as follows: $G(g)_x=g^{{\rm co}A}_x$
is the unique morphism in $\Vv$ making the diagram
$$\xymatrix{M^{{\rm co}A}_x\ar[rr]^{i_x}\ar@{.>}[d]_{\exists ! g^{{\rm co}A}_x}&&
M_{xx}\ar[d]_{f_{x,x}}\ar@<-.5ex>[rr]_(.42){\rho_{x,x}} 
\ar@<.5ex>[rr]^(.43){M_{x,x}\ot \eta_X}&&
M_{x,x}\ot A_{x,x}\ar[d]^{f_{x,x}\ot A_{x.x}}\\
{M'}^{{\rm co}A}_x\ar[rr]^{i_x}&&
M'_{x,x}\ar@<-.5ex>[rr]_(.42){\rho'_{x,x}} 
\ar@<.5ex>[rr]^(.43){M'_{x,x}\ot \eta_X}&&M'_{x,x}\ot A_{x,x}}$$
commutative. The existence and uniqueness of $g^{{\rm co}A}_x$ is guaranteed by the universal property of equalizers.\\
Next we describe the unit and the counit of the adjunction. For $N\in \Dd(X)$, the unit $\eta_N:\ N\ot GF(N)$
has $X$ component $\eta^N_x:\ N_x\to GF(N)_x=(N_x\ot A_{x,x})^{{\rm co}A_{x,x}}$, the unique morphism in $\Vv$
such that
\begin{equation}
i\circ \eta_x^N=N_x\ot \eta_x:\ N_x\to (N_x\ot A_{x,x})^{{\rm co}A_{x,x}}\to N_x\ot A_{x,x}.
\end{equation}\eqlabel{9.1.1}
For $M\in \Vv(X)_A^A$, the $(x,y)$-component of $\varepsilon^M:\ FG(M)\to M$ is
$$\varepsilon^M=\psi_{x,x,y}\circ (i\ot A_{x,y}):\ FG(M)_{x,y}=M^{{\rm co}A}_x\ot A_{x,y}\to M_{x,y}.$$
In order to show that $(F,G)$ is an adjoint pair, we have verify that
$$F(N)=\varepsilon^{F(N)}\circ F(\eta^N)~~{\rm and}~~G(M)=G(\varepsilon_M)\circ \eta^{G(M)},$$
for all $N\in \Dd(X)$ and $M\in \Vv(X)_A^A$. Now
\begin{eqnarray*}
&&\hspace*{-15mm}
\varepsilon^{F(N)}_{x,y}\circ F(\eta^N)_{x,y}= (N_x\ot m_{x.x.y})\circ (i\ot A_{x,y})\circ (\eta_x^N\ot A_{x,y})\\
&=& (N_x\ot m_{x,x,y})\circ (N_x\ot \eta_x\ot A_{x,y})= N_x\ot A_{x,y}=F(N)_{x,y},
\end{eqnarray*}
proving the first formula. For the second formula, we consider the diagram
$$\xymatrix{
M^{{\rm co}A}_x\ar[d]_{\eta^{G(M)}_x}\ar[rrd]^{M^{{\rm co}A}_x\ot \eta_x}&&\\
(M^{{\rm co}A}_x \ot A_{x,x})^{{\rm co} A_{x,x}}\ar[d]_{(i\ot A_{x,x})^{{\rm co} A_{x,x}}}\ar[rr]^{i}&&
M^{{\rm co}A}_x \ot A_{x,x}\ar[d]^{i\ot A_{x,x}}\\
(M_{x,x}\ot A_{x,x})^{{\rm co}A_{x,x}}\ar[d]_{\psi^{{\rm co}A_{x,x}}_{x,x,x}}\ar[rr]^{i}&&
M_{x,x}\ot A_{x,x}\ar[d]^{\psi_{x,x,x}}\\
M^{{\rm co}A}_x \ar[rr]^{i}&& M_x}$$
The commutativity of the triangle follows from the definition of $\eta^{G(M)}_x$; the commutativity of the
two squares follows from the definition of $G$ at the level of morphisms. Now
$$\psi_{x,x,x}\circ (i\ot A_{x,x})\circ (M^{{\rm co}A}_x\ot \eta_x)=
\psi_{x,x,x} \circ (M_{x,x}\ot \eta_x)\circ i=i,$$
and it follows from the uniqueness in the universal property of equalizers that the vertical composition
in the diagram is the identity on $M^{{\rm co}A}_x=G(M)_x$; this vertical composition is the
$x$-component of the right hand side in the second formula.
\end{proof}

Let $A$ be a $\Cog(\Vv)$-category, with underlying class $|A|=X$. For all
$x,y,z\in X$, we consider the canonical map
$$\can^z_{x,y}=(m_{z,x,y}\ot A_{x,})\circ (A_{z,x}\ot \Delta_{x,y}):\ A_{z,x}\ot A_{x,y}\to A_{z,y}\ot A_{x,y}.$$

With respect to the observations made at the end of \seref{6}, the following theorem should be compared to \cite[Theorem 7.14]{BL}.

\begin{theorem}\thlabel{9.2} {\bf (Fundamental Theorem for Hopf Modules)}
Let $\Vv$ be a strict braided monoidal category with equalizers.
For a $\Cog(\Vv)$-category $A$ with underlying class $X$, the following assertions
are equivalent.
\begin{enumerate}
\item $A$ is a Hopf $\Vv$-category;
\item the pair of adjoint functors $(F,G)$ from \prref{9.1} is a pair of inverse
equivalences between the categories $\Dd(X)$ and $\Vv(X)_A^A$;
\item the functor $G$ from \prref{9.1} is fully faithful;
\item $\can^z_{x,y}$ is an isomorphism, for all $x,y,z\in X$;
\item $\can^x_{x,y}$ has a left inverse $f_{x,y}$ and $\can^y_{x,y}$ is an isomorphism, with inverse $g_{x,y}$, for all $x,y\in X$.
\end{enumerate}
\end{theorem}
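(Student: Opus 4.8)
The plan is to prove the cycle $(1)\Rightarrow(2)\Rightarrow(3)\Rightarrow(4)\Rightarrow(5)\Rightarrow(1)$, but first I would record a fact about the adjunction of \prref{7.1} that collapses $(2)$ and $(3)$. The unit $\eta^N$ is an isomorphism for \emph{every} $\Cog(\Mm_k)$-category $A$: for a coinvariant $\sum_i n_i\ot a_i\in (N_x\ot A_{x,x})^{{\rm co}A_{x,x}}$, applying $\id\ot\varepsilon_{x,x}\ot\id$ to the coinvariance identity yields $\sum_i n_i\ot a_i=\eta^N_x(\sum_i n_i\varepsilon_{x,x}(a_i))$, so $\eta^N_x$ is bijective; in particular $A_{x,x}^{{\rm co}A_{x,x}}=k1_x$. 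Hence $F$ is always fully faithful, and since a right adjoint is fully faithful exactly when the counit of the adjunction is iso, $(3)$ says precisely that $\varepsilon^M$ is iso for all $M$; together with the unit being iso this makes $(F,G)$ an adjoint equivalence, so $(2)\Leftrightarrow(3)$.

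For $(1)\Rightarrow(2)$ I would run the classical Fundamental Theorem computation. Given the antipode, define $\theta_{x,y}\colon M_{x,y}\to M_{x,x}^{{\rm co}A_{x,x}}\ot A_{x,y}$ by $\theta_{x,y}(m)=m_{[0]}S_{x,y}(m_{[1](1)})\ot m_{[1](2)}$; using \equref{7.1.1} one checks that the first tensor factor lands in the coinvariants, and then that $\theta$ is a two-sided inverse of $\varepsilon^M$. The identity $\varepsilon^M\circ\theta=\id$ uses associativity of the action together with \equref{2.2.2} in the form $S_{x,y}(m_{[1](1)})m_{[1](2)}=\varepsilon_{x,y}(m_{[1]})1_y$, followed by the counit law of the coaction, and $\theta\circ\varepsilon^M=\id$ is the dual computation.

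The implication $(3)\Rightarrow(4)$ is where I would exploit that a fully faithful functor reflects isomorphisms. Fix $z\in X$ and consider two Hopf modules: $M^{(z)}$ with $M^{(z)}_{x,y}=A_{z,x}\ot A_{x,y}$ (this is $F(N^{(z)})$ for $N^{(z)}_x=A_{z,x}$, action and coaction on the right factor) and $\widetilde M^{(z)}$ with $\widetilde M^{(z)}_{x,y}=A_{z,y}\ot A_{x,y}$, diagonal action $(q\ot a)c=qc_{(1)}\ot ac_{(2)}$ and coaction $\Delta$ on the last factor. A direct check shows that $\can^z_{x,y}\colon p\ot b\mapsto pb_{(1)}\ot b_{(2)}$ assembles into a morphism $\can^z\colon M^{(z)}\to\widetilde M^{(z)}$ in $\Mm_k(X)_A^A$. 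Since $A_{x,x}^{{\rm co}A_{x,x}}=k1_x$, both $G(M^{(z)})_x$ and $G(\widetilde M^{(z)})_x$ equal $A_{z,x}\ot k1_x$, and $G(\can^z)_x$ is the identity $p\ot 1_x\mapsto p\ot 1_x$. As $G$ reflects isomorphisms, $\can^z$ is an isomorphism of Hopf modules, i.e. each $\can^z_{x,y}$ is bijective. The step $(4)\Rightarrow(5)$ is trivial, being the cases $z=x$ and $z=y$.

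The main obstacle is $(5)\Rightarrow(1)$: reconstructing the antipode from the two bijective diagonal canonical maps. I would set $S_{x,y}=(\id\ot\varepsilon_{x,y})\circ\gamma_{x,y}$, where $\gamma_{x,y}(b)=(\can^y_{x,y})^{-1}(1_y\ot b)=b^{[1]}\ot b^{[2]}\in A_{y,x}\ot A_{x,y}$, so that $b^{[1]}b^{[2]}_{(1)}\ot b^{[2]}_{(2)}=1_y\ot b$ by construction. The key lemma is the translation identity $\gamma_{x,y}(b)=S_{x,y}(b_{(1)})\ot b_{(2)}$: applying the bijective map $\can^y_{x,y}\ot\id$ to both $b^{[1]}\ot b^{[2]}_{(1)}\ot b^{[2]}_{(2)}$ and $b_{(1)}^{[1]}\ot b_{(1)}^{[2]}\ot b_{(2)}$ sends each to $1_y\ot b_{(1)}\ot b_{(2)}$, so the two agree; applying $\id\ot\varepsilon_{x,y}\ot\id$ gives the identity, and multiplying out (combined with $b^{[1]}b^{[2]}=\varepsilon_{x,y}(b)1_y$, obtained by applying $\id\ot\varepsilon$ to the defining relation) yields \equref{2.2.2}. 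Axiom \equref{2.2.1} then comes from the \emph{other} map: using \equref{2.2.2} one computes $\can^x_{x,y}(b_{(1)}S_{x,y}(b_{(2)})\ot b_{(3)})=b_{(1)}\ot b_{(2)}=\can^x_{x,y}(1_x\ot b)$, and injectivity of $\can^x_{x,y}$ gives $b_{(1)}S_{x,y}(b_{(2)})\ot b_{(3)}=1_x\ot b$, whence \equref{2.2.1} after applying the counit. The delicate point throughout is the index bookkeeping: each diagonal canonical map supplies exactly one antipode axiom, so both bijectivity hypotheses in $(5)$ are genuinely used.
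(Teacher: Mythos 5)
Your proof is correct and follows essentially the same route as the paper: the same cycle of implications, the same inverse $m\mapsto m_{[0]}S_{x,y}(m_{[1]})\ot m_{[2]}$ for the counit in $(1)\Rightarrow(2)$, the same auxiliary Hopf module $A_{z,y}\ot A_{x,y}$ for $(3)\Rightarrow(4)$ (your ``$G$ reflects isomorphisms'' phrasing is just the categorical form of the paper's factorization $\can^z_{x,y}=\varepsilon^z_{x,y}\circ(f\ot A_{x,y})$), and the same formula $S_{x,y}=(\id\ot\varepsilon_{x,y})\circ(\can^y_{x,y})^{-1}\circ(\eta_y\ot A_{x,y})$ together with the same colinearity lemma for $(5)\Rightarrow(1)$. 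The only genuine (and welcome) streamlinings are that you isolate the fact that the unit of the adjunction is always invertible, which gives $(2)\Leftrightarrow(3)$ formally, and that you deduce \equref{2.2.1} from \equref{2.2.2} using only injectivity of $\can^x_{x,y}$, where the paper instead invokes the $A$-linearity identity \equref{7.2.2} for the inverse maps.
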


\begin{proof}
$\ul{(1)\Rightarrow (2)}$.
{\bf Part 1.} $\varepsilon^M$ has an inverse $\alpha^M$, for all $M\in \Vv(X)_A^A$.\\
We first show that the morphism
$$\gamma_{x,y}=\psi_{x,y,x}\circ (M_{x,y}\ot S_{x,y})\circ \rho_{x,y}:\ M_{x,y}\to M_{x,x}$$
satisfies the equality
\begin{equation}\eqlabel{9.2.1}
\rho_{x,x}\circ \gamma_{x,y}=(M_{x,x}\ot \eta_x)\circ \gamma_{x,x}.
\end{equation}
\begin{eqnarray*}
&&\hspace*{-15mm}
\rho_{x,x}\circ \gamma_{x,y}=\rho_{x,x}\circ \psi_{x,y,x}\circ (M_{x,y}\ot S_{x,y})\circ \rho_{x,y}\\
&\equal{\equref{9.0.1}}&
(\psi_{x,y,x}\ot m_{x,y,x})\circ (M_{x,y}\ot c_{A_{x,y},A_{y,x}}\ot A_{y,x})\circ (\rho_{x,y}\ot \Delta_{y,x})\\
&&\hspace*{5mm}\circ~ (M_{x,y}\ot S_{x,y})\circ \rho_{x,y}\\
&\equal{\equref{2.13.2}}&
(\psi_{x,y,x}\ot m_{x,y,x})\circ (M_{x,y}\ot c_{A_{x,y},A_{y,x}}\ot A_{y,x})\\
&&\hspace*{5mm}
\circ~ (M_{x,y}\ot A_{x,y}\ot c_{A_{y,x},A_{y,x}})
\circ (\rho_{x,y}\ot S_{x,y}\ot S_{x,y})\\
&&\hspace*{5mm}
\circ~ (M_{x,y}\ot\Delta_{x,y})
\circ \rho_{x,y}\\
&=&
(\psi_{x,y,x}\ot A_{x,x})\circ (M_{x,y}\ot A_{y,x}\ot m_{x,y,x})\circ (M_{x,y}\ot c_{A_{x,y}\ot A_{y,x},A_{y,x}})\\
&&\hspace*{5mm}
\circ~ (M_{x,y}\ot A_{x,y}\ot S_{x,y}\ot S_{x,y}) \circ \rho_{x,y}^3\\
&\equal{(*)}&
(\psi_{x,y,x}\ot A_{x,x})\circ (M_{x,y}\ot c_{A_{x,x},A_{y,x}})\circ (M_{x,y}\ot m_{x,y,x}\circ A_{y,x})\\
&&\hspace*{5mm}
\circ~ (M_{x,y}\ot A_{x,y}\ot S_{x,y}\ot S_{x,y})\circ (M_{x,y}\ot\Delta_{x,y}\ot A_{x,y}) \circ \rho_{x,y}^2\\
&\equal{\equref{2.2.1}}&
(\psi_{x,y,x}\ot A_{x,x})\circ (M_{x,y}\ot c_{A_{x,x},A_{y,x}})\circ (M_{x,y}\ot \eta_x\ot A_{y,x})\\
&&\hspace*{5mm} \circ~ (M_{x,y}\ot \varepsilon_{x,y}\ot S_{x,y}) \circ \rho_{x,y}^2\\
&=& (\psi_{x,y,x}\ot A_{x,x})\circ (M_{x,y}\ot A_{y,x}\ot \eta_x)\circ (M_{x,y}\ot S_{x,y})\circ \rho_{x,y}\\
&=& (M_{x,y}\ot \eta_x)\circ \psi_{x,y,x}\circ (M_{x,y}\ot S_{x,y})\circ \rho_{x,y}= (M_{x,x}\ot \eta_x)\circ \gamma_{x,x}.
\end{eqnarray*}
At $(*)$ we used the naturality of $c$ resulting in the commutative diagram
$$\xymatrix{
A_{x,y}\ot A_{y,x}\ot A_{y,x}\ar[d]_{m_{x,y,x}\ot A_{y,x}}\ar[rr]^{c_{A_{x,y}\ot A_{y,x},A_{y,x}}}&&
A_{y,x}\ot A_{x,y}\ot A_{y,x}\ar[d]^{A_{y,x}\ot m_{x,y,x}}\\
A_{x,x} \ot A_{y,x}\ar[rr]^{c_{A_{x,x},A_{y,x}}}&& A_{y,x} \ot A_{x,x}}$$
From \equref{9.2.1} and the universal property of equalizers, it follows that there is a unique morphism
$\tilde{\gamma}_{x,y}:\ M_{x,y}\to M^{{\rm co}A}_x$ such that $i\circ \tilde{\gamma}_{x,y}=\gamma_{x,y}$.\\
Now we are ready to define $\alpha^M:\ M\to FG(M)$. The $(x,y)$-component is
$$\alpha^M_{x,y}=(\tilde{\gamma}_{x,y}\ot A_{x,y})\circ \rho_{x,y}:\ M_{x,y}\to M^{{\rm co}A}_x\ot A_{x,y}.$$
\begin{eqnarray*}
\varepsilon^M_{x,y}\circ \alpha^M_{x,y}&=&
\psi_{x,x,y}\circ (i\ot A_{x,y})\circ (\tilde{\gamma}_{x,y}\ot A_{x,y})\circ \rho_{x,y}\\
&=& \psi^2_{x,y,x,y}\circ (M_{x,y}\ot S_{x,y}\ot A_{x,y})\circ \rho^2_{x,y}\\
&\equal{\equref{2.2.2}}&
\psi_{x,y,y}\circ (M_{x,y}\ot \eta_y)\circ (M_{x,y}\ot\varepsilon_{x,y})\circ \rho_{x,y}=M_{x,y}.
\end{eqnarray*}
The proof of the fact that $\alpha^M$ is also a left inverse of $\varepsilon^M$ is more involved. We first
compute
$$\rho_{x,y}\circ \psi_{x,x,y}\circ (i\ot A_{x,y}):\ M^{{\rm co}A}_x\ot A_{x,y}\to M_{x,y}\ot A_{x,y}.$$
\begin{eqnarray}
&&\hspace*{-15mm}
\rho_{x,y}\circ \psi_{x,x,y}\circ (i\ot A_{x,y})\nonumber\\
&\equal{\equref{9.0.1}}&
(\psi_{x,x,y}\ot M_{x,x,y})\circ (M_{x,x}\ot c_{A_{x,x},A_{x,y}}\ot A_{x,y})\nonumber\\
&&\hspace*{5mm}\circ~ (\rho_{x,x}\ot\Delta_{x,y})\circ (i\ot A_{x,y})\nonumber\\
&=&
(\psi_{x,x,y}\ot M_{x,x,y})\circ (M_{x,x}\ot c_{A_{x,x},A_{x,y}}\ot A_{x,y})\nonumber\\
&&\hspace*{5mm}\circ~ (M_{x,x}\ot\eta_x\ot \Delta_{x,y})\circ (i\ot A_{x,y})\nonumber\\
&=&
(\psi_{x,x,y}\ot M_{x,x,y})\circ (M_{x,x}\ot A_{x,y}\ot \eta_x\ot A_{x,y})\circ (i\ot \Delta_{x,y})\nonumber\\
&=&
(\psi_{x,x,y}\ot A_{x,y})\circ (i\ot \Delta_{x,y}).\eqlabel{9.2.2}
\end{eqnarray}
Our next step is to compute
\begin{eqnarray*}
&&\hspace*{-15mm}
i\circ \tilde{\gamma}_{x,y}\circ \psi_{x,x,y} \circ (i\ot A_{x,y})\\
&=& \psi_{x,y,x}\circ (M_{x,y}\ot S_{x,y})\circ \rho_{x,y}\circ \psi_{x,x,y} \circ (i\ot A_{x,y})\\
&\equal{\equref{9.2.2}}& 
\psi_{x,y,x}\circ (M_{x,y}\ot S_{x,y})\circ (\psi_{x,x,y}\ot A_{x,y})\circ (i\ot \Delta_{x,y})\\
&=&
\psi_{x,x,y,x}^2\circ (M_{x.x}\ot A_{x,y}\ot S_{x,y})\circ (M_{x.x}\ot \Delta_{x,y})\circ (i\ot A_{x,y})\\
&\equal{\equref{2.2.1}}&
\psi_{x,x,x}\circ (M_{x,x}\ot \eta_x)\circ (M_{x,x}\ot\varepsilon_{x,y})\circ (i\ot A_{x,y})\\
&=& i\ot \varepsilon_{x,y}=i\circ (M^{{\rm co}A}_x\ot \varepsilon_{x,y}).
\end{eqnarray*}
The universal property of equalizers tells us that there is a unique $f:\ M^{{\rm co}A}_x\ot A_{x,y}\to M^{{\rm co}A}_x$
such that $i\circ f=i\ot \varepsilon_{x,y}$. This implies that
\begin{equation}\eqlabel{9.2.3}
\tilde{\gamma}_{x,y}\circ \psi_{x,x,y} \circ (i\ot A_{x,y})=M^{{\rm co}A}_x\ot \varepsilon_{x,y}.
\end{equation}
Finally
\begin{eqnarray*}
&&\hspace*{-12mm}
\alpha^M_{x,y}\circ \varepsilon^M_{x,y}=
(\tilde{\gamma}_{x,y}\ot A_{x,y})\circ \rho_{x,y}\circ \psi_{x,x,y}\circ (i\ot A_{x,y})\\
&\equal{\equref{9.2.2}}&
(\tilde{\gamma}_{x,y}\ot A_{x,y})\circ (\psi_{x,x,y}\ot A_{x,y})\circ (i\ot \Delta_{x,y})\\
&=& (\tilde{\gamma}_{x,y}\ot A_{x,y})\circ (\psi_{x,x,y}\ot A_{x,y})\circ (i\ot A_{x,y}\ot A_{x,y}))\circ (M^{{\rm co}A}_x\ot \Delta_{x,y})\\
&\equal{\equref{9.2.3}}&(M^{{\rm co}A}_x\ot \varepsilon_{x,y}\ot A_{x,y}) \circ (M^{{\rm co}A}_x\ot \Delta_{x,y})= M^{{\rm co}A}_x.
\end{eqnarray*}

{\bf Part 2.} $\eta^N$ has an inverse $\beta^N$, for all $N\in \Dd(X)$.\\
The $x$-component of $\beta^N$ is
$$\beta^N_x=(N_x\ot\varepsilon_{x,x})\circ i:\ (N_x\ot A_{x,x})^{{\rm co}A_{x,x}}\to N_x$$
It is easy to see that
$$\beta^N_x\circ \eta^N_x=(N_x\ot\varepsilon_{x,x})\circ i\circ \eta^N_x
\equal{\equref{9.1.1}}(N_x\ot\varepsilon_{x,x})\circ (N_x\ot \eta_x)=N_x.$$
The universal property of the equalizer entails that there is only one endomorphism $f$ of $(N_x\ot A_{x,x})^{{\rm co}A_{x,x}}$
such that $i\circ f=i$, namely the identity. Now
\begin{eqnarray*}
&&\hspace*{-12mm}
i\circ \eta^N_x\circ \beta^N_x\equal{\equref{9.1.1}} (N_x\ot \eta_x) \circ (N_x\ot\varepsilon_{x,x})\circ i\\
&=&(N_x\ot\varepsilon_{x,x} \ot A_{x,x})\circ (N_x\ot A_{x,x}\ot \eta_x)\circ i\\
&=&(N_x\ot\varepsilon_{x,x} \ot A_{x,x})\circ (N_x\ot \Delta_{x,x})\circ i=i,
\end{eqnarray*}
so it follows that $\eta^N_x\circ \beta^N_x=(N_x\ot A_{x,x})^{{\rm co}A_{x,x}}$.\\

$\ul{(2)\Rightarrow (3)}$ is obvious.\\

$\ul{(3)\Rightarrow (4)}$. For every $z\in X$, consider the object $M^z\in \Vv(X)$ given by
$M^z_{x,y}=A_{z,y}\ot A_{x,y}$. The structure morphisms $\rho^z_{x,y}=A_{z,y}\ot \Delta_{x,y}:\ M^z_{x,y}
\ot A_{x,y}$ and
\begin{eqnarray*}
&&\hspace*{-15mm}
\psi^z_{x,y,u}=(m_{z,y,u}\ot m_{x,y,u})\circ (A_{z,y}\ot c_{A_{x,y},A_{y,u}}\ot A_{y,u})\circ (M^z_{x,y}\ot \Delta_{y,u})\\
&:& M^z_{x,y}\ot A_{y,u}\to M^z_{x,u}
\end{eqnarray*}
make $M^z$ into an object of $\Vv(X)_A^A$. Let us verify that the compatibility relation \equref{9.0.1} holds. We
compute both sides of the equation, and see that they are equal.
\begin{eqnarray*}
&&\hspace*{-10mm}
\rho^z_{x,u}\circ \psi^z_{x,y,u}
=
(A_{z,u}\ot \Delta_{x,u})\circ (m_{z,y,u}\ot m_{x,y,u})
\circ (A_{z,y}\ot c_{A_{x,y},A_{y,u}}\ot A_{y,u})\\
&&\hspace*{5mm}\circ~ (A_{z,y}\ot A_{x,y}\ot \Delta_{y,u})\\
&=&
(m_{z,y,u}\ot m_{x,y,u}\ot m_{x,y,u})\circ (A_{z,y}\ot A_{y,u}\ot A_{x,y}\ot c_{A_{x,y},A_{y,u}}\ot A_{y,u})\\
&&\hspace*{5mm}
\circ~ (A_{z,y}\ot A_{y,u}\ot \Delta_{x,y}\ot \Delta_{y,u})\circ (A_{z,y}\ot c_{A_{x,y},A_{y,u}}\ot A_{y,u})\\
&&\hspace*{5mm}\circ~ (A_{z,y}\ot A_{x,y}\ot \Delta_{y,u})\\
&=&
(m_{z,y,u}\ot m_{x,y,u}\ot m_{x,y,u})\circ (A_{z,y}\ot A_{y,u}\ot A_{x,y}\ot c_{A_{x,y},A_{y,u}}\ot A_{y,u})\\
&&\hspace*{5mm}
\circ~ (A_{z,y}\ot c_{A_{x,y}\ot A_{x,y},A_{y,u}}\ot A_{y,u}\ot A_{y,u})\circ (A_{z,y}\ot \Delta_{x,y}\ot A_{y,u}\ot \Delta_{y,u})\\
&&\hspace*{5mm}\circ~ (A_{z,y}\ot A_{x,y}\ot \Delta_{y,u})\\
&=&
(m_{z,y,u}\ot m_{x,y,u}\ot m_{x,y,u})\circ (A_{z,y}\ot c_{A_{x,y},A_{y,u}}\ot c_{A_{x,y},A_{y,u}}\ot A_{y,u})\\
&&\hspace*{5mm}
\circ~ (A_{z,y}\ot A_{x,y}\ot c_{A_{x,y},A_{y,u}}\ot A_{y,u}\ot A_{y,u})\circ (A_{z,y}\ot \Delta_{x,y}\ot \Delta_{y,u}^2);\\
&&\hspace*{-10mm}
(\psi^z_{x,y,u}\ot m_{x,y,u})\circ (A_{z,y}\ot A_{x ,y}\ot c_{A_{x,y},A_{y,u}}\ot A_{y,u})\circ(\rho^z_{x,y}\ot\Delta_{y,u})\\
&=&
(m_{z,y,u}\ot m_{x,y,u}\ot m_{x,y,u})\circ (A_{z,y}\ot c_{A_{x,y},A_{y,u}}\ot A_{y,u}\ot A_{x,y}\ot A_{y,u})\\
&&\hspace*{5mm}
\circ~ (A_{z,y}\ot A_{x,y}\ot \Delta_{y,u}\ot A_{x,y}\ot A_{y,u})\\
&&\hspace*{5mm}
\circ~ (A_{z,y}\ot A_{x,y}\ot c_{A_{x,y},A_{y,u}}\ot A_{y,u})
\circ (A_{z,y}\ot \Delta_{x,y}\ot\Delta_{y,u})\\
&=&
(m_{z,y,u}\ot m_{x,y,u}\ot m_{x,y,u})\circ (A_{z,y}\ot c_{A_{x,y},A_{y,u}}\ot A_{y,u}\ot A_{x,y}\ot A_{y,u})\\
&&\hspace*{5mm}
\circ~ (A_{z,y}\ot A_{x,y}\ot c_{A_{x,y},A_{y,u}\ot A_{y,u}}\ot A_{y,u})
\\
&&\hspace*{5mm}
\circ~ (A_{z,y}\ot A_{x,y}\ot A_{x,y}\ot \Delta_{y,u}\ot A_{y,u})
\circ (A_{z,y}\ot \Delta_{x,y}\ot\Delta_{y,u})\\
&=&
(m_{z,y,u}\ot m_{x,y,u}\ot m_{x,y,u})\circ (A_{z,y}\ot c_{A_{x,y},A_{y,u}}\ot c_{A_{x,y},A_{y,u}}\ot A_{y,u})\\
&&\hspace*{5mm}
\circ~ (A_{z,y}\ot A_{x,y}\ot c_{A_{x,y},A_{y,u}}\ot A_{y,u}\ot A_{y,u})\circ (A_{z,y}\ot \Delta_{x,y}\ot \Delta_{y,u}^2).
\end{eqnarray*}
Consider the morphism $f=A_{z,x}\ot\eta_{x,x}:\ A_{z,x}\to A_{z,x}\ot A_{x,x}=M^z_{x,x}$. Since
\begin{eqnarray*}
\rho^z_{x,x}\circ f&=& (A_{z,x}\ot \Delta_{x,x})\circ (A_{z,x}\ot\eta_{x,x})=(A_{z,x}\ot\eta_{x,x}\ot \eta_x)\\
&=& (A_{z,x}\ot A_{x,x}\ot \eta_x)\circ (A_{z,x}\ot \eta_x)=(M^z_{x,x}\ot \eta_x)\circ f,
\end{eqnarray*}
there exists a unique $\tilde{f}:\ A_{z,x}\to M^{z{\rm co}A}_x$ such that $i\circ \tilde{f}=f$. $\tilde{f}$ is invertible,
with inverse $g=(A_{z,x}\ot \varepsilon_{x,x})\circ i$. Indeed,
$$g\circ \tilde{f}= (A_{z,x}\ot \varepsilon_{x,x})\circ f= (A_{z,x}\ot \varepsilon_{x,x})\circ (A_{z,x}\ot\eta_{x,x})=A_{z,x}.$$
We also have that
\begin{eqnarray*}
i\circ \tilde{f}\circ g&=& f\circ g= (A_{z,x}\ot\eta_{x,x})\circ (A_{z,x}\ot \varepsilon_{x,x})\circ i\\
&=& (A_{z,x}\ot\varepsilon_{x,x}\ot A_{x,x})\circ (A_{z,x}\ot A_{x,x}\ot \eta_x)\circ i\\
&=& (A_{z,x}\ot\varepsilon_{x,x}\ot A_{x,x})\circ (A_{z,x}\ot \Delta_{x,x})\circ i=i,
\end{eqnarray*}
and it follows from the uniqueness in the universal property of equalizers that $\tilde{f}\circ g=M^{z{\rm co}A}_x$.
We know by assumption that
$$\varepsilon^z_{x,y}=\psi^z_{x,x,y}\circ (i\ot A_{x,y}):\ M^{z{\rm co}A}_x\to  A_{x,y}\to M^z_{x,y}$$
is an isomorphism. It follows that
\begin{eqnarray*}
&&\hspace*{-15mm}
\varepsilon^z_{x,y}\circ (\tilde{f}\ot A_{x,y})
= (m_{z,x,y}\ot m_{x,x,y})\circ (A_{z,x}\ot c_{A_{x,x},A_{x,y}}\ot A_{x,y})\\
&&\hspace*{5mm}\circ~ (A_{z,x}\ot A_{x,x}\ot \Delta_{x,y}) \circ (i\ot A_{x,y})\circ (\tilde{f}\ot A_{x,y})\\
&=&
(m_{z,x,y}\ot m_{x,x,y})\circ (A_{z,x}\ot c_{A_{x,x},A_{x,y}}\ot A_{x,y})\\
&&\hspace*{5mm}\circ ~(A_{z,x}\ot A_{x,x}\ot \Delta_{x,y}) \circ (A_{x,x}\ot \eta_{x,x}\ot A_{x,y})\\
&=&(m_{z,x,y}\ot m_{x,x,y})\circ (A_{z,x}\ot c_{A_{x,x},A_{x,y}}\ot A_{x,y})\\
&&\hspace*{5mm}\circ~ (A_{x,x}\ot \eta_{x,x}\ot A_{x,y}\ot A_{x,y})\circ (A_{z,x}\ot \Delta_{x,y})\\
&=&(m_{z,x,y}\ot m_{x,x,y})\circ (A_{z,x}\ot A_{x,y}\ot \eta_x\ot A_{x,y})\circ (A_{z,x}\ot \Delta_{x,y})\\
&=& (m_{z,x,y}\ot A_{x,y})\circ (A_{z,x}\ot \Delta_{x,y})=\can^z_{x,y}
\end{eqnarray*}
is an isomorphism.\\

$\ul{(4)\Rightarrow (5)}$ is obvious.\\

$\ul{(5)\Rightarrow (1)}$.\\
We define the antipode as follows:
$$S_{x,y}=(A_{y,x}\ot\varepsilon_{x,y})\circ g_{x,y}\circ (\eta_y\ot A_{x,y}).$$
We have to show that the equations (\ref{eq:2.2.1}-\ref{eq:2.2.2}) are satisfied. To this end, we first need
some auxiliary formulas.
Composing the equality
\begin{eqnarray*}
&&\hspace*{-15mm}
(m_{x,y,y}\ot A_{x,y})\circ (A_{x,y}\ot \can^y_{x,y})\\
&=& (m_{x,y,y}\ot A_{x,y})\circ (A_{x,y}\ot m_{y,x,y}\ot A_{x,y})\circ (A_{x,y}\ot A_{y,x}\circ \Delta_{x,y})\\
&=& (m_{x,x,y}\ot A_{x,y})\circ (m_{x,y,x}\ot A_{x,y}\ot A_{x,y}) \circ (A_{x,y}\ot A_{y,x}\circ \Delta_{x,y})\\
&=& (m_{x,x,y}\ot A_{x,y})\circ (A_{x,x}\ot \Delta_{x,y})\circ (m_{x,y,x}\ot A_{x,y})\\
&=& \can^x_{x,y} \circ (m_{x,y,x}\ot A_{x,y})
\end{eqnarray*}
to the left with $f_{x,y}$ and to the right with $A_{x,y}\ot g_{x,y}$, we find that
\begin{equation}\eqlabel{9.2.4}
f_{x,y}\circ (m_{x,y,y}\ot A_{x,y})=(m_{x,y,x}\ot A_{x,y})\circ (A_{x,y}\ot g_{x,y}).
\end{equation}
Composing the equality
\begin{eqnarray*}
&&\hspace*{-15mm}
(\can^y_{x,y}\ot A_{x,y})\circ (A_{y,x}\ot \Delta_{x,y})\\
&=& 
(m_{y,x,y}\ot A_{x,y}\ot A_{x,y})\circ (A_{y,x}\ot \Delta_{x,y}\ot A_{x,y})\circ (A_{y,x}\ot \Delta_{x,y})\\
&=& 
(m_{y,x,y}\ot A_{x,y}\ot A_{x,y})\circ (A_{y,x}\ot A_{x,y}\ot \Delta_{x,y}) \circ (A_{y,x}\ot \Delta_{x,y})\\
&=&
(A_{y,y} \ot \Delta_{x,y}) \circ (m_{y,x,y}\ot A_{x,y}) \circ (A_{y,x}\ot \Delta_{x,y})\\
&=& 
(A_{y,y} \ot \Delta_{x,y}) \circ \can^y_{x,y}
\end{eqnarray*}
to the left and to the right with $g_{x,y}$, we find that
\begin{equation}\eqlabel{9.2.5}
(A_{y,x}\ot \Delta_{x,y}) \circ g_{x,y}= (g_{x,y}\ot A_{x,y})\circ (A_{y,y} \ot \Delta_{x,y}).
\end{equation}
\begin{eqnarray*}
&&\hspace*{-13mm}
\eta_x\circ\varepsilon_{x,y}=(A_{x,x}\ot \varepsilon_{x,y})\circ (\eta_x\ot A_{x,y})\\
&=& (A_{xx,}\ot \varepsilon_{x,y})\circ f_{x,y}\circ \can^x_{x,y} \circ (\eta_x\ot A_{x,y})\\
&=& (A_{xx,}\ot \varepsilon_{x,y})\circ f_{x,y}\circ (m_{x,x,y}\ot A_{x,y})\circ (A_{x,x}\ot \Delta_{x,y})  \circ (\eta_x\ot A_{x,y})\\
&=& (A_{xx,}\ot \varepsilon_{x,y})\circ f_{x,y}\circ (m_{x,x,y}\ot A_{x,y})\circ (\eta_x\ot A_{x,y}\ot A_{x,y})\circ \Delta_{x,y}\\
&=& (A_{xx,}\ot \varepsilon_{x,y})\circ f_{x,y}\circ (m_{x,x,y}\ot A_{x,y})\circ (A_{x,y}\ot \eta_y\ot A_{x,y})\circ \Delta_{x,y}\\
&\equal{\equref{9.2.4}}&
(A_{xx,}\ot \varepsilon_{x,y})\circ (m_{x,y,x}\ot A_{x,y})\circ (A_{x,y}\ot g_{x,y})\\
&&\hspace*{5mm}\circ~ (A_{x,y}\ot \eta_y\ot A_{x,y})\circ \Delta_{x,y}\\
&=& m_{x,y,x}\circ (A_{x,y}\ot S_{x,y})\circ \Delta_{x,y},
\end{eqnarray*}
and this shows that \equref{2.2.1} holds.
\begin{eqnarray*}
&&\hspace*{-13mm}
\eta_y\circ\varepsilon_{x,y}=(A_{y,y}\ot \varepsilon_{x,y})\circ (\eta_y\ot A_{x,y})\\
&=&
(A_{y,y}\ot \varepsilon_{x,y})\circ \can^y_{x,y}\circ g_{x,y}\circ (\eta_y\ot A_{x,y})\\
&=&
(A_{y,y}\ot \varepsilon_{x,y})\circ (m_{y,x,y}\ot A_{x,y})\circ (A_{y,x}\ot \Delta_{x,y})\circ g_{x,y}\circ (\eta_y\ot A_{x,y})\\
&=&
m_{y,x,y}\circ (A_{y,x}\ot A_{x,y}\ot \varepsilon_{x,y}) \circ (A_{y,x}\ot \Delta_{x,y})\circ g_{x,y}\circ (\eta_y\ot A_{x,y})\\
&=&
m_{y,x,y}\circ (A_{y,x}\ot \varepsilon_{x,y}\ot A_{x,y}) \circ (A_{y,x}\ot \Delta_{x,y})\circ g_{x,y}\circ (\eta_y\ot A_{x,y})\\
&\equal{\equref{9.2.5}}&
m_{y,x,y}\circ (A_{y,x}\ot \varepsilon_{x,y}\ot A_{x,y}) \circ (g_{x,y}\ot A_{x,y})\\
&&\hspace*{5mm}\circ (A_{y,y} \ot \Delta_{x,y})\circ (\eta_y\ot A_{x,y})\\
&=&
m_{y,x,y}\circ (A_{y,x}\ot \varepsilon_{x,y}\ot A_{x,y}) \circ (g_{x,y}\ot A_{x,y})\circ (\eta_y\ot A_{x,y}\ot A_{x,y})\circ \Delta_y\\
&=& m_{y,x,y}\circ~ (S_{x,y} \ot A_{x,y}) \circ \Delta_y,
\end{eqnarray*}
and this shows that \equref{2.2.2} holds.
\end{proof}

\begin{remarks}
1) The implication $\ul{(1)\Rightarrow (4)}$ can easily be proved directly: it is easily verified that
$$(\can^z_{x,y})^{-1}=(m_{z,y,x}\circ A_{x,y})\circ (A_{z,y}\ot S_{x,y}\ot A_{x,y})\circ (A_{z,y}\ot \Delta_{x,y}).$$
2) It follows from the Theorem that a Hopf module over a Hopf category is isomorphic to a free Hopf module,
that is a Hopf module in the image of the functor $G$. This result is known in the literature as the
Fundamental Theorem for Hopf modules. Its original form (in the case where $\Vv$ is de category of vector
spaces and $X$ is a singleton) it is due to Larson and Sweedler \cite{LS}, see also \cite[Theorem 1.1]{Sweedler}.
For the case where $\Vv$ is an arbitrary braided monoidal category with equalizers and $X$ is a singleton,
see \cite[Theorem 3.4]{Takeuchi} and \cite[Theorem 1.4]{Lyubashenko}.
\end{remarks}

Let us now proceed to some applications of the Fundamental Theorem. We restrict attention to the case where
$\Vv$ is the category $\Mm_k^{\rm f}$ of finitely generated projective modules over a commutative ring $k$
(or finite dimensional vector spaces over a field $k$). Our applications generalize applications of the classical
Fundamental Theorem as they can be found in \cite[Chapter 4]{Sweedler}.\\
For $\Vv=\Mm_k^{\rm f}$, the axioms (\ref{eq:2.2.1}-\ref{eq:2.2.2})
take the following form
\begin{equation}\eqlabel{2.3.1}
h_{(1)}S_{x,y}(h_{(2)})=\varepsilon_{x,y}(h)1_x~~;~~S_{x,y}(h_{(1)})h_{(2)}=\varepsilon_{x,y}(h)1_y,
\end{equation}
for all $x,y\in X$ and $h\in A_{x,y}$. The formula (\ref{eq:2.13.1}-\ref{eq:2.13.2}) can be written as
\begin{eqnarray}
&&S_{x,z}(hl)=S_{y,z}(l)S_{x,y}(h);\eqlabel{2.3.2}\\
&&\Delta_{y,x}(S_{x,y}(h))=S_{x,y}(h_{(2)})\ot S_{x,y}(h_{(1)}),\eqlabel{2.3.3}
\end{eqnarray}
for all $x,y,z\in X$, $h\in A_{x,y}$ and $l\in A_{y,z}$. The compatibility relation for Hopf modules
amounts to
\begin{equation}\eqlabel{7.1.1}
\rho_{x,z}(ma)=m_{[0]}a_{(1)}\ot m_{[1]}a_{(2)},
\end{equation}
for all $m\in M_{x,y}$ and $a\in A_{y,z}$. 

\begin{proposition}\prlabel{7.3}
Let $A$ be a Hopf category in $\Mm_k^{\rm f}(X)$. Then $A^*$ is a Hopf module,
with structure maps $\rho_{x,y}:\ A^*_{x,y}\to A^*_{x,y}\ot A_{x,y}$
and $\psi_{x,y,z}:\ A^*_{x,y}\ot A_{y,z}\to A^*_{x,z}$ defined as follows:
\begin{enumerate}
\item For $a^*\in A^*_{x,y}$,
$\rho_{x,y}(a^*)=\sum_i a^*a^*_i\ot a_i$,
where $\sum_i a^*_i\ot a_i\in A^*_{x,y}\ot A_{x,y}$ is the finite dual basis of $A_{x,y}$.
The multiplication on $A^*_{x,y}$ is the opposite convolution.
\item For $a^*\in A^*_{x,y}$ and $a\in A_{y,z}$, $\psi_{x,y,z}(a^*\ot a)=
a^*\rightact a\in A^*_{x,z}$ is given by the formula
$\lan a^*\rightact a, b\ran=\lan a^*,bS_{y,z}(a)\ran$, for all $b\in A_{y,z}$.
\end{enumerate}
\end{proposition}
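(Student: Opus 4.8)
The plan is to verify the three conditions defining a Hopf module over $A$: that $\psi$ is a right $A$-action, that $\rho$ is a right $A$-coaction, and that the compatibility condition \equref{7.1.1} holds. The first two are essentially formal, and the third carries the real content.

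I would begin with the coaction $\rho$. Unravelling the dual-basis formula, $\rho_{x,y}$ is nothing but the right $A_{x,y}$-comodule structure corresponding, under the standard equivalence for the finitely generated projective coalgebra $A_{x,y}$, to the regular right module of the (opposite-convolution) algebra $A^*_{x,y}$ on itself. Concretely, writing $\rho_{x,y}(a^*)=a^*_{[0]}\ot a^*_{[1]}$, the defining property $a^*_{[0]}\langle d^*,a^*_{[1]}\rangle=a^*d^*$ becomes, after pairing the first leg against $b\in A_{x,y}$ and using the product formula $\langle c^*d^*,b\rangle=\langle c^*,b_{(2)}\rangle\langle d^*,b_{(1)}\rangle$ on $A^*_{x,y}$, the identity
$$\langle a^*_{[0]},b\rangle\, a^*_{[1]}=\langle a^*,b_{(2)}\rangle\, b_{(1)}.$$
Coassociativity and counitality of $\rho$ follow from this (the counit of $A_{x,y}$ being the algebra unit $\varepsilon_{x,y}$ of $A^*_{x,y}$), and I will use this identity as the sole working description of $\rho$ below.

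For the action $\psi$, the unit axiom $a^*\rightact 1_y=a^*$ is immediate from $S_{y,y}(1_y)=1_y$ (the second part of \equref{2.3.2}). For associativity $(a^*\rightact a)\rightact b=a^*\rightact(ab)$, pairing against $c\in A_{x,u}$ reduces the two sides to $\langle a^*,(cS_{z,u}(b))S_{y,z}(a)\rangle$ and $\langle a^*,cS_{y,u}(ab)\rangle$, so the axiom is exactly the anti-multiplicativity $S_{y,u}(ab)=S_{z,u}(b)S_{y,z}(a)$ from \equref{2.3.2} together with associativity of $m$.

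The main obstacle is \equref{7.1.1}, namely $\rho_{x,z}(a^*\rightact a)=(a^*_{[0]}\rightact a_{(1)})\ot a^*_{[1]}a_{(2)}$. I would prove it by pairing the first tensor leg against an arbitrary $c\in A_{x,z}$ (which detects equality, since $A_{x,z}$ is finitely generated projective) and checking that both sides equal $\langle a^*,c_{(2)}S_{y,z}(a)\rangle\, c_{(1)}$. The left side collapses to this at once, via the identity for $\rho$ and the definition of $\rightact$. For the right side I would apply that same identity with $b=cS_{y,z}(a_{(1)})$, and then expand $\Delta_{x,y}(cS_{y,z}(a_{(1)}))$ using that $m$ is a coalgebra morphism (\equref{2.1.1}) and that $S$ is an anti-coalgebra morphism (\equref{2.3.3}, from \thref{2.3}); this produces the factor $S_{y,z}(a_{(2)})a_{(3)}$, which the antipode axiom \equref{2.3.1} turns into $\varepsilon_{y,z}(a_{(2)})1_z$, after which the counit law collapses $S_{y,z}(a_{(1)})\varepsilon_{y,z}(a_{(2)})$ to $S_{y,z}(a)$, yielding the required expression. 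The bookkeeping of the Sweedler indices through the coalgebra-map expansion of the product is the delicate point; everything else is routine.
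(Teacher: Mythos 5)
Your proposal is correct and follows essentially the same route as the paper: the coaction is identified with the comodule structure coming from the regular module of the opposite-convolution algebra $A^*_{x,y}$, the action axioms are reduced to \equref{2.3.2}, and the compatibility \equref{7.1.1} is verified by pairing against $c\in A_{x,z}$ and running through multiplicativity of $\Delta$, the anti-comultiplicativity of $S$ from \equref{2.3.3}, the antipode axiom \equref{2.3.1}, and the counit law --- exactly the chain of equalities in the paper's computation, merely phrased with the abstract identity $\lan a^*_{[0]},b\ran a^*_{[1]}=\lan a^*,b_{(2)}\ran b_{(1)}$ in place of explicit dual bases. No gaps.
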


\begin{proof}
The right $A$-coaction is obtained as follows: $A_{x,y}$ is a $k$-coalgebra,
hence $A_{x,y}^*$ is a $k$-algebra (with opposite convolution product).
It is therefore a right $A_{x,y}^*$-module, and a right $A_{x,y}$-comodule.
The coaction that is opbtained in this way is precisely the one that is described
in the Proposition.\\
Now let us show that the structure maps $\psi_{x,y,z}$ define a right $A$-module
structure on $A^*$.\\
\ul{Associativity}. For all $a^*\in A^*_{x,y}$, $a\in A_{y,z}$, $b\in A_{z,u}$ and
$c\in A_{x,u}$, we have that
\begin{eqnarray*}
\lan a^*\rightact (ab),c\ran&=&\lan a^*, c S_{y,u}(ab)\ran\equal{\equref{2.3.2}}
\lan a^*, cS_{z,u}(b)S_{y,z}(a)\ran\\
&=&\lan a^*\rightact a, cS_{z,u}(b)\ran=\lan (a^*\rightact a)\rightact b,c\ran.
\end{eqnarray*}
\ul{Unit property}.
For all $a^*\in A^*_{x,y}$ and $a\in A_{x,y}$, we have that
$$\lan a^*\rightact 1_y,a\ran=\lan a^*,aS_{y,y}(1_i)\ran \equal{\equref{2.3.2}}
\lan a^*,a\ran.$$
Now we verify the Hopf compatibility condition \equref{7.1.1}. We have to show
that
$$\rho_{x,z}(a^*\rightact a)=\sum_i (a^*a^*_i)\rightact a_{(1)}\ot a_i a_{(2)},$$
for all $a^*\in A^*_{x,y}$ and $a\in A_{y,z}$. Now
$$\rho_{x,z}(a^*\rightact a)=\sum_j (a^*\rightact a)b^*_j \ot b_j,$$
where $\sum_j b^*_j \ot b_j\in A^*_{x,z}\ot A_{x,z}$ is the dual basis of $A_{x,z}$,
so it suffices to show that
$$\sum_j \lan (a^*\rightact a)b^*_j, c\ran b_j=\sum_i \lan (a^*a^*_i)\rightact a_{(1)},c\ran a_i a_{(2)},$$
for all $c\in A_{x,z}$. This can be done as follows:
\begin{eqnarray*}
&&\hspace*{-2cm}
\sum_i \lan (a^*a^*_i)\rightact a_{(1)},c\ran a_i a_{(2)}
\equal{\equref{2.3.3}}
\sum_i \lan a^*,c_{(2)}S_{y,z}(a_{(1)})\ran \lan a_i^*, c_{(1)}S_{y,z}(a_{(2)})\ran a_ia_{(3)}\\
&=& \lan a^*,c_{(2)}S_{y,z}(a_{(1)})\ran c_{(1)}S_{y,z}(a_{(2)})a_{(3)}\\
&\equal{\equref{2.3.1}}&
\lan a^*,c_{(2)}S_{y,z}(a_{(1)})\ran c_{(1)} \varepsilon_{y,z}(a_{(2)})1_z
= \lan a^*,c_{(2)}S_{y,z}(a)\ran c_{(1)}\\
&=& \sum_j \lan a^*\rightact a, c_{(2)}\ran \lan b_j^*,c_{(1)}\ran b_j
= \sum_j \lan (a^*\rightact a)b^*_j, c\ran b_j.
\end{eqnarray*}
\end{proof}

We compute $A^{*{\rm co}A}$. Recall that $A_{x,x}$ is a Hopf algebra, for every $x\in X$,
and that
$$A^{*{\rm co}A}_x=(A^*_{x,x})^{{\rm co}A_{x,x}}=\int^l_{A^*_{x,x}}=
\{\varphi\in A^*_{x,x}~|~\varphi a^*=\lan a^*,1_x\ran \varphi,~{\rm for~all}~a^*\in A^*_{x,x}\},$$
the space of left integrals on $A_{x,x}$. From \thref{9.2} and \prref{7.3}, we obtain the
following result.

\begin{corollary}\colabel{7.4}
Let $A$ be a Hopf category in $\Mm_k^{\rm f}(X)$. For all $x,y\in X$, we have an isomorphism
$$\alpha_{x,y}=\varepsilon_{x,y}^{A^*}:\ \int^l_{A^*_{x,x}}\ot A_{x,y}\to A_{x,y}^*,~~
\varepsilon_{x,y}^{A^*}(\varphi\ot a)=\varphi\rightact a.$$
\end{corollary}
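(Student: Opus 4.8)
The plan is to obtain the statement directly by combining Proposition~\ref{pr:7.3} with the Fundamental Theorem. First I would invoke Proposition~\ref{pr:7.3} to regard $A^*\in\Mm_k^{\rm f}(X)$ as a Hopf module over $A$, with coaction $\rho_{x,y}(a^*)=\sum_i a^*a^*_i\ot a_i$ and right action $\psi_{x,y,z}(a^*\ot a)=a^*\rightact a$. Since $A$ is a Hopf category in $\Mm_k^{\rm f}(X)$, the implication $(1)\Rightarrow(2)$ of \thref{7.2} applies, so the pair $(F,G)$ of \prref{7.1} is a pair of inverse equivalences; in particular the morphism $\varepsilon^{M}\colon FG(M)\to M$ is an isomorphism for every Hopf module $M$. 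Taking $M=A^*$ shows that $\varepsilon^{A^*}$ is an isomorphism.

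Next I would identify the data entering $\varepsilon^{A^*}$. By definition $G(A^*)_x=(A^*_{x,x})^{{\rm co}A_{x,x}}$, and this coinvariant space was already computed in the paragraph preceding the corollary to be the space of left integrals $\int^l_{A^*_{x,x}}$; hence $FG(A^*)_{x,y}=\int^l_{A^*_{x,x}}\ot A_{x,y}$. Unwinding the formula $\varepsilon^{M}_{x,y}(m\ot h)=mh$ from \prref{7.1}, and using that the right $A$-action on $A^*$ is $a^*\rightact a$, the $(x,y)$-component reads $\varepsilon^{A^*}_{x,y}(\varphi\ot a)=\varphi\rightact a$, which is exactly the map $\alpha_{x,y}$ in the statement. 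Thus $\alpha_{x,y}$ is an isomorphism.

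There is essentially no obstacle beyond assembling the cited results; the only point that genuinely needs care is the identification of the coinvariants $(A^*_{x,x})^{{\rm co}A_{x,x}}$ with the left integrals $\int^l_{A^*_{x,x}}$. This rests on the classical fact that, for a finitely generated projective Hopf algebra, the coinvariants of the dual under the dual-basis coaction are precisely the left integrals, and it is legitimate here because each $A_{x,x}$ is a Hopf algebra in $\Mm_k^{\rm f}$. The finiteness hypothesis $A\in\Mm_k^{\rm f}(X)$ is also what makes the dual-basis coaction of \prref{7.3} well defined, so that $A^*$ is genuinely a Hopf module and the Fundamental Theorem can be applied.
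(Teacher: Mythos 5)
Your proposal is correct and follows essentially the same route as the paper: the authors likewise obtain the corollary by combining \prref{7.3} (making $A^*$ a Hopf module) with the implication $(1)\Rightarrow(2)$ of \thref{7.2}, after noting that $(A^*_{x,x})^{{\rm co}A_{x,x}}=\int^l_{A^*_{x,x}}$. Your additional remark on why the coinvariants are the left integrals is a point the paper simply asserts, so nothing is missing.
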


\begin{proposition}\prlabel{7.5}
Let $A$ be a Hopf category in $\Mm_k^{\rm f}(X)$. The antipode maps $S_{x,y}:\ A_{x,y}\to A_{y,x}$
are bijective, for all $x,y\in X$.
\end{proposition}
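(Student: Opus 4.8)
The plan is to derive bijectivity of each $S_{x,y}$ from the Fundamental Theorem in the guise of \coref{7.4}. Since a morphism between finitely generated projective $k$-modules is an isomorphism precisely when it becomes one after base change to the residue field $k/\mathfrak{m}$ at every maximal ideal $\mathfrak{m}$ of $k$, and since $-\ot_k (k/\mathfrak{m})$ is a strong monoidal functor $\Mm_k\to\Mm_{k/\mathfrak{m}}$ carrying the Hopf category $A$ to a Hopf category over the field $k/\mathfrak{m}$ with antipode components $S_{x,y}\ot_k(k/\mathfrak{m})$ (by \prref{2.6}), I may assume throughout that $k$ is a field and that every $A_{x,y}$ is finite-dimensional. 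This reduction is what lets me use dimension arguments freely.

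Over such a field I first determine the integral spaces. For $y=x$, \coref{7.4} supplies an isomorphism $\int^l_{A^*_{x,x}}\ot A_{x,x}\cong A^*_{x,x}$. As $\dim_k A^*_{x,x}=\dim_k A_{x,x}$ and $A_{x,x}$ is a Hopf algebra, hence nonzero (its counit $\varepsilon_{x,x}$ is split by $\eta_x$), comparing dimensions forces $\dim_k\int^l_{A^*_{x,x}}=1$. If some $A_{x,y}=0$ there is nothing to prove for that pair, so I assume all the modules involved are nonzero.

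Next I extract $S_{x,y}$ from the isomorphism of \coref{7.4}. Choose a nonzero $\varphi_x\in\int^l_{A^*_{x,x}}$; since this space is one-dimensional, $a\mapsto\varphi_x\ot a$ identifies $A_{x,y}$ with $\int^l_{A^*_{x,x}}\ot A_{x,y}$, so \coref{7.4} makes $a\mapsto\varphi_x\rightact a$ an isomorphism $A_{x,y}\to A^*_{x,y}$. By \prref{7.3}, $\lan\varphi_x\rightact a,b\ran=\lan\varphi_x,bS_{x,y}(a)\ran$ for $b\in A_{x,y}$, so this isomorphism factors as $A_{x,y}\xrightarrow{S_{x,y}}A_{y,x}\xrightarrow{\mu_x}A^*_{x,y}$, where $\mu_x(c)$ is the functional $b\mapsto\lan\varphi_x,bc\ran$. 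The composite $\mu_x\circ S_{x,y}$ is an isomorphism, hence injective, and therefore $S_{x,y}$ is injective for all $x,y$.

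It remains to upgrade injectivity to bijectivity, which is the only genuinely delicate point. Running the same argument with $x$ and $y$ interchanged shows $S_{y,x}\colon A_{y,x}\to A_{x,y}$ is injective too, so $\dim_k A_{x,y}\le\dim_k A_{y,x}\le\dim_k A_{x,y}$ and the two spaces have equal finite dimension; an injective linear map between finite-dimensional spaces of equal dimension is bijective. I expect this upgrade to be the step requiring the most care, and it is exactly to have the dimension count available — rather than to argue with integrals over the ring $k$, where $\int^l_{A^*_{x,x}}$ is merely an invertible $k$-module and need not be free — that I pass to residue fields at the very beginning.
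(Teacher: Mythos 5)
Your proof is correct, and it rests on the same pillar as the paper's proof, namely the isomorphism $\alpha_{x,y}$ of \coref{7.4}, but you organize the local-global step differently. The paper first proves injectivity of $S_{x,y}$ over an arbitrary commutative ring $k$, exploiting that $J=\int^l_{A^*_{x,x}}$ is an invertible module so that the evaluation $J^*\ot J\to k$ is an isomorphism; it then obtains surjectivity afterwards by localizing at each prime, reducing modulo the maximal ideal, invoking the field case, and applying Nakayama to the cokernel. You instead perform the reduction to residue fields once and for all at the outset, justified by the standard fact that a morphism of finitely generated projective modules is an isomorphism if and only if it is one modulo every maximal ideal, together with \prref{2.6} (base change is strong monoidal and preserves Hopf categories and their antipodes). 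This buys you the freedom to work with a genuinely one-dimensional integral space and a chosen basis vector $\varphi_x$, which makes the factorization $\varphi_x\rightact(-)=\mu_x\circ S_{x,y}$, and hence injectivity, transparent; the paper's map $\tilde{\alpha}_{x,y}$ accomplishes exactly the same thing without choosing a basis. Your final dimension count (injectivity of both $S_{x,y}$ and $S_{y,x}$ forces equal dimensions, hence bijectivity of each) is a slightly more direct version of the paper's argument via the composites $S_{x,y}\circ S_{y,x}$ and $S_{y,x}\circ S_{x,y}$. The trade-off is that the paper's route establishes the intermediate fact that $S_{x,y}$ is injective integrally, without any localization, while yours is shorter and avoids handling the invertible-but-possibly-nonfree module $\int^l_{A^*_{x,x}}$ altogether.

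One small remark: your aside that ``if some $A_{x,y}=0$ there is nothing to prove'' is not quite accurate, since one must still rule out $A_{x,y}=0$ with $A_{y,x}\neq 0$. But your own argument does this automatically: only $A_{x,x}\neq 0$ is needed to get $\dim\int^l_{A^*_{x,x}}=1$, and this always holds because $\varepsilon_{x,x}\circ\eta_x=\id_k$; the two injectivity statements then force $\dim A_{x,y}=\dim A_{y,x}$ in all cases, including the zero case. So the nonvanishing assumption can simply be dropped.
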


\begin{proof}
It is well-known (and it also follows from \coref{7.4}) that $J=\int^l_{A^*_{x,x}}$
is finitely generated projective of rank one as a $k$-module. Therefore the evaluation map
$$\ev:\ J^*\ot J\to k,~~\ev(p\ot \varphi)=p(\varphi)$$
is an isomorphism of $k$-modules. The isomorphism
$$\tilde{\alpha}_{x,y}=(J^*\ot \alpha)\circ (\ev^{-1}\ot A_{x,y}):\ A_{x,y}\to J^*\ot A_{x,y}$$
can be described explicitly as follows:
$$\tilde{\alpha}_{x,y}(a)=\sum_l p_l\ot \varphi_l\rightact a,$$
where $\ev^{-1}(1)=\sum_l p_l\ot \varphi_l$.\\
Now assume that $S_{x,y}(a)=0$, for some $a\in A_{x,y}$. For all $\varphi\in A^*_{x,x}$ and $b\in A_{x,y}$, we have
that
$$\lan \varphi\rightact a,b\ran=\lan \varphi,bS_{x,y}(a)\ran=0,$$
so it follows that $\tilde{\alpha}_{x,y}(a)=0$, and $a=0$, since $\tilde{\alpha}_{x,y}$ is
a bijection. This proves that $S_{x,y}$ is injective.\\
Now assume that $k$ is a field. The maps
$$\alpha=S_{x,y}\circ S_{y,x}~~{\rm and}~~\beta=S_{y,x}\circ S_{x,y}$$
are injective endomorphisms of the finite dimensional vector spaces $A_{y,x}$ and
$A_{x,y}$. From the dimension formulas, it follows that they are automorphisms. We then have
that
\begin{eqnarray*}
A_{y,x}&=& \alpha\circ \alpha^{-1}=S_{x,y}\circ S_{y,x}\circ \alpha^{-1};\\
A_{y,x}&=& \beta^{-1}\circ \beta= \beta^{-1}\circ S_{y,x}\circ S_{x,y}.
\end{eqnarray*}
This tells us that $S_{x,y}$ has a left inverse and a right inverse; these are necessarily equal,
hence $S_{x,y}$ is bijective.\\
Now consider the general case where $k$ is a commutative ring. The surjectivity of $S_{x,y}$ follows
from a local-global argument. Let $Q=\Coker(S_{x,y})$. For every prime ideal $p$ of $k$, we can
consider the localized Hopf category $A_p$, with $A_{p,x,y}=A_{x,y}\ot k_p$.
For every prime ideal $p$ of $k$,
$\Coker(S_{p,x,y})=Q_p$, since localization at a prime ideal is an
exact functor. Now the spaces $A_{p,x,y}/pA_{p,x,y}$ define a finite dimensional Hopf
category $A_p/pA_p$ over the field $k_p/pk_p$, and its antipode maps are bijective.
It follows from Nakayama's Lemma that the localized maps $S_{p,x,y}:\ A_{p,x,y}\to A_{p,y,x}$
are all bijective, and then it follows that $S_{x,y}$ is bijective.
\end{proof}

\end{document}